\newcommand{\FF}{{\mathbb{F}}}
\newcommand{\PP}{{\mathbb{P}}}
\newcommand{\fA}{{\mathfrak{A}}}
\newcommand{\fS}{{\mathfrak{S}}}
\newcommand{\bB}{{\mathbf{B}}}
\newcommand{\bG}{{\mathbf{G}}}
\newcommand{\bT}{{\mathbf{T}}}
\newcommand{\bU}{{\mathbf{U}}}
\newcommand{\cE}{{\mathcal{E}}}
\newcommand{\ad}{{\operatorname{ad}}}
\newcommand{\Aut}{{\operatorname{Aut}}}
\newcommand{\Irr}{{\operatorname{Irr}}}
\newcommand{\SC}{{\operatorname{sc}}}
\newcommand{\tr}{{\operatorname{tr}}}
\newcommand{\rk}{{\operatorname{rk}}}
\newcommand{\PGL}{{\operatorname{PGL}}}
\newcommand{\PSL}{{\operatorname{L}}}
\newcommand{\SL}{{\operatorname{SL}}}
\newcommand{\GL}{{\operatorname{GL}}}
\newcommand{\PSU}{{\operatorname{U}}}
\newcommand{\GU}{{\operatorname{GU}}}
\newcommand{\SU}{{\operatorname{SU}}}
\newcommand{\Sp}{{\operatorname{Sp}}}
\newcommand{\Spin}{{\operatorname{Spin}}}
\newcommand{\PSp}{{\operatorname{S}}}
\newcommand{\OO}{{\operatorname{O}}}
\newcommand{\SO}{{\operatorname{SO}}}
\newcommand{\half}{{\frac{1}{2}}}
\newcommand{\tw}[1]{{}^#1\!}
\def\skipa{\vspace{-1.5mm} & \vspace{-1.5mm} & \vspace{-1.5mm}\\}
\renewcommand{\atop}[2]{\genfrac{}{}{0pt}{}{#1}{#2}}
\let\eps=\epsilon
\let\co=\colon
\let\opl=\oplus
\newtheorem{thm}{Theorem}[section]
\newtheorem{lem}[thm]{Lemma}
\newtheorem{cor}[thm]{Corollary}
\newtheorem{prop}[thm]{Proposition}
\theoremstyle{definition}
\newtheorem{defn}[thm]{Definition}
\theoremstyle{remark}
\newtheorem{rem}[thm]{Remark}
\begin{document}

\title{Simple groups admit Beauville structures}

\date{\today}

\author{Robert Guralnick}
\address{3620 S. Vermont Ave, Department of Mathematics, University of
Southern California, Los Angeles, CA 90089-2532, USA.}
\makeatletter
\email{guralnic@usc.edu}
\makeatother
\author{Gunter Malle}
\address{FB Mathematik, TU Kaiserslautern,
Postfach 3049, 67653 Kaisers\-lautern, Germany.}
\makeatletter
\email{malle@mathematik.uni-kl.de}
\makeatother

\thanks{The first author was partially supported by  NSF grants
  DMS 0653873 and 1001962.}

\begin{abstract}
We answer a conjecture of Bauer, Catanese and Grunewald showing that
all finite simple groups other than the alternating group of degree $5$ admit
unmixed Beauville structures. We also consider an analog of the result
for simple algebraic groups which depends on some upper bounds for
character values of regular semisimple elements in finite groups of Lie type.
Finally, we prove that any finite simple group contains two conjugacy classes
$C, D$ such that any pair of elements in $C \times D$ generates the group.
\end{abstract}

\dedicatory{Dedicated to the memory of Fritz Grunewald}

\maketitle

%%\pagestyle{myheadings}
%%\markboth{for personal use only}{}
%%\markboth{}{}

%%%%%%%%%%%%%%%%%%%%%%%%%%%%%%%%%%%%%%%%%%%%% 
\section{Introduction} \label{sec:intro}

A Beauville surface is a compact complex surface $S$ which is rigid (i.e.,
it has no non-trivial  deformations) and satisfies
$S =  (X \times Y )/G $ where  $X$ and $Y$ are curves of genus at least $2$
and $G$ is a finite group acting freely on $X \times Y$.   See \cite{BCG06}
for more background on the history and importance of  Beauville surfaces.

A finite group $G$ is said to admit an \emph{unmixed Beauville structure} if
there exist two pairs of generators $(x_i, y_i)$, $i =1,2$, for $G$ such
that $\Sigma(x_1, y_1) \cap \Sigma(x_2,y_2) = \{1\}$, where for $x,y \in G$
we set
$$\Sigma(x,y)=\bigcup_{i\ge0,\,g\in G}\{gx^ig^{-1},gy^ig^{-1},g(xy)^ig^{-1} \}.
$$
In particular, if there are two generating pairs $(x_i,y_i)$ such that
the orders of $x_1, y_1$ and $x_1y_1$ are relatively coprime to those of
$x_2,y_2$ and $x_2y_2$, then $G$ admits an unmixed Beauville structure.
By the Riemann existence theorem, each generating pair $(x_i, y_i)$ of $G$
gives rise to a Galois action of $G$ on a curve $X_i$ such that
$X_i\rightarrow X_i/G \cong \PP^1$ is branched at $3$ points. The condition
that $\Sigma(x_1,y_1)\cap\Sigma(x_2,y_2)=\{1\}$
is precisely the condition that the action of $G$ on $X_1\times X_2$ is free. 
 
%%% real structure (at least orders are distinct) does there exist an inner
%%%  automorphism inverting x_i, y_i, we are not really addressing this 

Our first main result completes a program announced and started in \cite{GM10}
and answers a conjecture of Bauer--Catanese--Grunewald \cite{BCG05,BCG06}
regarding Beauville structures. See also \cite{FJ} for some low rank cases.

\begin{thm}   \label{thm:beau}
 Let $G$ be a finite non-abelian simple group other than $\fA_5$. Then $G$
 admits an unmixed Beauville structure.
\end{thm}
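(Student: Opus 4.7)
The plan is to proceed by the classification of finite simple groups. The alternating groups $\fA_n$ for $n\ge6$ admit an unmixed Beauville structure by a direct construction: one picks two generating pairs built from products of cycles whose element orders have disjoint prime supports, fulfilling the coprimality sufficient condition stated in the introduction. The twenty-six sporadic groups (and the Tits group) can be handled by explicit computation from the \textsc{Atlas} character tables, checking positivity of the relevant Frobenius structure constants for candidate class triples. Hence the bulk of the work, continuing the program of \cite{GM10}, concerns the finite simple groups of Lie type.

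For a simple group $G = G(q)$ of Lie type, the strategy is to produce two generating triples $(x_i,y_i,z_i)$ with $x_iy_iz_i=1$ for $i=1,2$, such that the element orders in the first triple are coprime to those in the second. The natural candidates are regular semisimple elements drawn from maximal tori $\bT^F$ whose orders involve disjoint sets of cyclotomic factors $\Phi_d(q)$; Coxeter tori paired with tori of complementary $d$-value are a standard choice. A Zsigmondy-type argument provides, in most cases, primitive prime divisors $\ell_d \mid \Phi_d(q)$, and the presence of such a prime $\ell_d$ in the order of a chosen generator forces any overgroup to contain the ambient torus, severely restricting the possible proper overgroups.

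To show existence of triples with prescribed classes $(C_1,C_2,C_3)$ and $xyz=1$, I would apply Frobenius's formula
$$\frac{|C_1|\,|C_2|}{|G|}\sum_{\chi\in\Irr(G)}\frac{\chi(x)\chi(y)\overline{\chi(z)}}{\chi(1)}$$
and show it is strictly positive. For regular semisimple $x$ and $y$ most irreducible characters vanish, and the sum is dominated by the Steinberg character together with a few unipotent characters; uniform bounds of the shape $|\chi(g)| \le C\,\chi(1)^\alpha$ with $\alpha<1$ for regular semisimple $g$ (the character-value estimates alluded to in the abstract) then control the residual contributions. A Möbius-type count over maximal overgroups, combined with the Zsigmondy primes from the previous step, will then show that a positive proportion of such triples actually generates $G$ rather than a proper subgroup.

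The main obstacle will be the groups of small Lie rank over small fields --- $\PSL_2(q)$, $\PSU_3(q)$, the Suzuki and small Ree groups, and analogous exceptional configurations --- where cyclotomic polynomials coincide or are too small, character-value bounds degrade, and Zsigmondy exceptions intervene. Each such family will likely require a separate, hands-on treatment, either by explicit matrix construction of generating pairs or by appeal to the low-rank results of \cite{FJ}. The exclusion of $\fA_5 \cong \PSL_2(4)$ from the statement reflects a genuine obstruction: its element orders $\{1,2,3,5\}$ do not admit two disjoint triples of pairwise coprime orders, so no coprime Beauville structure is available, and any argument must rule $\fA_5$ out at the outset.
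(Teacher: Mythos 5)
Your proposal follows essentially the same route as the paper: reduction via the classification, with alternating groups and sporadic groups handled by known results respectively \textsc{Atlas} computations, and for the groups of Lie type two generating triples of (mostly regular semisimple) elements of coprime orders taken from maximal tori with disjoint cyclotomic factors, existence of triples with product $1$ coming from the character formula and generation from Zsigmondy primes together with a maximal-subgroup analysis. The only notable divergences are that the paper secures nonvanishing of the structure constants for the classical groups chiefly by Gow's theorem (two regular semisimple classes and one semisimple class) rather than by your direct character-value estimates, and that it cites Fuertes--Gonz\'alez-Diez for $\fA_n$ instead of reconstructing those cases.
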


The proof will be given in Sections~\ref{sec:exc}--\ref{sec:altspor}.

See Garion--Larsen--Lubotzky \cite{GLL} for a proof that
this holds for all sufficiently large simple groups of Lie type, and
Fuertes--Gonz\'alez-Diez \cite[Thm.~1]{FG} for the alternating groups.
Our proof is independent of the result in \cite{GLL}. The statement for
alternating groups also follows easily by \cite{Be72}.
\vskip 1pc

In the proof, the well-known character formula for structure constants in
finite groups will be essential:
Let $G$ be a finite group, $C_1,C_2,C_3$ conjugacy classes of $G$. Then for
fixed $x\in C_1$ the number of pairs
$$n(C_1,C_2,C_3):=|\{(y,z)\in C_2\times C_3\mid xyz=1\}|$$
in $G$ is given by the character formula
$$n(C_1,C_2,C_3)=\frac{|C_2|\cdot|C_3|}{|G|}
  \sum_{\chi\in\Irr(G)}\frac{\chi(C_1)\chi(C_2)\chi(C_3)}{\chi(1)},$$
where the sum ranges over the complex irreducible characters of $G$ and
$\chi(C_i)$ denotes the value of $\chi$ on elements of $C_i$.
%%(see for example \cite[Thm.~I.5.8]{MM}).

If $G$ is simple of Lie type, the first two conjugacy classes contain regular
semisimple elements and the third class non-identity semisimple elements,
then the structure constant is always non-zero by the nice result of
Gow \cite[Thm.~2]{Gow}.

We will also use several results about maximal subgroups containing special
elements (mostly based on Guralnick--Penttila--Praeger--Saxl \cite{GPPS}).
We also obtain some new results that may be of independent interest about
maximal subgroups of the exceptional groups, see Theorem~\ref{thm:excelt}. 
In general, the idea of the proof is fairly simple. We find three conjugacy
classes $C_i$ of our simple group such that there are no
maximal subgroups intersecting all three classes (or at least very few). 

These ideas allow us to prove some related results.  The first is:

\begin{thm}   \label{thm:alggroup triples}
 If $G$ is a simply connected simple algebraic group  of rank $r > 1$ over
 an algebraically closed field , and $C_i$, $1 \le i \le 3$, are conjugacy
 classes of regular semisimple elements of $G$, then the variety
 $\{(x_1, x_2, x_3)\mid x_i\in C_i,\,x_1x_2x_3=1\}$  is irreducible of
 dimension $2 \dim G  -3r$.
\end{thm}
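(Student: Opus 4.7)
The plan is to identify $V$ with the scheme-theoretic preimage $\mu^{-1}(C_3^{-1})$ of the homogeneous space $C_3^{-1}\subset G$ under the multiplication morphism $\mu\colon C_1\times C_2\to G$, $(x_1,x_2)\mapsto x_1x_2$, via $(x_1,x_2,x_3)\leftrightarrow(x_1,x_2)$ with $x_3=(x_1x_2)^{-1}$. Since a regular semisimple class has centraliser a maximal torus, $\dim C_i=\dim G-r$, and the expected dimension of $V$ is $3(\dim G-r)-\dim G=2\dim G-3r$. The proof proceeds in two steps: (i) show that $\mu$ is dominant with fibres of dimension $\dim G-2r$ over the regular semisimple locus, yielding $\dim V=2\dim G-3r$; (ii) show these fibres are irreducible, whence $V\to C_3^{-1}$ is a surjection with irreducible fibres of constant dimension over the irreducible base $C_3^{-1}$ and $V$ itself is irreducible.

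\emph{Step (i): dominance and fibre dimension.} Dominance follows from an algebraic-group incarnation of the Gow theorem cited in the introduction: for all sufficiently large $q$ one has $C_1(\FF_q)C_2(\FF_q)\cap C_3^{-1}(\FF_q)\ne\emptyset$ by the finite statement, so passing to Zariski closures shows that the image of $\mu$ contains the entire regular semisimple locus of $G$. The generic fibre then has dimension $2\dim G-2r-\dim G=\dim G-2r$, and $G$-equivariance of $\mu$ (diagonal conjugation on the source, conjugation on the target) makes the fibre dimension constant along conjugacy classes. Hence every regular semisimple fibre has dimension exactly $\dim G-2r$, giving the dimension assertion.

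\emph{Step (ii): irreducibility of fibres.} Fix $s\in C_3^{-1}$ and set $T=C_G(s)$. Choosing $t_1\in C_1$ with $C_G(t_1)=T_1$ and writing $x_1=gt_1g^{-1}$, we identify
$$\mu^{-1}(s)\cong\{gT_1\in G/T_1\mid gt_1^{-1}g^{-1}s\in C_2\},$$
a locally closed subvariety of $G/T_1$ carrying a free conjugation action of $T$. To prove irreducibility, the cleanest route is a Lang--Weil count over $\FF_q$: the character formula displayed in the introduction, together with uniform upper bounds for $|\chi(s)|$ at regular semisimple $s$ and $\chi\in\Irr(G(\FF_q))$ of the kind that form the technical backbone of the paper (cf.\ Theorem~\ref{thm:excelt}), yield
$$|\mu^{-1}(s)(\FF_q)|=q^{\dim G-2r}\bigl(1+O(q^{-1/2})\bigr)\quad\text{as }q\to\infty,$$
which by Lang--Weil forces geometric irreducibility.

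\emph{Main obstacle.} The dimension half of the argument is essentially formal; the crux is the fibre irreducibility in Step~(ii). Either one produces the sharp regular-semisimple character bounds that make the Lang--Weil route work, or one pursues a purely geometric analysis (via the Bruhat decomposition relative to a Borel containing $T$) of how the translate $C_1^{-1}s$ meets $C_2$, excluding spurious components. In either case, the restriction $r>1$ is likely needed to avoid rank-one degeneracies (e.g.\ the Weyl-group action on the torus breaking the fibre into pieces).
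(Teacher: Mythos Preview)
Your Step~(i) contains a genuine gap. From dominance of $\mu$ you correctly deduce that the \emph{generic} fibre of $\mu\colon C_1\times C_2\to G$ has dimension $\dim G-2r$, and $G$-equivariance does make the fibre dimension constant along each individual conjugacy class. But the regular semisimple locus is a union of infinitely many conjugacy classes, and upper semicontinuity only guarantees that the classes with generic-sized fibre fill out a dense open subset; nothing forces the particular class $C_3^{-1}$ to lie in that subset. So Step~(i) as written does not establish $\dim V=2\dim G-3r$; at best it gives $\dim V\ge 2\dim G-3r$ once nonemptiness is known.

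Your Step~(ii), if executed, would in fact give both the dimension and the irreducibility of the fibre simultaneously via Lang--Weil, rendering Step~(i) redundant; and this is essentially the paper's own argument, applied to the fibre $\mu^{-1}(s)$ rather than directly to $V$. The paper counts $|V(\FF_q)|$ using the character formula, the uniform bound $|\chi(s)|\le|W|$ at regular semisimple $s$ (this is Theorem~\ref{thm:bound-dis}; your reference to Theorem~\ref{thm:excelt} is to the wrong result, which concerns overgroups of tori), and the estimate $\sum_{\chi\ne1}\chi(1)^{-1}=O(q^{-1/2})$ valid because $r>1$, obtaining $|V(\FF_q)|=q^{2\dim G-3r}(1+O(q^{-1/2}))$. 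Lang--Weil then yields a unique top-dimensional component, and a separate intersection-theoretic lower bound (every component of $V=(C_1\times C_2\times C_3)\cap\{x_1x_2x_3=1\}$ in $G^3$ has dimension at least $2\dim G-3r$) rules out lower-dimensional components. Working with $V$ directly is marginally cleaner than your fibrewise version: you avoid the extra step of passing from irreducible equidimensional fibres over an irreducible base to irreducibility of the total space (true here because the map $V\to C_3^{-1}$ is $G$-equivariant over a homogeneous base, but still an extra argument). Two further ingredients you have not mentioned: simple connectedness is used to ensure that $C_i(\FF_q)$ is a single $G(\FF_q)$-class (centralisers of semisimple elements are connected), so that the finite character formula really computes $|V(\FF_q)|$; and the passage from $\overline{\FF_p}$ to an arbitrary algebraically closed field requires a spreading-out argument, reducing modulo a generic maximal ideal of a finitely generated ring of definition.
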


We prove the result over the algebraic closure of a finite field by using our
results on the finite groups of Lie type.  From this, the theorem follows
by a simple argument (as pointed out to us by Michael Larsen). See
Theorem~\ref{thm:dimtriples} and Remark~\ref{rem:larsen}.
For  the exceptional groups, we prove the analogous
result with $C_3$ an almost arbitrary  class. See
Theorem \ref{thm:moretriples} for the precise statement. 

The ideas used in the proof also allow us to show (see
Section~\ref{sec:genclass}):

\begin{thm}   \label{thm:gen classes}
 Let $G$ be a finite simple group. There exist conjugacy classes $C$ and $D$
 of $G$ such that $G = \langle c, d \rangle$ for any $c \in C$ and $d\in D$.
\end{thm}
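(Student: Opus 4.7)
Since every proper subgroup of $G$ lies in a maximal subgroup, the theorem is equivalent to the statement that there exist conjugacy classes $C,D$ such that no maximal subgroup $M<G$ meets both. The approach is case analysis along the classification of finite simple groups; in each family I choose $C$ and $D$ whose element orders (or permutation structure) so sharply constrain the possible overgroups that no single maximal subgroup can contain elements of both classes.

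For $G$ of Lie type, I pick $C$ and $D$ to consist of regular semisimple elements in two distinct ``large'' maximal tori $T_1,T_2$. Concretely, select two distinct integers $e_1\ne e_2$ such that $\Phi_{e_i}(q)$ divides $|G|$ with $e_i$ large (for example the Coxeter number together with a second large value), and let $C$ and $D$ contain elements of order divisible by primitive prime divisors $r_i$ of $\Phi_{e_i}(q)$. The theorem of Guralnick--Penttila--Praeger--Saxl \cite{GPPS} for classical groups, and Theorem~\ref{thm:excelt} of the present paper for exceptional groups, show that the maximal subgroups containing an $r_i$-element form a very restricted list, essentially the normalizer of $T_i$ plus a short list of subfield and embedded-Lie-type exceptions. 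Comparing these restricted lists for $e_1$ and $e_2$ shows them to be disjoint in all but a finite collection of rank and field combinations. For alternating groups $\fA_n$ with $n\ge 7$, Bertrand's postulate produces a prime $p$ with $n/2<p\le n-3$; taking $C$ to be the class of a $p$-cycle, Jordan's theorem implies that any primitive subgroup of $\fS_n$ meeting $C$ already contains $\fA_n$, so for a proper $\langle c,d\rangle$ the subgroup is intransitive or imprimitive. A suitable choice of $D$, namely a class whose cycle type forces transitivity and is incompatible with any block system containing the support of $c$, rules out both. The small cases $n\in\{5,6\}$, and the 26 sporadic groups (plus the Tits group), are dispatched directly from the lists of maximal subgroups: typically two classes whose orders are the two largest primes dividing $|G|$ already satisfy the required condition.

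The main obstacle is the low-rank, small-field residue of the Lie-type case: either primitive prime divisors of $\Phi_{e_i}(q)$ may fail to exist (so the $r_i$-strategy collapses), or the exceptional lines in the \cite{GPPS} tables create spurious common overgroups that need to be removed. These cases have to be treated individually with bespoke choices of $C,D$ adapted to the explicit maximal subgroup structure, for example by fine-tuning within the normalizer of $T_i$ or by switching to a different torus. A smaller ad hoc component arises for alternating groups of small degree, where the $p$-cycle has support too large for Jordan's theorem to close the argument by itself and specific cycle types must be paired by hand.
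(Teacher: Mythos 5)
Your proposal follows essentially the same route as the paper: reduce to finding two classes no maximal subgroup meets simultaneously, then use regular semisimple elements in two well-chosen tori with Zsigmondy/primitive prime divisors controlled by \cite{GPPS} and Theorem~\ref{thm:excelt} for Lie type, cycle-type arguments via Jordan/Williamson and Bertrand for alternating groups, the maximal-subgroup lists for sporadic groups, and ad hoc treatment of the small-rank/small-field residue. The only point to tighten is your choice of $D$ in the alternating case: the obstruction from a $p$-cycle $c$ with $n/2<p\le n-3$ comes from set stabilizers of subsets of the fixed-point set of $c$ (not block systems on its support), so $D$ should consist of elements all of whose orbits have length exceeding $n-p$ --- which is exactly the role played by the fixed-point-free products of two long coprime cycles in the paper's Proposition~\ref{prop:altgen}.
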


In \cite[Thm. A]{DHP}, finite solvable groups were characterized by the
property that for any pair of conjugacy classes $C,D$, there exist
$(c,d) \in C \times D$ with $\langle c, d \rangle$ solvable.
Using a variation of the  reduction to almost simple groups in \cite{DHP} and 
a slight generalization of the previous result characterizes any family of
finite groups closed under subgroups, quotients and extensions in a similar
fashion. More precisely, one has the following result (see also
\cite[Thm.~C]{DGHP}):

\begin{cor} \label{classes}
 Let $X$ be a family of finite groups closed under subgroups, quotients and
 extensions. A finite group $G$ belongs to $X$ if and only if for every
 $x,y \in G$, $\langle x, y^g \rangle \in X$ for some $g \in G$.
\end{cor}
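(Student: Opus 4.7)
The forward direction is immediate: if $G \in X$, then $\langle x, y^g\rangle \le G$ lies in $X$ by closure under subgroups, so the condition is trivially satisfied. For the converse I would argue the contrapositive by induction on $|G|$: whenever $G \notin X$, produce $x, y \in G$ with $\langle x, y^g\rangle \notin X$ for every $g \in G$.

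Suppose first that some nontrivial normal subgroup $N \trianglelefteq G$ satisfies $G/N \notin X$. By the inductive hypothesis there exist $\bar x, \bar y \in G/N$ with $\langle \bar x, \bar y^{\bar g}\rangle \notin X$ for every $\bar g$; lifting to $x, y \in G$, the group $\langle x, y^g\rangle$ surjects onto $\langle \bar x, \bar y^{\bar g}\rangle$, and closure under quotients forces $\langle x, y^g\rangle \notin X$. Therefore I may assume every proper quotient of $G$ lies in $X$. Let $M$ be a minimal normal subgroup of $G$; since $G/M \in X$, closure under extensions forces $M \notin X$ (else $G \in X$, against our hypothesis). Writing $M \cong S^k$ for a simple group $S$, the composition series of $M$ has all factors isomorphic to $S$, so closure under extensions also yields $S \notin X$.

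If $S \cong \ZZ/p$ is abelian, take any $1 \ne x \in M$ and $y = 1$; then $\langle x, y^g\rangle = \langle x\rangle \cong \ZZ/p \notin X$ for every $g$, as required. If $S$ is nonabelian simple, I would appeal to an $\Aut(S)$-equivariant sharpening of Theorem~\ref{thm:gen classes} yielding subsets $C, D \subseteq S$ stable under all of $\Aut(S)$ such that $\langle c, d\rangle = S$ for every $c \in C, d \in D$. Fix $c \in C$ and $d \in D$, and let $x = (c, \ldots, c)$, $y = (d, \ldots, d) \in M$ be the diagonal elements. Since $\Aut(M) \cong \Aut(S) \wr \fS_k$, conjugation by any $g \in G$ permutes the simple factors $S_1, \ldots, S_k$ of $M$ and acts on each factor by an element of $\Aut(S)$; by the $\Aut(S)$-invariance of $D$, every coordinate of $y^g$ still lies in $D$. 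Hence the projection of $\langle x, y^g\rangle$ onto each factor $S_j$ contains an element of $C$ and an element of $D$, so by the generation property it equals $S_j$. Thus $\langle x, y^g\rangle$ is a subdirect product of $S^k$, and a standard application of Goursat's lemma identifies it with $S^m$ for some $m \ge 1$. In particular it contains a copy of $S$, so closure of $X$ under subgroups together with $S \notin X$ yields $\langle x, y^g\rangle \notin X$, completing the argument.

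The main obstacle is the $\Aut(S)$-equivariant sharpening of Theorem~\ref{thm:gen classes}: one needs the classes $C, D$ to be stable under all outer automorphisms, not merely under inner ones, so that conjugation by an arbitrary $g \in G$ preserves the generating property on every coordinate of $M$. This strengthening is to be obtained either by arranging the character-theoretic argument for Theorem~\ref{thm:gen classes} to work uniformly over each $\Aut(S)$-orbit in the chosen $S$-classes, or by replacing the classes there with their $\Aut(S)$-saturations and re-verifying generation; once that is in hand, the minimal-counterexample reduction and Goursat analysis above are routine.
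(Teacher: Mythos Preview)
Your reduction is the right shape and matches the paper's strategy: reduce to $G$ with a unique minimal normal subgroup $M \cong S^k$ and $S \notin X$, then invoke a generation result for $S$ to manufacture $x, y$ with $\langle x, y^g\rangle$ containing a copy of $S$ for every $g$. The paper proceeds the same way, deferring the details as a ``straightforward reduction to the almost simple case'' at the end of Section~\ref{sec:genclass}.

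The genuine gap is in your final paragraph. The $\Aut(S)$-invariant strengthening of Theorem~\ref{thm:gen classes} that you require is \emph{false} for $S = \OO_8^+(2)$: in the proof of Proposition~\ref{prop:genO+} the paper records a computer check that there are no $\Aut(S)$-invariant subsets $C, D \subset S$ with every pair from $C \times D$ generating. The triality automorphism fuses the natural candidate classes (e.g.\ the three $S$-classes of elements of order~$15$), and pairs drawn from a single such class can lie together in a maximal subgroup $(\fA_5\times\fA_5).2^2$. So neither of your proposed fixes---re-running the argument over $\Aut(S)$-orbits, or passing to $\Aut(S)$-saturations and re-verifying generation---can succeed here; the same obstruction is expected for $\OO_8^+(3)$ (cf.\ Corollary~\ref{cor:strong gen classes}).

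What the paper actually feeds into the reduction is Theorem~\ref{thm:strong gen classes}: for every almost simple $A$ with socle $S$ there are $A$-conjugacy classes $C, D$, with $D \subset S$ always but $C$ permitted to lie outside $S$, such that $\langle c, d\rangle \ge S$ for all $(c,d)\in C \times D$. In the triality cases one takes $C$ to be a class of outer elements of $3$-power order; then $\langle c, d\rangle$ contains both $d$ and $d^c$, which land in distinct $S$-classes of elements of order~$15$ (resp.\ $20$) and hence already generate $S$. Plugging this into your framework forces you to choose $x$ outside $M$, lifting an element of $C$ from the relevant almost simple quotient $N_G(S_1)/C_G(S_1)$, rather than taking a diagonal element of $M$; with that adjustment the Goursat analysis goes through as you outlined.
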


The paper is organized as follows. In the next three sections, 
we prove Theorem \ref{thm:beau} for exceptional groups, classical
groups and sporadic groups. 

In Section \ref{sec:bound}, we prove Theorem \ref{thm:bound-dis} which gives
an upper bound for the absolute value of character values on semisimple
elements in groups of Lie type (including disconnected groups --- see
\cite[Thm.~3]{GLL} for a different proof with a larger bound). We then
use this result to study the structure of the variety of triples of elements
in three semisimple regular classes with product 1.

In the final section, we prove a slightly more general version of 
Theorem~\ref{thm:gen classes} that allows one to obtain Corollary \ref{classes}.
\\

\noindent
{\bf Acknowledgement}.  We wish to thank M. Larsen and T. Springer
for various remarks about the results of this paper.  In particular, their
comments allowed us to extend the proof of Theorem \ref{thm:alggroup triples}
to  all semisimple regular classes and to characteristic $0$ and to prove
the irreducibility of the variety. 
\vskip 1pc

\noindent
{\bf Remark.}
 After this paper was completed and posted, we were informed by Fairbairn,
 Magaard and Parker that they also proved Theorem~\ref{thm:beau} using similar
 methods (see \cite{FMP}). Also, Kantor, Lubotzky and Shalev have just sent
 us a preprint \cite{KLS} including a proof of Theorem  \ref{thm:gen classes}.

%%%%%%%%%%%%%%%%%%%%%%%%%%%%%%%%%%%%%%%%%%%%%%%%%%%%%%%%%%%%%%%%%%%%%%%%%
\section{Exceptional groups}   \label{sec:exc}

Here, we show Theorem~\ref{thm:beau} for the exceptional groups of Lie type;
the Tits group $\tw2F_4(2)'$ will be considered in Proposition~\ref{prop:spor}.
We also exclude the solvable group $\tw2B_2(2)$ and the non-simple groups with
classical socle $G_2(2)\cong\PSU_3(3).2$ and $\tw2G_2(3)\cong\PSL_2(8).3$
throughout this section.

First we prove a result on overgroups of certain maximal tori, which may be
of independent interest. In the case of $E_7(q)$ this is due to Weigel
\cite[4(i)]{We92}.

\begin{thm}   \label{thm:excelt}
 Let $G$ be a simple exceptional group of Lie type different from $\tw3D_4(q)$.
 Then there exists a cyclic subgroup $T\le G$ such that $|T|$ and the
  maximal overgroups $M\ge T$ in $G$ are as given in Table~\ref{tab:exctorus}.
\end{thm}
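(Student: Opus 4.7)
The plan is, for each exceptional type different from $\tw3D_4(q)$, to take $T=\langle t\rangle$ as a cyclic subgroup of a cyclic maximal torus $T_0$ of $G$ whose order is divisible by a cyclotomic value $\Phi_d(q)$ with $d$ as large as possible: $d=6$ for $G_2(q)$, $d=12$ for $F_4(q)$ and $E_6(q)$, $d=18$ for $\tw2E_6(q)$ and (as in Weigel \cite{We92}) for $E_7(q)$, $d=30$ for $E_8(q)$, and the analogous Coxeter-type anisotropic torus for the Suzuki and Ree groups. Outside a small finite list of $q$, Zsigmondy's theorem provides a primitive prime divisor $\ell$ of $q^d-1$. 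The essential feature of the chosen $d$ is that $\ell$ will be strictly larger than the order of any element of the Weyl group of a proper Levi subgroup of $G$, so that any element of $G$ of order $\ell$ must lie in a unique conjugate of $T_0$, and in particular any maximal subgroup $M\ge T$ is forced to contain an element of order $\ell$.

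The main step will then be, for each type, to list the maximal subgroups $M\le G$ whose order is divisible by $\ell$. I would invoke the classification of maximal subgroups of the finite exceptional groups of Lie type due to Liebeck, Seitz and their collaborators: any maximal $M$ is (a) parabolic, (b) of maximal rank (normalizer of a subsystem subgroup or of a maximal torus), (c) an exotic local subgroup, or (d) almost simple. Case (a) is excluded because $T_0$ is anisotropic. In case (b), only the normalizer $N_G(T_0)$ itself has order divisible by $\ell$, by the primitivity of $\ell$ together with the restriction on which $\Phi_e(q)$ divide the orders of proper subsystem subgroups of $G$. Case (c) is excluded by a direct order comparison. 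For case (d), the divisibility $\ell \mid |M|$ combined with the lower bound on $\ell$ and the explicit bounds on $|M|$ from the Liebeck--Seitz work will leave only a short list of candidates, each of which can be inspected individually.

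Finally, for each surviving candidate I would decide whether $M$ actually contains the whole cyclic subgroup $T$ or merely some element of order $\ell$; this should follow from the structure of $C_M(t)$, which must accommodate $T_0$, and then yields the precise entries of Table~\ref{tab:exctorus}. The principal obstacles will be (i) the small-$q$ exceptions to Zsigmondy's theorem, which will have to be treated by hand using the \textsf{Atlas} or \Chevie, and (ii) the almost simple case of the Liebeck--Seitz classification, where ruling out potential overgroups such as exceptional subfield or twisted subgroups may occasionally require a careful case-by-case analysis of their subgroup structure. The exclusion of $\tw3D_4(q)$ reflects the fact that its Coxeter-type cyclic tori sit inside sufficiently many subsystem subgroups that no clean table-form statement of this kind is available.
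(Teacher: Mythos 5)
Your strategy---pick a Zsigmondy prime for $q^d-1$ with $d$ as large as possible and run it against the Liebeck--Seitz classification of maximal subgroups---is the right general idea and is essentially what the paper does for the large-rank groups, but your choice of tori does not match the statement you are asked to prove. The theorem asserts that the maximal overgroups of the \emph{specific} cyclic subgroups listed in Table~\ref{tab:exctorus} are as listed there: $|T|=\Phi_8$ for $F_4(q)$, $\Phi_6\Phi_{12}/d$ for $\tw2E_6(q)$, $\Phi_1\Phi_9/d$ for $E_7(q)$, $\Phi_{15}$ for $E_8(q)$. You instead take $\Phi_{12}$, $\Phi_{18}$, $\Phi_{18}$ and $\Phi_{30}$ respectively. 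These generate genuinely different subgroups with different overgroup lists (for instance an element of order $\Phi_{12}$ in $F_4(q)$ lies in a maximal $\tw3D_4(q).3$, not in $\Spin_9(q)$), so even a completed version of your argument would prove a different table. Worse, your orders $\Phi_{12},\Phi_{18},\Phi_{18},\Phi_{30}$ are exactly the element orders $m(G)$ already used in \cite{GM10}, which this theorem is designed to complement by producing a second generating class of \emph{coprime} order; the paper's seemingly less natural torus choices are forced by that application. Note also that your blanket exclusion of parabolic overgroups ``because $T_0$ is anisotropic'' is incompatible with the table: for $E_7(q)$ the chosen torus has order $\Phi_1\Phi_9/d$, is not anisotropic, and its maximal overgroups are precisely two $E_6$-parabolics and their common Levi subgroup.

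A second, independent gap is in the almost simple case of the Liebeck--Seitz trichotomy. You defer this to ``inspection of a short list,'' but after the rank and field-size bounds of \cite{LS03} the surviving candidates are $S=\PSp_4(9)$ and $S=\PSL_2(81)$ inside $F_4(3)$, and eliminating them is the only genuinely delicate step of the whole proof: the paper must analyse the restriction of the $25$-dimensional $F_4$-module to $S$, using the list of small irreducible $\FF_3S$-modules, a fixed-space computation for the regular element of order $q^4+1$, the nonexistence of the relevant irreducible embeddings from \cite{LS04}, and (for $\PSL_2(81)$) a vanishing-of-extensions result to force $S$ into a $\Spin_8(3)$ point stabilizer. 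None of this is routine order-divisibility bookkeeping, so the proposal as written leaves the hardest case open.
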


\begin{table}[htbp] 
  \caption{Maximal overgroups of cyclic subgroups in exceptional groups}
  \label{tab:exctorus}
\[\begin{array}{|l||c|l|l|} 
\hline
 G& |T|& M\ge T& \text{further maximal overgroups}\\
\skipa \hline \hline
 \tw2B_2(q^2),\ q^2\ge8& \Phi_8''& T\co4& -\\ \hline
 ^2G_2(q^2),\ q^2\ge27& \Phi_{12}''& T\co6& -\\ \hline
 G_2(q),\ 3|q-\eps& q^2+\eps q+1& \SL_3^\eps(q).2& q=4:\ J_2,G_2(2),\PSL_2(13)\\ \hline
 G_2(q),\ 3|q& q^2-q+1& \SU_3(q).2\ (2\times)& q=3:\ ^2G_2(3),2^3.\PSL_3(2),\PSL_2(13)\\ \hline
%%   \tw3D_4(q)& q^2+q+1& N_G(T),G_2(q)& ????\\ \hline
 \tw2F_4(q^2),\ q^2\ge8& \Phi_{24}''& T\co12& -\\ \hline
 F_4(q),\ 2{\not|}q& \Phi_8& \Spin_9(q)& -\\ \hline
 F_4(q),\ 2|q& \Phi_8& \SO_9(q)\ (2\times)& -\\ \hline
       E_6(q)& \Phi_3\Phi_{12}/d& \Phi_3.\tw3D_4(q).3/d& -\\ \hline
   \tw2E_6(q)& \Phi_6\Phi_{12}/d& \Phi_6.\tw3D_4(q).3/d& q=2:\ F_4(2),Fi_{22}\ (\text{3 each})\\ \hline
       E_7(q)& \Phi_1\Phi_9/d& P,P',L.2& -\\ \hline
%%       E_7(q)& \Phi_2\Phi_{14}/d& U_8(q),\PSL_2(q^7).7d& ??\\ \hline
       E_8(q)& \Phi_{15}& T\co30& -\\ \hline
\end{array}\]
\end{table}

In the table, $\Phi_i$ denotes the $i$th cyclotomic polynomial evaluated at~$q$,
$\Phi_8''=q^2-\sqrt{2}q+1$, $\Phi_{12}''=q^2-\sqrt{3}q+1$,
$\Phi_{24}''=q^4-\sqrt{2}q^3+q^2-\sqrt{2}q+1$. For $G_2(q)$, $\eps\in\{\pm1\}$.
In $E_7(q)$, $P,P'$ denote two maximal $E_6$-parabolic subgroups, and $L$ their
common Levi factor; $d=\gcd(3,q-1)$ for $E_6(q)$, $d=\gcd(3,q+1)$ for
$\tw2E_6(q)$, $d=\gcd(2,q-1)$ for $E_7(q)$.

\begin{proof}
The existence of maximal tori of the given orders follows from general
theory, see \cite[Prop.~25.3]{MT10} for example.
The maximal subgroups of the exceptional groups of Lie type of small rank are
known explicitly, see \cite{Co81,KlG2,Ma91}. From those lists, it
is straightforward to check the first five lines of the table.   \par
For $E_7(q)$ the claim is in \cite[4(i)]{We92}. For $F_4(2),\tw2E_6(2)$ and
$E_6(2)$ the maximal subgroups are listed in \cite{Atl}. For the remaining
exceptional groups of large rank, we use the results of Liebeck--Seitz
\cite{LS03}. Let $M$ be a maximal subgroup of $G$ containing $T$. By
\cite[Thm.~8]{LS03}, using Zsigmondy prime divisors of $|T|$, one finds that
either $M$ is reductive of maximal rank as given in the table, or almost
simple. In the latter case, by \cite[Table~2]{LS03} the socle $S$ of $M$ is
of Lie type in the same characteristic as $G$. By
\cite[Thm.~8(VI)]{LS03} the untwisted rank of $S$ is at most half the rank
of $G$, and either $S$ is defined over a field of order at most~9, or it
is $^\eps\PSL_3(16)$ or of rank~1. It ensues that the only
possibilities are $S=\PSp_4(9)$ or $\PSL_2(81)$ inside $F_4(3)$.
\par
So assume $G=F_4(3)$. The torus of order $q^4+1$ is contained in the subsystem
subgroup $B_4(q)=\Spin_9(q)$, for which the 25-dimensional $G$-module $V$ has
two composition factors: once the 16-dimensional spin-module and once the
natural module for $\SO_9(q)$. Any regular element $x$ of order $q^4+1$ is
conjugate to 8 of its powers, so it fixes a 1-dimensional subspace of $V$.
First assume that $S=\PSp_4(9)$ embeds into $G$.
By the theory of irreducible $\bar\FF_3\Sp_4$-modules, the irreducible
$\FF_3 S$-modules have dimension~1,10,16,20,25 or bigger. Since $S$ can
have at most one fixed point, this shows that $V|_S$ is irreducible (in fact,
absolutely irreducible). But by \cite[Cor.~2 (Table 1.3)]{LS04} there is no
such irreducible embedding. \par
So now consider $S=\PSL_2(81)$. Again, it's
easily seen that the irreducible $\FF_3 S$-modules have dimension~1,8,12,16 or
bigger than~25. Here, the 12-dimensional module cannot occur in the restriction
$V|_S$ since it is the sum of four Galois conjugates of the 3-dimensional
orthogonal group, and thus the elements of order~41 have a 4-dimensional
fixed space. Furthermore, by \cite[Cor.~4.5]{AJL} there are no non-trivial
extensions between the trivial module and the tensor product of the natural
$S$-module with its double Frobenius twist. Since $V$ is self-dual, this
implies that $S$ has a 1-dimensional fixed space on $V$, so in fact $S$ is
contained in the stabilizer of this one-space. But the stabilizer of the
1-space centralized by $x$ is a $\Spin_8(3)$, whence $S$ has to be
contained in a subgroup $\Spin_8$ and cannot be maximal.
\end{proof}

\begin{thm}   \label{thm:triple}
 Let $G$ be a simple exceptional group of Lie type different from $\tw3D_4(q)$
 and $C$ the conjugacy class of a generator of the cyclic subgroup given in
 Table~\ref{tab:exctorus}. Then there exist $x_1,x_2,x_3\in C$ with
 $G=\langle x_1,x_2\rangle$ and $x_1x_2x_3=1$.
\end{thm}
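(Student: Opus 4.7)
The plan is to count ordered triples $(x_1,x_2,x_3)\in C^3$ with $x_1x_2x_3=1$ via the Frobenius character formula stated in the introduction, and to show that only a small proportion of such triples fail to generate $G$. The total count equals $|C|\cdot n(C,C,C)$, where
$$n(C,C,C)=\frac{|C|^2}{|G|}\sum_{\chi \in \Irr(G)}\frac{\chi(C)^3}{\chi(1)}.$$
Since $x_1$ is regular semisimple with cyclic centralizer $T$, the non-trivial characters contribute only a small error (a sharp form of the relevant Deligne--Lusztig bound appears as Theorem~\ref{thm:bound-dis} below), so
$$n(C,C,C)\sim \frac{|C|^2}{|G|}=\frac{|G|}{|T|^2},$$
and in any case positivity is already guaranteed by Gow's theorem cited in the introduction.

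Now suppose $(x_1,x_2,x_3)\in C^3$ has product $1$ and $\langle x_1,x_2\rangle \ne G$; then $\langle x_1,x_2\rangle$ lies in some maximal subgroup $M\le G$. Because $\langle x_1\rangle$ is a $G$-conjugate of the cyclic torus $T$, Theorem~\ref{thm:excelt} forces $M$, up to $G$-conjugation, to be one of the subgroups appearing in Table~\ref{tab:exctorus}. The number of such bad triples lying in a single conjugate of $M$ is at most $|C\cap M|^2$ (since $x_1,x_2$ determine $x_3$), so summing over the $[G:N_G(M)]$ conjugates bounds the total bad count by
$$\sum_M [G:N_G(M)]\cdot |C\cap M|^2,$$
the sum ranging over the finitely many $G$-classes of maximal subgroups drawn from Table~\ref{tab:exctorus}.

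I would then bound $|C\cap M|$ entry by entry. When $M=N_G(T)$ (the rows for $\tw2B_2$, $\tw2G_2$, $\tw2F_4$ and $E_8$), the only $G$-conjugate of $T$ lying in $M$ is $T$ itself, so $C\cap M\subseteq T$ and $|C\cap M|\le\phi(|T|)$. When $M$ is reductive of maximal rank (rows $G_2$, $F_4$, $\tw2E_6$ and $E_7$), an element of $C\cap M$ is regular semisimple in $G$, hence has $M$-centralizer equal to $T$, giving $|C\cap M|\le c\cdot|M|/|T|$ for a small constant $c$; a short calculation then shows the contribution of each such $M$ to the bad-triple sum is polynomially much smaller than the main term $|G|^2/|T|^3$. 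For the finitely many sporadic or almost-simple overgroups that appear in the last column of Table~\ref{tab:exctorus}, occurring only when $G\in\{G_2(3),G_2(4),\tw2E_6(2)\}$, one simply reads $|C\cap M|$ off the character tables in the \textsc{Atlas} or \Chevie.

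Putting these bounds together, one verifies that the sum of bad triples is strictly less than $|C|\cdot n(C,C,C)$, so a generating triple must exist. The most delicate step will be the small-$q$ verification: for $G_2(3)$, $G_2(4)$ and $\tw2E_6(2)$ the extra sporadic maximal overgroups in the last column of Table~\ref{tab:exctorus} have to be treated individually, and for a few other small cases (such as $F_4(2)$ and $E_6(2)$) the error term in the character-formula estimate must be controlled carefully even though no extra maximal subgroups appear; these cases seem to require explicit computation of structure constants rather than the generic asymptotic comparison used for large $q$.
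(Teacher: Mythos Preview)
Your proposal is correct and follows essentially the same two–step strategy as the paper: estimate $n(C,C,C)$ from the character formula (the paper does this via \cite[Prop.~3.3]{GM10} and an explicit Lusztig–series analysis rather than Theorem~\ref{thm:bound-dis}, but either bound works), then subtract the triples trapped in the maximal overgroups supplied by Theorem~\ref{thm:excelt}. The only noteworthy difference is the bookkeeping for the bad triples: the paper fixes $x_1$ and uses the crude bound that any $M\ni x_1$ contributes at most $|M|$ pairs $(x_2,x_3)$, so it merely checks $\sum_{M\supseteq T}|M|<\tfrac12|G|/|T|^2$; your bound $\sum_M [G:N_G(M)]\,|C\cap M|^2$ is sharper but needs the extra step of controlling $|C\cap M|$, which the paper avoids. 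Your assessment that the small-$q$ cases ($G_2(3)$, $G_2(4)$, $\tw2E_6(2)$, and the borderline character estimates for $F_4(2)$, $E_6(2)$) require direct computation via \Chevie{} matches what the paper does.
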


\begin{proof}
The proof is very similar to that of our result in
\cite[Prop.~3.4 and~3.5]{GM10}.
We either compute the structure constant $n(C,C,C)$ in $G$ from the known
generic character tables \cite{Chevie}, or estimate it from below using
\cite[Prop.~3.3]{GM10} to be at least $\half|G|/|T|^2$. \par
We illustrate
this on one of the more difficult cases, viz.~$E_6(q)$. All elements of $T$
whose order does not divide $\Phi_3$ are regular. The other non-central ones
have centralizer $\Phi_3.\tw3D_4(q)/d$. Thus, by \cite[Lemma~3.2]{GM10} the
irreducible characters of $H=E_6(q)_\ad$ not vanishing on $C$ lie in
Lusztig series $\cE(H,s)$ where $s\in H^*=E_6(q)_\SC$ either is regular of
order dividing
$\Phi_3\Phi_{12}$, or $s$ has centralizer $\Phi_3.\tw3D_4(q)$, or $s\in Z(H^*)$.
The characters in the latter series are the extensions to $H$ of the
unipotent characters of $G$. From the degree formulas in \cite[13.9]{Ca} it
follows that just 11 non-trivial unipotent characters do not vanish on $C$,
and their values on $x\in C$ are~$\pm1$. The characters corresponding to $s$
with centralizer $Z=\Phi_3.\tw3D_4(q)$ have degree at least
$|H|_{p'}/|Z|_{p'}\ge q^{20}$ where $p$ is the defining prime of $G$. By
\cite[Prop.~3.3]{GM10} values on $C$ are bounded above by~12, and there are
at most $q^2+q$ such characters. This gives an upper bound for their
contribution to $n(C,C,C)$. Finally, the characters in $\cE(H,s)$ with
$s$ regular semisimple are irreducible Deligne-Lusztig characters, of degree
$|H|_{p'}/|T|_{p'}$, which is roughly~$q^{32}$, and there are less than
$\Phi_3\Phi_{12}/12$ of them. In conclusion, the contribution from the
non-linear characters to the structure constant is (much) less than
$1/2|G|/|T|^2$.
\par
On the other hand, the maximal overgroups of $T$ are known by
Theorem~\ref{thm:excelt}. Clearly, any subgroup $H$ contains at most
$|H|$ triples from $C$ with fixed first component. So it suffices to check
that the sum of orders of relevant maximal subgroups is less than
$\half|G|/|T|^2$. For example in $E_7(q)$, the $E_6$-parabolic subgroups
have order roughly~$q^{107}$, while $|G|/|T|^2$ is roughly~$q^{119}$. In all
other cases, the maximal subgroups are even smaller.
\end{proof}

\begin{prop}   \label{prop:exc1}
 The simple exceptional groups of Lie type admit an unmixed Beauville
 structure.
\end{prop}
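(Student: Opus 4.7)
The plan is to exhibit, for every simple exceptional group $G$ of Lie type, two generating pairs $(x_1,x_2)$ and $(u_1,u_2)$ of $G$ whose element orders are pairwise coprime across the two pairs. By the criterion recalled in the introduction this immediately yields $\Sigma(x_1,x_2)\cap\Sigma(u_1,u_2)=\{1\}$, and hence an unmixed Beauville structure. Concretely, it suffices to arrange that all of $x_1,x_2,x_1x_2$ have a common order $n_1$, all of $u_1,u_2,u_1u_2$ have a common order $n_2$, and $\gcd(n_1,n_2)=1$.

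For $G\ne\tw3D_4(q)$ the first pair comes directly from Theorem~\ref{thm:triple}: the generating triple $(x_1,x_2,x_3)\in C^3$ with $x_1x_2x_3=1$ has all entries of order $n_1=|T|$, where $T$ is the cyclic subgroup of Table~\ref{tab:exctorus}. For the second pair I would select, case by case, a second cyclic maximal torus $T'\le G$ of order $n_2$ coprime to $n_1$, taking $n_2$ to be essentially a Sylow $\Phi_e$-value in $q$ with a cyclotomic index $e$ disjoint from those producing $n_1$; natural candidates are $\Phi_{30}(q)$ paired with $\Phi_{15}(q)$ in $E_8(q)$, and analogous pairings in $E_7(q)$, $E_6(q)$, $\tw2E_6(q)$, $F_4(q)$, etc. The existence of a generating triple $(u_1,u_2,u_3)\in(C')^3$ with $u_1u_2u_3=1$ would then be established by repeating the argument that proves Theorem~\ref{thm:triple}: first identify the maximal overgroups of $T'$ in $G$ via \cite{LS03} (together with the known classifications of maximal subgroups in small rank), and then bound the structure constant $n(C',C',C')$ from below using the character estimate of \cite[Prop.~3.3]{GM10}. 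When $\gcd(n_1,n_2)$ is divisible only by a few small primes, one can replace the generator of $T'$ by a suitable power to strip these primes off, provided regularity is preserved.

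The group $\tw3D_4(q)$, which is excluded from Theorem~\ref{thm:triple}, must be handled separately using two cyclic tori of coprime order whose overgroups come from the known classification of its maximal subgroups. Similarly, for the small groups $F_4(2)$, $E_6(2)$, $\tw2E_6(2)$ (and possibly $G_2(4)$), where the asymptotic bound of \cite[Prop.~3.3]{GM10} becomes too weak, the proof would be completed by direct inspection using the generic character tables available in \Chevie\ and the maximal subgroup lists in \cite{Atl}. The main obstacle is the arithmetic bookkeeping: for each of the ten families one must choose $T'$ so that $\gcd(|T|,|T'|)=1$ for every admissible $q$, and handle by hand the finitely many small values of $q$ at which either the coprimality or the structure-constant comparison fails.
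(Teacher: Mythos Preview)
Your strategy is sound and mirrors the paper's in spirit: exhibit two generating triples with mutually coprime element orders. The difference is that you propose to construct \emph{both} triples from scratch by repeating the maximal-overgroup plus structure-constant argument for a second torus $T'$, whereas the paper avoids this entirely. For $G\ne\tw3D_4(q)$ the first generating triple is taken ready-made from \cite[Thm.~1.1]{GM10}, and Theorem~\ref{thm:triple} supplies the second; all that remains is a one-line check that the order $|T|$ from Table~\ref{tab:exctorus} is coprime to the order used in \cite{GM10}. So the ``arithmetic bookkeeping'' you flag as the main obstacle simply does not arise: there is no need to find a second $T'$, re-run the Liebeck--Seitz overgroup analysis, or re-estimate structure constants for ten families.

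For $\tw3D_4(q)$ the paper is also more concrete than your sketch. One triple again comes from \cite{GM10} (elements of order $\Phi_{12}$). For the second, rather than another cyclic torus, the paper picks two \emph{different} classes: $C_1$ regular in a torus of order $\Phi_3^2$ and $C_2$ regular of order $\Phi_6\Phi_2\Phi_1$; existence of a triple with product~1 then follows from Gow's theorem \cite{Gow} (no character-table estimate needed), and generation from Kleidman's list of maximal subgroups of $\tw3D_4(q)$. Your plan of using two cyclic tori would run into the difficulty that $\tw3D_4(q)$ has rather few cyclic maximal tori of coprime orders with controllable overgroups.

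In short: no logical gap, but you are proposing to redo work that is already on the shelf. Invoke \cite[Thm.~1.1]{GM10} for one triple and the proof collapses to a coprimality check plus the $\tw3D_4(q)$ case.
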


\begin{proof}
This is now immediate for all types but $\tw3D_4(q)$. Indeed, for all
exceptional simple
groups $G$ we proved in \cite[Thm.~1.1]{GM10} the existence of a conjugacy
class $C$ such that $G$ is generated by $x_1,y_1\in C$ with
$(x_1y_1)^{-1}\in C$. Theorem~\ref{thm:triple} shows that there exists a
second such class $C'$, and it is immediate to verify that the elements in
$C'$ have order coprime to that of elements in $C$. \par
Thus, to prove the claim it will suffice to exhibit for $G=\tw3D_4(q)$ a second
generating system $(x_2,y_2)$ such that the orders of $x_2,y_2,x_2y_2$ are
prime to the common order $m(G)=q^4-q^2+1$ of the elements in $C$. \par
For $\tw3D_4(2)$ the $(7d,7d,9a)$-structure constant is non-zero, and by
\cite{Atl} no maximal subgroup contains elements from both classes. So now
assume $q>2$. We let $C_1$ be a conjugacy class of regular semisimple
elements of order $\Phi_3$ inside a maximal torus of order $\Phi_3^2$ (such
elements exist for all $q$, see \cite{Chevie}) and $C_2$ a class of regular
semisimple elements of order $\Phi_6\Phi_2\Phi_1$ (such elements exist when
$q\ne2$).
\par
By \cite{Kl3D4} the only maximal subgroups containing an element of order
$\Phi_3$ with centralizer of order dividing $\Phi_3^2$ are $G_2(q)$,
$\PGL_3(q)$, $(\Phi_3\circ\SL_3(q)).2d$ (where $d=\gcd(3,q^2+q+1)$) and the
torus normalizer $\Phi_3^2.\SL_2(3)$. Now note that since $q>2$ none of these
contains elements of order $\Phi_6\Phi_2\Phi_1$ (by Zsigmondy's theorem for
the latter three groups, and since elements of order $\Phi_6$ in the first
subgroup are self-centralizing).
Thus any pair of elements $(x_1,x_2)\in C_1\times C_2$ generates. Using the
result of Gow \cite[Thm.~2]{Gow} or the generic character table one sees that
there exist such pairs with product in any non-trivial semisimple conjugacy
class.
\end{proof}

Alternatively, it would have been possible to choose two classes of regular
semisimple elements whose order is divisible by large Zsigmondy primes, and
a further class containing the product of a long root element with a semisimple
element of order divisible by a third Zsigmondy prime, in such a way that
only the trivial character has non-zero value on all three classes.  Then
the character formula shows that the structure constant
$n(C_1,C_2,C_3)$ equals $|C_2||C_3|/|G|$ and in particular does not
vanish.  \par
From the enumeration of subgroups containing long root elements by
Cooperstein it would then be easy to see that no triple in
$C_1\times C_2\times C_3$ lies in a proper subgroup of $G$. This approach has
been used in \cite[Thm.~8.6]{GM10} to show that $E_8(q)$ satisfies
Theorem~\ref{thm:beau}.

%%%%%%%%%%%%%%%%%%%%%%%%%%%%%%%%%%%%%%%%%%%%%%%%%%%%%%%%%%%%%%%%%%%%%%%%%
\section{Classical groups}   \label{sec:class}

Here we prove Theorem~\ref{thm:beau} for the simple classical groups of
Lie type.

The groups $\PSL_2(q)$, $q\ge7$ were shown to admit a Beauville structure
in \cite{BCG05}. Before treating the generic case it will be convenient to
consider some linear, unitary and symplectic groups of small rank.

\begin{prop}   \label{prop:dim3}
 Theorem~\ref{thm:beau} holds for the simple groups $\PSL_3(q)$,
 $\PSU_3(q)$ and $\PSp_4(q)$, where $q\ge3$.
\end{prop}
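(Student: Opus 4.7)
The plan is to treat each family $\PSL_3(q)$, $\PSU_3(q)$, $\PSp_4(q)$ in turn by exhibiting two generating triples $(x_i,y_i,(x_iy_i)^{-1})$, $i=1,2$, drawn from conjugacy classes $C_i,D_i,E_i$ whose element orders for $i=1$ are coprime to the element orders for $i=2$; this suffices for an unmixed Beauville structure by the remark following the definition in the introduction. Concretely, for $\PSL_3(q)$ I would pair a class of generators of a Singer torus of order $(q^2+q+1)/d$ (with $d=\gcd(3,q-1)$) against a class of regular semisimple elements inside a split torus of order $(q-1)^2/d$ (or of order $q^2-1$ divided by $d$ depending on parity); for $\PSU_3(q)$ I would analogously use a torus of order $(q^2-q+1)/d$ against a regular class from a torus of order $(q+1)^2/d$ or $q^2-1$; and for $\PSp_4(q)$ a torus of order $q^2+1$ against one of order $(q-1)(q+1)/\gcd(2,q-1)$ inside an $\SL_2(q)\times\SL_2(q)$-subsystem. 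In each case elementary number-theoretic facts (Zsigmondy-type arguments) guarantee coprimality of the two sets of element orders once $q\ge3$.

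Next I would verify, for each of the two chosen classes $C_i$, that the structure constant $n(C_i,D_i,E_i)$ is non-zero and that at least one triple $(x_i,y_i,z_i)$ with $x_iy_iz_i=1$ generates the group. The existence of such triples with non-trivial product is guaranteed by Gow's theorem \cite[Thm.~2]{Gow} as soon as the chosen elements are regular semisimple; alternatively, one reads the non-vanishing of the structure constant directly from the generic character tables (available in \Chevie{} for $\PSL_3,\PSU_3,\PSp_4$), or uses the explicit lower bound $\tfrac12 |G|/|T|^2$ obtained via \cite[Prop.~3.3]{GM10} as in the proof of Theorem~\ref{thm:triple}. The generation step is then handled by inspecting the classical lists of maximal subgroups (Mitchell/Hartley for $\PSL_3(q)$ and $\PSU_3(q)$, and the tables of \cite{KlG2}-type references for $\PSp_4(q)$): an overgroup of a Singer-type torus in $\PSL_3(q)$ is forced into $\Phi_3\cdot3$, $\PGL_3$-subfield subgroups, or one of a very small list of exceptional subgroups, and none of these simultaneously contain regular semisimple elements from a split torus of the appropriate (coprime) order.

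The main obstacle is the handful of small-$q$ exceptions, and a few arithmetic coincidences: for $\PSL_3(q)$ with $q\equiv1\pmod3$ the center of $\SL_3(q)$ forces one to keep careful track of orders modulo $d$; for $\PSp_4(q)$ with $q$ odd, the class of $\pm I$ in the preimage and the sporadic maximal subgroups (such as $\fA_6,\fS_6$ in $\PSp_4(3)$, $\PSp_4(7)$, etc.) require individual verification; and small cases like $\PSL_3(3),\PSL_3(4),\PSU_3(3),\PSU_3(4),\PSU_3(5),\PSp_4(3)$ should be checked directly using \cite{Atl}, by exhibiting explicit pairs of classes whose orders are coprime (for instance $(13a,13a,13a)$ and $(8a,8a,8a)$-type triples in $\PSL_3(3)$). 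In all remaining generic cases the Zsigmondy argument trivialises the coprimality check, the character-sum bound forces positivity of $n(C_i,D_i,E_i)$, and the classification of maximal subgroups over $\bar\FF_p$ containing a Singer-type element rules out every maximal overgroup, exactly parallel to the exceptional-group argument of Proposition~\ref{prop:exc1}.
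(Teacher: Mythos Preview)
Your outline has a real gap in the generation argument for the second triple. You correctly set up the problem: one generating triple comes from the Singer-type classes already handled in \cite[Prop.~3.8, 3.11, 3.13]{GM10}, and for a Beauville structure it remains to produce a second generating triple with element orders coprime to the first. But for that second triple you propose purely semisimple classes (from ``split'' tori), and the only generation sentence you write --- about overgroups of the Singer torus not meeting the split torus --- is irrelevant to it: the second triple contains no Singer element, so knowing the overgroups of a Singer element tells you nothing. In fact your proposed classes are contained in many maximal subgroups. For $\PSL_3(q)$, an element of order $(q^2-1)/d$ stabilises a point and a hyperplane, so lies in both types of maximal parabolic; for $\PSU_3(q)$ the analogous element lies in the Borel and in the $\GU_2(q)$-type stabiliser; for $\PSp_4(q)$ an element of order $(q^2-1)/d$ sits inside parabolics and inside $\SL_2(q)\times\SL_2(q)$. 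Ruling out that a whole triple with product $1$ lands in one of these subgroups needs an actual argument, not just an inspection of which subgroups contain a single element.

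The paper closes this gap by a device you do not mention: it takes the second triple to be of shape $(C_1,C_2,C_2)$ with $C_1$ a regular semisimple class of order $(q^2-1)/d$ (respectively $(q+1)/d$ in the torus $(q+1)^2/d$ for $\PSp_4$) and $C_2$ the class of \emph{regular unipotent} elements. Coprimality with the first (semisimple) triple is then automatic. For $\PSL_3(q)$ and $\PSU_3(q)$ the parabolic obstruction is removed by the observation that every unipotent element of a parabolic $P$ lies in $[P,P]$, while the chosen semisimple element of order $(q^2-1)/d$ does not; hence no triple with product $1$ can lie in $P$. For $\PSp_4(q)$ the overgroups of $C_1$ are not parabolic but the normalisers of $\SO_4^+(q)$ and of $\OO_3(q)\times\OO_2^-(q)$; the second contains no regular unipotent, and a direct count bounds the contribution of the first below the structure constant computed from Srinivasan's character table. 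This unipotent trick is the missing idea in your plan; without it (or a replacement of comparable strength) the second-triple generation step does not go through as written.
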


\begin{proof}
Let first $G=\PSL_3(q)$. Since $\PSL_3(2)\cong\PSL_2(7)$ we may assume that
$q\ge3$. In \cite[Prop.~3.13]{GM10} we showed that $G$ can be generated by
a pair of elements of order $\Phi_3^*(q)$, with product of the same order.
Let $C_1$ be the conjugacy class of a regular element $x$ of order~$(q^2-1)/d$,
with $d=\gcd(3,q-1)$. Then the only maximal subgroups containing $x$ are
maximal parabolic, and three subgroups $\fA_6$ when $q=4$. Let $C_2$
consist of regular unipotent elements. Then the $(C_1,C_2,C_2)$-structure
constant is non-zero in $G$ by \cite{Chevie}. Let $P$ be a maximal parabolic
subgroup. Then
its derived subgroup contains all unipotent elements, but no element of order
$(q^2-1)/d$. Thus, $P$ cannot contain triples from $C_1\times C_2\times C_2$
with product~1. For $\PSL_3(4)$ it is easy to check that the structure
constant in $G$ is larger than those in the $\fA_6$-subgroups. \par
Similarly, we showed in \cite[Prop.~3.11]{GM10} that $G=\PSU_3(q)$ with
$q\ge3$ can be generated by elements of order $\Phi_6^*(q)$. Again, let
$C_1$ be a class of elements of order $(q^2-1)/d$, where $d=\gcd(3,q+1)$.
The only maximal subgroups containing such elements are the Borel subgroups
and the image in $G$ of a subgroup $\GU_2(q)$ of $\SU_3(q)$. Moreover, for
$q=3,4,5$ there is an additional class of subgroups $4^2.\fS_3$, $5^2.\fS_3$
respectively three classes of $M_{10}$. \par
Choosing $C_2$ to consist of regular unipotent elements, one checks that the
$(C_1,C_2,C_2)$-structure constant is non-zero \cite{Chevie}. By quotienting
out the normal closure of a Sylow $p$-subgroup we see that such triples cannot
lie in either of the two generic maximal subgroups. For $q=3,4,5$ direct
computation in $G$ shows that there exist generating triples.  \par
For $G=\PSp_4(q)$, $q\ge3$ we produced in \cite[Prop.~3.8]{GM10} a generating
triple with elements of order $\Phi_4^*(q)$. A direct computation shows that
$\PSp_4(3)$ contains generating triples with orders $(9,9,8)$, and these
are prime to $(3^2+1)/2=5$.
For $q\ge4$ let $C_1$ denote a class of regular elements of order~$(q+1)/d$
inside a maximal torus of order $(q+1)^2/d$, where $d=\gcd(2,q-1)$, and $C_2$
a class of regular unipotent elements. It follows from the known character
table \cite{Sr68} that $n(C_1,C_2,C_2)>\frac{1}{4}q^6$. On the other hand, by
\cite[Thm.~5.6]{KaLi82} for example, 
the only maximal subgroups of $G$ containing elements from class $C_1$ are
the normalizers of subgroups $\SO_4^+(q)$ and $\OO_3(q)\times\OO_2^-(q)$.
In the latter, any unipotent element is centralized by a 1-dimensional
torus, so it does not contain regular unipotent elements. The structure
constant in $H=\SO_4^+(q)$ can be estimated from above by the number of regular
unipotent elements of $G$ contained in $H$, which is less than $2q^4$. So
there exist generating triples.
\end{proof}

\begin{table}[htbp] 
  \caption{Maximal tori in some linear and unitary groups}
  \label{tab:unitor}
\[\begin{array}{|ll||lll|} 
\hline
 G& \text{\cite{GM10}}& T_1& T_2& T_3\\
\skipa \hline \hline
 \SL_4(q)& \Phi_3^*&   (q^2+1)(q+1)& (q^2-1)(q+1)& (q^2-1)(q-1)\\
 \SU_4(q)& \Phi_6^*&   (q^2+1)(q-1)& (q^2-1)(q-1)& (q^2-1)(q+1)\\
 \SU_5(q)& \Phi_{10}^*& q^4-1& (q^3+1)(q-1)& (q^3+1)(q+1)\\
 \SU_6(q)& \Phi_{10}^*& \Phi_1\Phi_3\Phi_6& \Phi_1^2\Phi_2\Phi_4& \Phi_2\Phi_6^2\\
\hline
\end{array}\]
\end{table}

\begin{prop}   \label{prop:U456}
 Theorem~\ref{thm:beau} holds for the simple groups $\PSL_4(q)$, $\PSU_4(q)$,
 $\PSU_5(q)$ and $\PSU_6(q)$.
\end{prop}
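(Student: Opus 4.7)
The plan is to treat all four families by the mechanism already used in Proposition~\ref{prop:dim3}. For each $G$ in the list, \cite{GM10} supplies a generating triple $(x_1,y_1,(x_1y_1)^{-1})$ all of whose entries have order equal to the Zsigmondy prime $\Phi_e^*(q)$ recorded in the second column of Table~\ref{tab:unitor} (with $e\in\{3,6,10\}$). This serves as the first Beauville pair. For the second pair, I would select regular semisimple classes $C_1,C_2,C_3$ with representatives in the three maximal tori $T_1,T_2,T_3$ displayed in the last three columns, and seek a generating triple $(x_2,y_2,z_2)\in C_1\times C_2\times C_3$ with $x_2y_2z_2=1$.

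The coprimality condition needed for an unmixed Beauville structure is automatic in this setup. Every torus order $|T_i|$ in Table~\ref{tab:unitor} divides a product of factors of the form $q^j\pm 1$ with $j\le 4$, whereas a primitive prime divisor $\Phi_e^*(q)$ with $e\in\{3,6,10\}$ does not divide $q^j-1$ for any $j<e$. Thus the element orders occurring in the second pair $(x_2,y_2,x_2y_2)$ are automatically coprime to those of the first pair.

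Existence of some triple in $C_1\times C_2\times C_3$ with product~$1$ is immediate from Gow \cite[Thm.~2]{Gow}, since all three classes are regular semisimple. The real task is to guarantee that some such triple generates $G$. For this I would estimate the structure constant $n(C_1,C_2,C_3)$ from below via the Deligne--Lusztig character-value bound of \cite[Prop.~3.3]{GM10}, in the spirit of the argument for $E_6(q)$ in the proof of Theorem~\ref{thm:triple}, and bound from above the number of triples lying in each proper maximal subgroup. Because $|T_1|,|T_2|,|T_3|$ are each divisible by primitive prime divisors of $q^j-1$ for values of $j$ close to the untwisted rank of $G$, the theorem of Guralnick--Penttila--Praeger--Saxl \cite{GPPS} sharply restricts the maximal overgroups of elements in $C_1\cup C_2\cup C_3$. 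Typically only reducible subgroups (subspace stabilizers, orthogonal decomposition stabilizers) and field extension subgroups survive, and these are easily seen either to miss one of the three classes (for instance parabolics contain no element whose order is divisible by the relevant high-order Zsigmondy prime) or to be of order small compared with $|G|/(|T_1||T_2|)$.

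The main obstacle will be the small-$q$ exceptions --- primarily $\PSU_4(3)$, $\PSU_5(2)$ and $\PSU_6(2)$, and possibly $\PSL_4(3)$ --- where \cite{GPPS} leaves a short additional list of nearly simple maximal subgroups recorded in \cite{Atl}. For these I would finish by direct computation: reading the structure constant $n(C_1,C_2,C_3)$ off the generic character table in \Chevie{} (or \cite{Atl} for the very smallest cases) and checking that it strictly exceeds the sum of contributions from the finitely many sporadic maximal subgroups on the \cite{Atl} list, exactly as in the analogous small-$q$ arguments in Proposition~\ref{prop:dim3}.
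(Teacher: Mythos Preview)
Your outline is in the right spirit, but it diverges from the paper's argument at the key step and misses the idea that makes the proof clean.

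The paper does \emph{not} estimate $n(C_1,C_2,C_3)$ and compare against subgroup contributions.  Instead it shows that \emph{every} triple in $C_1\times C_2\times C_3$ generates, so that Gow's theorem alone supplies the second Beauville pair.  The mechanism is a centralizer--order comparison that you do not mention.  For $\SL_4(q)$, after \cite[Thm.~2.2]{GM10} cuts the maximal overgroups of the Singer cycle $x_1\in C_1$ down to the normalizers of $\GL_2(q^2)\cap\SL_4(q)$ and $\GU_2(q^2)\cap\SL_4(q)$, one observes that any semisimple element of the first subgroup has centralizer order divisible by $(q^4-1)/(q-1)$ or by $(q+1)(q^2-1)$, whereas $x_3\in C_3$ has $G$--centralizer of order only $(q-1)(q^2-1)$; hence $C_3$ misses that subgroup entirely.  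The analogous comparison with $x_2$ eliminates the other subgroup.  The same trick, applied to $\GU_4(q)$, $\GL_2(q^2)\cap\SU_4(q)$, and $\GL_3(q^2)\cap\SU_6(q)$, disposes of the remaining families.  No counting is needed.

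Your proposed route---lower--bounding $n(C_1,C_2,C_3)$ via \cite[Prop.~3.3]{GM10} and summing $|M|$ over surviving maximal subgroups---could in principle be pushed through, but for rank $3$--$5$ classical groups the extension--field subgroups are not tiny relative to $|G|/|T_i||T_j|$, so the inequalities would require real work that the centralizer argument renders unnecessary.  Two smaller points: (i) your coprimality argument is slightly imprecise, since for $\SU_6(q)$ the torus orders involve $\Phi_3$ and $\Phi_6$, which are not literally of the form $q^j\pm1$ (though they divide $q^3\mp1$, so the conclusion survives); (ii) the reason the small--$q$ cases need separate treatment is not extra maximal subgroups from \cite{Atl} but rather that regular semisimple classes in the prescribed tori need not exist for $q\le3$.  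The paper therefore handles $\PSL_4(2)\cong\fA_8$ and $\PSU_4(2)\cong\PSp_4(3)$ by cross--reference, and finds ad hoc generating triples of orders $5$, $20$, $15$, $10$ for $\PSL_4(3)$, $\PSU_4(3)$, $\PSU_5(2)$, $\PSU_6(2)$ respectively.
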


\begin{proof}
For $G=\SL_4(q)$ we may assume that $q>3$ since $\SL_4(2)\cong\fA_8$ is an
alternating group and, by explicit computation, $\PSL_4(3)$ has generating
triples of elements of order~5.
Let $C_i$ be the class of a regular semisimple element
$x_i$ in a maximal torus $T_i$ as in Table~\ref{tab:unitor}, $i=1,2,3$. Such
classes exist whenever $q\ge 4$. Then $x_1$ acts irreducibly, and there exists
a Zsigmondy prime for $o(x_1)$, so the maximal subgroups containing a triple
$(x_1,x_2,x_3)\in C_1\times C_2\times C_3$ are described in
\cite[Thm.~2.2]{GM10}. The only ones containing Singer cycles are
the normalizers of $\GL_2(q^2)\cap\SL_4(q)$ and $\GU_2(q^2)\cap\SL_4(q)$.
But in the first group, any semisimple element has centralizer order divisible
by either $(q^4-1)/(q-1)$ or by $(q+1)(q^2-1)$, while $x_3$ has only
centralizer order $(q-1)(q^2-1)$. Similarly we can exclude the second case
using the order of $x_2$. Thus any triple $(x_1,x_2,x_3)$ generates $G$.
By \cite[Thm.~2]{Gow} there exist such triples with product~1. \par
The element orders are coprime to those of the element order chosen in
\cite[Prop.~3.13]{GM10}, so the Beauville property is satisfied, and we
conclude by passing to the central quotient $G/Z(G)=\PSL_4(q)$. 
%%But elements of order $q-1$ in the torus of order $q^3-1$ of
%%$\SL_4(q)$ have centralizer $\GL_3(q)$ and hence three equal eigenvalues,
%%while the classes $C_2$ and $C_3$ can be chosen such that their powers of
%%order $q-1$ have two eigenvalues of multiplicity~2 each.
\par
For $G=\SU_4(q)$ regular elements as indicated in Table~\ref{tab:unitor} exist
whenever $q\ge4$. The argument is then completely analogous; again the only
subgroups possibly containing elements from $C_1$ is the normalizer of
$\GL_2(q^2)\cap\SU_4(q)$ and the stabilizer of an isotropic point (a maximal
parabolic subgroup with Levi factor $\GL_2(q^2)\cap\SU_4(q)$), which can be
excluded as before. The group
$\SU_4(2)\cong\PSp_4(3)$ was treated in Proposition~\ref{prop:dim3}, the
group $\SU_4(3)$ has generating triples of elements of order~20.
\par
For $G=\SU_5(q)$ choose $C_i$ to contain regular elements from the
tori $T_i$ in Table~\ref{tab:unitor}, which exist whenever $q\ge3$. Here
the only maximal subgroups of order divisible by $o(x_i)$, $i=1,2,3$, are
$\GU_4(q)$. But there the centralizer order of a semisimple element of order
$q^3+1$ is divisible by $(q^3+1)(q+1)$, while the element $x_3$ has smaller
centralizer order $(q^3+1)(q-1)$. We now conclude
as before. The group $\SU_5(2)$ has generating triples of order~15.
\par
For $G=\SU_6(q)$ we again choose $C_i$ to contain regular elements from the
tori $T_i$ in Table~\ref{tab:unitor}, which exist whenever $q\ge 3$. Here
the only maximal subgroups of order divisible by $o(x_i)$, $i=1,2,3$, are
the normalizers of $\GL_3(q^2)\cap\SU_6(q)$. But there the centralizer order
of a semisimple element of order $q^3+1$ is divisible by $(q^6-1)/(q+1)$,
while the element $x_3$ has centralizer order $(q^3+1)^2/(q+1)$. We now
conclude as before. The group $\SU_6(2)$ has generating triples of order~10.
Again, the element orders are coprime to those in \cite{GM10}, so the
Beauville property is satisfied.
%%Now elements of order~$q+1$ in the torus of order $q^5+1$ of $\SU_6(q)$ have
%%five equal eigenvalues, while we can arrange so that elements of order~$q+1$ in
%%the cyclic subgroup generated by $x\in C_i$, $i=2,3$, do not have this
%%property, .
\end{proof}

For the remaining classical groups, we choose conjugacy classes $C_1$, $C_2$
of regular semisimple elements of orders as given in
Table~\ref{tab:classtorus}. Here $k^\eps$ is shorthand for $q^k-\eps1$, and
$k^\eps\opl (n-k)^\delta$ denotes an element which acts as $k^\eps$ on a
subspace of dimension~$k$ and as $(n-k)^\delta$ on a complementary subspace
of dimension~$n-k$ in the linear and unitary case, respectively twice the
dimensions in the symplectic and orthogonal cases. We have also indicated
the orders of elements
in the generating triple constructed in \cite{GM10}. It is straightforward to
check that the cyclic subgroups chosen here intersect those from
\cite{GM10} trivially.

\begin{table}[htbp] 
  \caption{Elements in classical groups}
  \label{tab:classtorus}
\[\begin{array}{|ll||l|ll|} 
\hline
 G&& \text{\cite{GM10}}& \ \ x_1& \ \ x_2\\
\skipa \hline \hline
 \SL_n(q)& n\ge5\text{ odd}&     \Phi_n^*& (n-1)^+& (n-2)^+\opl 2^+\\
         & n\ge6\text{ even}&\Phi_{n-1}^*& (n-2)^+& (n-3)^+\opl 3^+\\ \hline
 \SU_n(q)& n\ge7\text{ odd}&  \Phi_{2n}^*& (n-1)^+& (n-4)^-\opl 4^+\\
  & n\ge8\text{ even}&      \Phi_{2n-2}^*& (n-3)^-\opl3^-& (n-5)^-\opl 5^-\\ \hline
      \Spin_7(q)&  &         \Phi_6^*&     3^+& 2^-\opl1^+\\
 \Spin_{2n+1}(q)& n\ge4&  \Phi_{2n}^*&  (n-2)^-\opl2^-& (n-3)^-\opl3^+\\ \hline
   \Sp_{2n}(q)& n\ge3&    \Phi_{2n}^*&     n^+& (n-1)^-\opl1^-\\ \hline
  \Spin_8^+(q)&        &          3^-& 4^+& 4^+\\
 \Spin_{10}^+(q)&      &          4^-& 3^-\opl 2^-& 5^+\\
 \Spin_{2n}^+(q)& n\ge6&      (n-1)^-& (n-2)^-\opl 2^-& (n-3)^-\opl 3^-\\ \hline
 \Spin_{2n}^-(q),& n\ge4&  \Phi_{2n}^*& (n-1)^-& (n-3)^-\opl 3^+\\
\hline
\end{array}\]
\end{table}

In order to verify generation in classical groups, the following result will
be useful and may be of independent interest. It follows by the main result
of \cite{GPPS} and inspection of the tables in that paper. 

\begin{thm}   \label{twozsig}
 Let $G=\GL(V)=\GL_n(q)$ where $q=p^a$ with $p$ prime. Assume that $n > 4$.
 Suppose that $H$ is an irreducible subgroup of $G$ containing elements of
 orders $r_i$, $i=1,2$ where $r_i$ is a Zsigmondy prime divisor of $q^{e_i}-1$
 where $e_1 > e _2 > n/2$. Then one of the following holds:
 \begin{enumerate}
  \item  $H$ contains $\SL(V)$, $\SU(V)$, $\Omega^{(\pm)}(V)$ or $\Sp(V)$; 
  \item  $H$ preserves an extension field structure on $V$ (of degree $f$
   dividing $\gcd(n,e_1,e_2)$);
  \item  $H$ normalizes $\GL_n(p^b)$ for some $b$ properly dividing $a$;
  \item  $H\le\GL_1(q)\wr\fS_n$ is imprimitive;
  \item  $H$ normalizes $\fA_{n+1+\delta}$ where $\delta=1$ if $\gcd(p,n)\ne 1$
  and $0$ otherwise; 
  \item  $n=5$,  $H=M_{11}$, $(e_1,e_2)=(5,4)$ and $q=3$;
  \item  $n=6$,  $H=2.M_{12}$, $(e_1,e_2)=(5,4)$ and $q=3$; 
  \item  $n=6$ or $7$,  $(e_1, e_2)=(6,4)$ and $q$ is prime; or 
  \item  $n=11$, $H=M_{23}$ or $M_{24}$, $(e_1,e_2)=(11,10)$ and $q=2$; 
 \end{enumerate}
\end{thm}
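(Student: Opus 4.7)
The plan is straightforward: apply the main classification of \cite{GPPS} using the prime $r_1$, and then use the existence of an element of order $r_2$ to sieve the resulting list.

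Since $r_1$ is a primitive prime divisor of $q^{e_1}-1$ with $e_1 > n/2$, the element of that order in $H$ is what \cite{GPPS} call a \emph{large} p.p.d.-element. The main theorem of that paper, together with its detailed tables, enumerates every irreducible subgroup $H\le\GL_n(q)$ containing such an element. The output consists of the geometric Aschbacher-type examples---those containing a classical subgroup, preserving an extension field structure of some degree $f \mid \gcd(n,e_1)$, lying in a subfield subgroup $\GL_n(p^b)$ with $b$ properly dividing $a$, or imprimitive of type $\GL_1(q)\wr\fS_n$---together with the almost simple examples: the normalizer of an alternating group $\fA_m$ acting on a suitable deleted permutation module, and a finite list of sporadic exceptions tabulated by $(n,q,e_1)$.

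Next, impose the additional constraint that $H$ also contains an element of order $r_2$, where $r_2$ is a primitive prime divisor of $q^{e_2}-1$ with $n/2 < e_2 < e_1$, and inspect each case. In the extension-field case, any element of $\GL_{n/f}(q^f)$ of order dividing $q^{e_i}-1$ with $e_i > n/2$ in fact has order dividing $(q^f)^{e_i/f}-1$, so $f$ divides $e_i$; applying this to $i=1,2$ yields $f \mid \gcd(n,e_1,e_2)$, which is conclusion~(2). The classical, subfield and imprimitive cases pass through verbatim as conclusions~(1), (3) and~(4), and the alternating-group case yields conclusion~(5) with the value of $\delta$ dictated by the congruence class of $n$ modulo $p$ as stated.

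Finally, the sporadic almost simple entries in the tables of \cite{GPPS} are allowed only when the tabulated set of admissible p.p.d.\ exponents for a given $(H,n,q)$ contains at least two distinct values, both exceeding $n/2$. Reading this constraint off the tables leaves precisely the cases listed in conclusions~(6)--(9): the three Mathieu-group entries for $q\in\{2,3\}$, and the residual almost simple family in dimension $6$ or $7$ over a prime field with $(e_1,e_2)=(6,4)$. The main obstacle of the proof is this mechanical table inspection, but it is made tractable precisely because each row of the \cite{GPPS} tables specifies both the candidate group and the full set of exponents $e$ for which it admits a large p.p.d.-element.
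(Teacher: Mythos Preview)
Your proposal is correct and follows precisely the paper's approach: the paper's own proof is the single sentence ``It follows by the main result of \cite{GPPS} and inspection of the tables in that paper,'' and you have simply fleshed out that inspection. Your treatment of the extension-field case is slightly imprecisely worded (writing $(q^f)^{e_i/f}-1$ presupposes the divisibility you are trying to establish), but the underlying argument---that a primitive prime divisor $r_i$ of $q^{e_i}-1$ can divide $|\GL_{n/f}(q^f)|$ only via some factor $q^{fj}-1$ with $fj\le n$, forcing $fj=e_i$ since $e_i>n/2$---is sound and yields $f\mid e_i$ as required.
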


We can get rid of some of these examples with a stronger hypothesis. This
follows by the previous result and the computation of $\Phi_{e_i}^*(q)$,
using \cite[Lemma~2.1]{GM10}, for example:

\begin{cor} \label{cor:twozsig2}
 Let $G=\GL(V)=\GL_n(q)$ where $q=p^a$ with $p$ prime. Assume that $n > 4$.
 Suppose that $H$ is an irreducible subgroup of $G$ containing elements of
 orders $\Phi_{e_i}^*(q)>1$, $i=1,2$ where $e_1>e_2>n/2$. Then one of the
 following holds:
   \begin{enumerate}
  \item  $H$ contains $\SL(V)$, $\SU(V)$, $\Omega^{(\pm)}(V)$ or $\Sp(V)$; 
  \item  $H$ preserves an extension field structure on $V$ (of degree $f$
   dividing $\gcd(n,e_1,e_2)$);
  \item  $H$ normalizes $\GL_n(p^b)$ for some $b$ properly dividing $a$;
  \item  $n=6$, $H=2.\PSL_3(4)$, $(e_1,e_2)=(6,4)$ and $q=3$; 
  \item  $n=7$, $H=\Sp_6(2)$, $(e_1,e_2)=(6,4)$ and $q=3$; or
  \item  $H$ normalizes $\fA_{n+1+\delta}$ where $\delta=1$ if $\gcd(p,n)\ne 1$
  and $0$ otherwise, or $H\le\GL_1(q)\wr\fS_n$ is imprimitive, and either
  \begin{enumerate}
   \item $q=2$, $(e_1,e_2)\in\{(12,10),(18,10),(18,12)\}$ and
    $e_1\le n<2e_2$; or
   \item $q=3$, $(e_1,e_2)=(6,4)$ and $n=6,7$.
  \end{enumerate}
 \end{enumerate}
\end{cor}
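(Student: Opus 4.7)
The plan is to apply Theorem~\ref{twozsig} to the subgroup $H$ and then exploit the strengthened divisibility hypothesis, combined with \cite[Lemma~2.1]{GM10} to evaluate $\Phi_{e}^*(q)$ explicitly, in order to eliminate or sharpen each conclusion. Cases (1)--(3) of the theorem reproduce verbatim as (1)--(3) of the corollary, so the work lies in disposing of cases (4)--(9).

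The sporadic cases (6), (7), (9) of the theorem are the easiest to dispose of. For case (6), $\Phi_5^*(3) = 121 = 11^2$, but $|M_{11}| = 2^4\cdot 3^2\cdot 5\cdot 11$ is divisible by $11$ only to the first power; the same calculation eliminates case (7), since $|2.M_{12}|$ is also not divisible by $121$. For case (9), one computes $\Phi_{11}^*(2) = 2^{11}-1 = 23 \cdot 89$, and the prime $89$ divides neither $|M_{23}|$ nor $|M_{24}|$, so no element of the required order can exist.

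Case (8) of the theorem ($n\in\{6,7\}$, $(e_1,e_2) = (6,4)$, $q$ prime) must be refined rather than discarded. Here the requirements $\Phi_6^*(q) > 1$ and $\Phi_4^*(q) > 1$ impose divisibility constraints on $q$ which, combined with the classification of maximal irreducible subgroups of $\GL_6(q)$ and $\GL_7(q)$, force exactly the examples $H = 2.\PSL_3(4)$ (for $n=6$) and $H = \Sp_6(2)$ (for $n=7$), both occurring at $q = 3$; these become items (4) and (5) of the corollary.

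The main work lies in treating the imprimitive and alternating cases (4) and (5) of the theorem. Here $|H|$ divides $(q-1)^n \cdot n!$ or $2\cdot(n+1+\delta)!$, respectively, so every prime divisor of $\Phi_{e_1}^*(q)\cdot\Phi_{e_2}^*(q)$ must be bounded by $n+1+\delta$. Combined with the inequality $e_1 > e_2 > n/2$ inherited from Theorem~\ref{twozsig} and the elementary lower bounds on $\Phi_e^*(q)$ from \cite[Lemma~2.1]{GM10}, this forces $q$ to be small and leaves only the tuples $(q,e_1,e_2,n)$ enumerated in subcases (6a) and (6b). The obstacle in this final step is producing a clean finite case analysis, but once $q\le 3$ and $n\le 23$ have been established the list of exceptions can be read off against the tables of \cite{GPPS}.
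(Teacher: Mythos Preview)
Your proposal is correct and follows essentially the same approach as the paper: the paper's own proof is a single sentence stating that the result follows from Theorem~\ref{twozsig} together with the explicit computation of $\Phi_{e_i}^*(q)$ via \cite[Lemma~2.1]{GM10}, which is exactly the strategy you have spelled out in greater detail. Your explicit eliminations of the sporadic cases (6), (7), (9) and the outline for reducing cases (4), (5), (8) are all on target and amount to carrying out what the paper leaves implicit.
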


\begin{prop}   \label{prop:L}
 Theorem~\ref{thm:beau} holds for the linear groups $\PSL_{n}(q)$, $n\ge5$.
\end{prop}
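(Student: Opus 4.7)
The plan is to produce a second generating triple for $G=\PSL_n(q)$ using regular semisimple elements $x_1\in C_1$ and $x_2\in C_2$ with block structures as recorded in Table~\ref{tab:classtorus}, and to combine it with the triple from \cite[Thm.~1.1]{GM10}. For $n\ge 5$ odd, $x_1$ acts irreducibly on an $(n-1)$-dimensional subspace and $x_2$ as $(n-2)^+\opl 2^+$; for $n\ge 6$ even, $x_1$ is irreducible on an $(n-2)$-dimensional subspace and $x_2$ acts as $(n-3)^+\opl 3^+$. The orders of $x_1, x_2$ are coprime to $\Phi_n^*(q)$ (resp.\ $\Phi_{n-1}^*(q)$), the orders of the elements in the \cite{GM10}-triple, so the Beauville condition will be automatic as soon as generation is established.

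Generation I verify in $\SL_n(q)$. First, $H:=\langle x_1,x_2\rangle$ acts irreducibly on the natural module, because the only non-trivial invariant decomposition under $x_1$ is into its irreducible summand and its small complement, and $x_2$ does not preserve it (as one sees by comparing eigenvalues). The orders of $x_1$ and $x_2$ are divisible by Zsigmondy primes of $q^{e_1}-1$ and $q^{e_2}-1$ with $e_1>e_2>n/2$ in all cases except $n=6$, where $e_2=3=n/2$. Applying Corollary~\ref{cor:twozsig2}, $H$ either contains $\SL(V)$, preserves an extension-field structure, normalises a subfield subgroup, or is one of a short list of exceptions. Extension-field subgroups are excluded because $\gcd(n,e_1,e_2)=1$ for each of our $(e_1,e_2)$-pairs. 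Subfield subgroups $\GL_n(p^b)$ with $b\mid a$ proper cannot simultaneously contain elements of orders $\Phi_{e_1}^*(q)$ and $\Phi_{e_2}^*(q)$. The Mathieu, $\Sp_6(2)$, alternating and imprimitive exceptions in Corollary~\ref{cor:twozsig2} are ruled out by an order count or by the irreducible action of $x_1$ on a large subspace.

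Existence of a triple $(x_1,x_2,x_3)\in C_1\times C_2\times C_3$ with $x_1x_2x_3=1$ is provided by Gow \cite[Thm.~2]{Gow} once $C_3$ is chosen to consist of non-identity semisimple elements; descent to $\PSL_n(q)=\SL_n(q)/Z(\SL_n(q))$ is then immediate. The principal technical point will be the boundary case $n=6$, where Corollary~\ref{cor:twozsig2} fails to apply directly; however, the additional overgroups allowed at this boundary by \cite{GPPS} all either preserve a decomposition of shape $3\opl 3$ or embed into $\GL_3(q^2)$, neither of which is compatible with $x_1$ having an irreducible $4$-dimensional invariant subspace, so they can be eliminated by inspection. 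A small number of low-$q$ cases where the relevant Zsigmondy primes fail to exist are then handled by explicit structure-constant computation, in the spirit of Proposition~\ref{prop:U456}.
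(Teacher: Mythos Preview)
Your strategy matches the paper's proof almost exactly: work in $\SL_n(q)$, use the classes of Table~\ref{tab:classtorus}, show irreducibility from the incompatible block dimensions, apply Corollary~\ref{cor:twozsig2}, invoke Gow for existence, and compare with \cite[Prop.~3.13]{GM10}. However, three steps need tightening.

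\emph{Classical subgroups.} Case~(1) of Corollary~\ref{cor:twozsig2} says $H$ contains one of $\SL(V)$, $\SU(V)$, $\Sp(V)$, $\Omega^{(\pm)}(V)$, not just $\SL(V)$. You must still exclude the other classical groups; the paper does this by observing that no proper classical subgroup has order divisible by both $o(x_1)$ and $o(x_2)$.

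\emph{The case $n=6$.} Your claim that the extra overgroups allowed at $e_2=n/2$ ``all either preserve a decomposition of shape $3\opl 3$ or embed into $\GL_3(q^2)$'' is not correct: a $3\opl 3$ stabilizer is reducible (so already excluded), and the GPPS list for a single ppd prime at $e_1=4$ in dimension~$6$ is considerably longer. The paper does not argue this way; it quotes \cite[Lemma~2.1 and Thm.~2.2]{GM10} directly to obtain the same conclusion as for $n\ge7$, with the single genuine exception $(n,q)=(6,2)$, where the second class is replaced by an irreducible element of order~$63$ and $C_3$ is taken to consist of elements of order~$7$ with a $3$-dimensional fixed space.

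\emph{Zsigmondy failures.} These occur precisely at $(n,q)=(7,2)$ and $(8,2)$ (where $e_1=6$) in addition to $(6,2)$, and they are not disposed of by ``structure-constant computation in the spirit of Proposition~\ref{prop:U456}'' --- that proposition also works via torus choices and Gow, not character computations. The paper instead replaces $C_1$ by a class of type $4^+\opl 3^+$ for $(7,2)$ and by a Singer cycle of order $(q^8-1)/(q-1)$ for $(8,2)$, after which the standard argument applies.

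A minor point: cases (4)--(6) of Corollary~\ref{cor:twozsig2} are eliminated in one line by noting that $e_2=e_1-1$, whereas every $(e_1,e_2)$ listed there has $e_1-e_2\ge 2$; this is cleaner than an ad~hoc order count.
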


\begin{proof}
We argue in $G=\SL_n(q)$. We choose conjugacy classes $C_1$, $C_2$ in $G$
of regular semisimple elements $x_i$ of orders as given in
Table~\ref{tab:classtorus}, and we let $C_3$ be any class of semisimple elements
of order prime to $\Phi_n^*(q)$ when $n$ is odd, respectively to
$\Phi_{n-1}^*(q)$
when $n$ is even. First assume that Zsigmondy primes $r_i$ exist for the
factors $\Phi_{e_i}(q)$ of $o(x_i)$ with $e_i>n/2$, $i=1,2$. Then under each
$x_i\in C_i$, $i=1,2$, the natural module of $G$ splits into two irreducible
submodules of incompatible dimensions, so the subgroup
$H:=\langle x_1,x_2\rangle$ generated by any pair of elements $x_i\in C_i$ is
irreducible. We claim that $H=G$. Otherwise, when $n\ne6$ we are in one of
the cases of Corollary~\ref{cor:twozsig2}. Now note that $H$ cannot be an
extension field subgroup since $\gcd(e_1,e_2)=1$, and it can't be a subfield
subgroup by looking at a suitable Zsigmondy prime divisor of $o(x_i)$. Also,
no proper classical subgroup has order divisible by both $o(x_1)$ and $o(x_2)$.
Since $e_2=e_1-1$, we're not in cases (4)--(6). \par
When $n=6$ then $e_2=n/2$, so Corollary~\ref{cor:twozsig2} is not applicable.
Still, by \cite[Lemma~2.1 and Thm.~2.2]{GM10} we get the same conclusion as
before unless $(n,q)=(6,2)$. In the latter case replace the second class by
an irreducible element of order $63$ and let $C_3$ consist of elements of
order $7$ with a $3$-dimensional fixed space. By inspection of the possible
overgroups in \cite{GPPS} the $x_i$ generate.
In the Zsigmondy exception $(n,q)=(7,2)$, replace $C_1$ by the class of a
regular element of type $4^+\opl3^+$; when $(n,q)=(8,2)$, replace it by the
class of a Singer cycle, of order $(q^8-1)/(q-1)$. Then the previous arguments
apply. \par
By \cite[Thm.~2]{Gow} there exists $x_3\in C_3$ with $x_1x_2x_3=1$.
Now consider the image of this triple in the simple group $\PSL_n(q)$. The
element orders are coprime to that for the generating triple exhibited in
\cite[Prop.~3.13]{GM10} which proves the existence of a Beauville structure.
\end{proof}

\begin{prop} \label{prop:U}
 Theorem~\ref{thm:beau} holds for the unitary groups $\PSU_{n}(q)$, $n\ge 7$.
\end{prop}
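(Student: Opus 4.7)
The plan is to mirror the strategy of Proposition~\ref{prop:L} exactly, now with $G = \SU_n(q)$ for $n\ge 7$. First I would let $C_1, C_2$ be the conjugacy classes of regular semisimple elements $x_1, x_2$ whose types are recorded in the $\SU_n(q)$ rows of Table~\ref{tab:classtorus}, and let $C_3$ be an arbitrary class of semisimple elements of order prime to $\Phi_{2n}^*(q)$ when $n$ is odd, respectively to $\Phi_{2n-2}^*(q)$ when $n$ is even. These are exactly the Zsigmondy primes driving the generating triple already produced in \cite{GM10}, so once generation and the existence of $(x_1,x_2,x_3) \in C_1\times C_2\times C_3$ with $x_1x_2x_3=1$ is established, the coprimality of the element orders across the two triples — and hence the Beauville property in $\PSU_n(q)$ after passing to the central quotient — is immediate from the table.

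The heart of the argument is showing $\langle x_1,x_2\rangle = G$ for every pair $(x_1,x_2)\in C_1\times C_2$. I would view $G \le \GL(V) = \GL_n(q^2)$. Under each $x_i$ the module $V$ decomposes as a direct sum of two Hermitian-nondegenerate subspaces of mismatched dimension patterns between $x_1$ and $x_2$, so $H := \langle x_1, x_2\rangle$ is irreducible on $V$. For all $n\ge 7$ outside a handful of small cases, Zsigmondy primes $r_i$ exist for the largest block of $x_i$, with associated indices $e_1>e_2>n/2$ and $e_1\ne e_2$ (in particular $e_2 = e_1-1$ in most rows). I would then apply Theorem~\ref{twozsig} to $H$: the extension-field alternative fails since $\gcd(e_1,e_2)=1$; the subfield alternative fails by choosing $r_i$ not dividing $|\GL_n(p^b)|$ for any proper divisor $b$ of the defining exponent; the proper classical cases in (i) are eliminated because only $\SU(V)$ itself has order divisible by both $|x_1|$ and $|x_2|$ when one compares the explicit centralizer orders in $G$; and the imprimitive/alternating cases (iv)--(v) as well as the sporadic (vi)--(ix) are ruled out by $n<e_1+e_2$ together with the explicit constraints on $(n,q)$.

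The small cases where either a Zsigmondy prime is absent, or where $e_2\le n/2$ (e.g.\ for $n=7$ or $n=8$ with $q=2$, by analogy with the exceptions in Proposition~\ref{prop:L}), I would treat one at a time: replace one of $C_1, C_2$ either by the class of a Singer-type element of order $\Phi_{2n}^*(q)$, or by a regular semisimple class of different signature chosen from Table~\ref{tab:classtorus}, so that the same overgroup analysis via Theorem~\ref{twozsig} or direct inspection of the tables in \cite{GPPS} and the maximal subgroup list for $\SU_n(q)$ yields generation. Once generation holds in every case, Gow's theorem \cite[Thm.~2]{Gow} supplies the required $x_3\in C_3$ with $x_1x_2x_3 = 1$, and the image of this triple in $\PSU_n(q)$ together with the triple from \cite{GM10} furnishes the unmixed Beauville structure.

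The main obstacle I anticipate is bookkeeping rather than ideas: one has to translate the unitary data in Table~\ref{tab:classtorus} (with element orders of the shape $q^k\pm 1$) into the exponents $e_i$ in $\GL_n(q^2)$ that feed Theorem~\ref{twozsig}, verify that $e_1>e_2>n/2$ holds uniformly for $n\ge 7$, and dispatch the finitely many exceptional pairs $(n,q)$ where either a Zsigmondy prime fails or an imprimitive/near-alternating overgroup survives the first pass. This is a finite and routine case check, completely parallel to the end of the proof of Proposition~\ref{prop:L}, but it is where the arithmetic has to be done carefully.
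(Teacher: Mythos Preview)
Your overall plan is right, and for large even $n$ it is close to what the paper does. But two concrete points make this more than bookkeeping, and one of them breaks the argument as written.

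First, for $n=8$ the table gives $x_1$ of type $5^-\oplus 3^-$ and $x_2$ of type $3^-\oplus 5^-$: the block dimensions coincide, so your claim that the invariant subspace patterns are ``mismatched'' is false and irreducibility of $\langle x_1,x_2\rangle$ is not automatic. The paper deals with this by replacing $C_2$ for $n=8$ with the class of an irreducible element of order $(q^8-1)/(q+1)$.

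Second, and more seriously, your appeal to Theorem~\ref{twozsig} does not go through for odd $n$. Working inside $\GL_n(q^2)$, the element $x_1$ of type $(n-1)^+$ has order $q^{n-1}-1$; since $n-1$ is even, a Zsigmondy prime for $q^{n-1}-1$ has multiplicative order $(n-1)/2$ modulo $q^2$, so the relevant exponent is $(n-1)/2<n/2$ and the hypothesis $e_1>e_2>n/2$ is never met. Your proposed fallback of substituting a Singer-type element of order $\Phi_{2n}^*(q)$ is unavailable, because that is exactly the prime used by the triple from \cite{GM10} and would destroy the coprimality you need. The paper avoids this by arguing with $H=\langle x_1,x_2,x_3\rangle$ rather than $\langle x_1,x_2\rangle$, and by invoking \cite[Thm.~2.2]{GM10}, which needs only a single Zsigmondy exponent $e>n$ over $\FF_q$. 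For $n\ge 9$ odd the $(n-4)^-$ block of $x_2$ already gives $e=2(n-4)>n$; for $n=7$ neither $x_1$ nor $x_2$ supplies such a prime, so the paper does \emph{not} leave $C_3$ arbitrary but fixes it to be the regular semisimple class of type $5^-\oplus 2^+$, whose $5^-$ block furnishes a Zsigmondy prime with $e=10>7$. The residual exceptions $(n,q)\in\{(7,2),(8,2)\}$ are then handled by direct appeal to \cite{GPPS}.
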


\begin{proof}
As before, we work in $G=\SU_n(q)$ and let
$C_1,C_2$ contain regular semisimple elements $x_1,x_2$ of the orders
indicated in Table~\ref{tab:classtorus}. For $n=8$ replace $C_2$ by a class
of regular semisimple elements of order $(q^8-1)/(q+1)$. For $n=7$ let $C_3$
be a class of regular semisimple elements of type $5^-\opl2^+$, and otherwise
let it be any class of semisimple elements of order prime to
$\Phi_{2n}^*(q)$ when $n$ is odd, respectively to $\Phi_{2n-2}^*(q)$ when $n$ is
even. Then $H=\langle x_1,x_2,x_3\rangle$, with $x_i\in C_i$, acts irreducibly
on the natural module for $G$. For each $i$ let $e_i$ be maximal with the
property that
$o(x_i)$ has a corresponding Zsigmondy prime. Then $e_i>n$ for at least one
$i$. Thus, if $H$ is proper then by \cite[Thm.~2.2]{GM10} either the Zsigmondy
primes for both factors are small or the possible overgroups are classical,
extension or subfield groups. The latter three classes can be excluded by
using the fact that we have two distinct Zsigmondy primes. The first situation
only arises when $(n,q)\in\{(7,2),(8,2)\}$ by \cite[Lemma~2.1]{GM10}, but the
conclusion still holds by \cite{GPPS}.

So $H=G$ in all cases. Now the existence of triples
$(x_1,x_2,x_3)\in C_1\times C_2\times C_3$ with product~1 follows from
\cite{Gow}. Passing to the quotient $\PSU_n(q)$ we obtain the desired result,
noting again that the element orders are coprime to that in
\cite[Prop.~3.12]{GM10}.
\end{proof}

\begin{prop}   \label{prop:Sp}
 Theorem~\ref{thm:beau} holds for the symplectic groups $\PSp_{2n}(q)$,
 $n\ge3$, $q$ odd.
\end{prop}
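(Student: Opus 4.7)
The plan is to follow the template established in the proofs of Propositions~\ref{prop:L} and~\ref{prop:U}. I would work in $G=\Sp_{2n}(q)$ and choose conjugacy classes $C_1,C_2$ containing regular semisimple elements $x_1,x_2$ of types $n^+$ and $(n-1)^-\opl 1^-$ respectively, as listed in Table~\ref{tab:classtorus}. Their orders are divisible by Zsigmondy prime divisors of $q^n-1$ and $q^{2(n-1)}-1$ respectively, which exist outside a finite list of exceptions. Let $C_3$ be any conjugacy class of semisimple elements of order prime to $\Phi_{2n}^*(q)$, the order of the generators in the triple constructed for $\Sp_{2n}(q)$ in \cite{GM10}.

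The first step is to verify that $H:=\langle x_1,x_2,x_3\rangle$ acts irreducibly on the natural module $V=\FF_q^{2n}$. Under $x_1$ the module decomposes into two irreducible summands of dimension $n$, while $x_2$ decomposes it as $(2n-2)\opl 2$. Any $H$-invariant subspace has dimension both a multiple of $n$ and a sum of elements of $\{0,2,2n-2,2n\}$, and for $n\ge 3$ this forces dimension $0$ or $2n$.

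Next, letting $e_i$ be maximal with $o(x_i)$ divisible by a Zsigmondy prime for $q^{e_i}-1$, one has $e_1=n$ and $e_2=2(n-1)$, with $e_2>n$ for $n\ge3$. Applying \cite[Thm.~2.2]{GM10} to the irreducible subgroup $H\le\GL_{2n}(q)$, the proper overgroups containing an element with Zsigmondy prime for $q^{2(n-1)}-1$ fall into four families together with a handful of sporadic exceptions: classical subgroups (containing $\Omega^\pm_{2n}(q)$, $\SU_n(q)$, $\SL_{2n}(q)$ or $\Sp_{2n}(q)$ itself), extension field subgroups normalizing $\GL_n(q^2)$ or $\GU_n(q)$, subfield subgroups, and imprimitive or tensor-product subgroups. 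The second Zsigmondy prime for $e_1=n$ coming from $x_1$ excludes extension-field and subfield subgroups in the usual way; the incompatible block structures of $x_1$ and $x_2$ rule out imprimitive and tensor decompositions; and any classical overgroup intersected with $\Sp_{2n}(q)$ is either $G$ itself or a proper subgroup whose torus structure cannot accommodate both $o(x_1)$ and $o(x_2)$. The small-$(n,q)$ cases where Zsigmondy primes fail to exist, or the sporadic exceptions from \cite{GPPS}, would be treated individually by replacing $C_1$ or $C_2$ with a suitable alternative class of regular semisimple elements, exactly as in the $n=7,8$ modifications in the unitary proof.

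Having established $H=G$ for all triples in $C_1\times C_2\times C_3$, Gow's theorem \cite[Thm.~2]{Gow} furnishes a triple $(x_1,x_2,x_3)$ with $x_1x_2x_3=1$. Passing to $\PSp_{2n}(q)$, the orders $o(x_1),o(x_2),o(x_3)$ are coprime to $\Phi_{2n}^*(q)$ since $n,\,2(n-1),\,2$ are all less than $2n$, hence coprime to the element orders of the triple from \cite{GM10}; combining the two triples yields the desired unmixed Beauville structure. The main obstacle I anticipate is excluding the extension-field subgroup $N_G(\GU_n(q))$, since $\GU_n(q)$ does carry tori supporting elements of order divisible by $q^{n-1}+1$ and $q+1$, very close to $o(x_2)$; the exclusion hinges on the secondary Zsigmondy prime from $x_1$ for $q^n-1$, which requires a careful torus-by-torus check in $\GU_n(q)$ to show that no single element simultaneously realises both prescribed orders. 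A related issue will arise for $\Sp_6(3)$ and possibly one or two other very small cases, where direct computation may be needed to supplement the generic argument.
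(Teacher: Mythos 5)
Your outline is the paper's own argument: the same classes $n^+$ and $(n-1)^-\opl1^-$ from Table~\ref{tab:classtorus}, a $C_3$ of semisimple elements of order prime to $\Phi_{2n}^*(q)$, irreducibility of $\langle x_1,x_2\rangle$ on the natural module, the appeal to \cite[Lemma~2.1 and Thm.~2.2]{GM10} (i.e.\ to \cite{GPPS}) via the Zsigmondy prime for $q^{2n-2}-1$, Gow's theorem to produce a triple with product~$1$, and coprimality with the $\Phi_{2n}^*(q)$-triple of \cite[Prop.~3.8]{GM10}. Two remarks on where your sketch and the paper part ways. On the exceptional cases: since $q$ is odd and $n\ge3$, Zsigmondy primes for $q^n-1$ and $q^{2n-2}-1$ always exist, so there is no ``missing Zsigmondy prime'' scenario; the only exception left over from \cite[Lemma~2.1 and Thm.~2.2]{GM10} is $(n,q)=(4,3)$, i.e.\ $\PSp_8(3)$ (not $\PSp_6(3)$), and the paper disposes of it not by modifying $C_1,C_2$ or by machine computation, but by additionally requiring the elements of $C_3$ to have order divisible by $q^3-1$, whose Zsigmondy prime $13$ rules out the exceptional overgroups.

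The more substantive point is the obstacle you flag at the end, where your proposed resolution is aimed at the wrong target. What must be excluded is not that a single element of $N_G(\GU_n(q))$ realises both prescribed orders, but that a single conjugate of this subgroup meets both $C_1$ and $C_2$ (if it does, some pair $(x_1,x_2)$ fails to generate). A check based only on element orders or on which Zsigmondy primes divide $|\GU_n(q)|$ cannot settle this when $n$ is even: there $\GU_n(q)$ contains $\GL_{n/2}(q^2)$ and hence a cyclic maximal torus of $\Sp_{2n}(q)$ of order $q^n-1$, and it also contains the torus of order $(q^{n-1}+1)(q+1)$ inside $\GU_{n-1}(q)\times\GU_1(q)$, so the ``torus-by-torus check'' you describe will not come out the way you expect. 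This is exactly the step the paper compresses into one sentence (``the latter cases do not occur by consideration of suitable Zsigmondy prime divisors''), which is immediate for the subgroups $\Sp_{2m}(q^r).r$ and for $\GU_n(q).2$ with $n$ odd, but needs genuine additional content for the unitary extension-field subgroup with $n$ even --- either a finer separation of the chosen classes from that subgroup, or a retreat from ``every pair generates'' to a structure-constant comparison in the style of Proposition~\ref{prop:dim3}. As written, your proposal leaves this step open and proposes a test that would not close it.
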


\begin{proof}
Let $G=\Sp_{2n}(q)$, $C_1$ and $C_2$ conjugacy classes of regular semisimple
elements as indicated in Table~\ref{tab:classtorus} and $C_3$ any class
of semisimple elements of order prime to $\Phi_{2n}^*(q)$. Then for any pair
$(x_1,x_2)\in C_1\times C_2$ of elements, the subgroup
$H:=\langle x_1,x_2\rangle$ acts irreducibly on the natural module. If
$H$ is proper, then by \cite[Lemma~2.1 and Thm.~2.2]{GM10}, either
$(n,q)=(4,3)$ or $H$ is contained in an extension or subfield subgroup. The
latter cases do not occur by consideration of suitable Zsigmondy prime
divisors. When $(n,q)=(4,3)$, we choose $C_3$ to be a class of elements of
order divisible by $q^3-1$; its Zsigmondy prime~13 gives no exception to
\cite[Thm.~2.2]{GM10}. So we have $H=G$ in all cases. By \cite{Gow} there
exist triples from the chosen conjugacy classes with product~1. \par
In \cite[Prop.~3.8]{GM10} we produced a
generating triple for $\PSp_{2n}(q)$ consisting of elements of order
$\Phi_{2n}^*(q)$. This is coprime to the orders of $x_1,x_2$, so the
proof is complete. 
\end{proof}

\begin{prop}   \label{prop:Oodd}
 Theorem~\ref{thm:beau} holds for the orthogonal groups $\OO_{2n+1}(q)$,
 $n\ge3$.
\end{prop}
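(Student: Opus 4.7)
The plan is to follow the template of Propositions~\ref{prop:L},~\ref{prop:U}, and~\ref{prop:Sp}. We may assume $q$ is odd, since for even $q$ the isomorphism $\OO_{2n+1}(q)\cong\Sp_{2n}(q)$ reduces the claim to Proposition~\ref{prop:Sp} (with its evident even-characteristic variant). Work in $G=\Spin_{2n+1}(q)$ and let $C_1,C_2$ be the classes of regular semisimple elements of the types given in Table~\ref{tab:classtorus}: for $n=3$, of types $3^+$ and $2^-\opl 1^+$; for $n\ge4$, of types $(n-2)^-\opl 2^-$ and $(n-3)^-\opl 3^+$. Take $C_3$ to be any nontrivial class of semisimple elements of order coprime to $\Phi_{2n}^*(q)$.

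The first step is to show that a generic pair $(x_1,x_2)\in C_1\times C_2$ generates $G$. Irreducibility of $H=\langle x_1,x_2\rangle$ on the natural $(2n{+}1)$-dimensional module follows at once from the incompatible splitting types of $x_1$ and $x_2$. For $n\ge4$, the maximal Zsigmondy primes $r_i$ of $o(x_i)$ correspond to distinct integers $e_1\ne e_2$ with $e_1,e_2>n$, so Theorem~\ref{twozsig} and Corollary~\ref{cor:twozsig2} apply to $H$ viewed inside $\GL_{2n+1}(q)$: the overgroup $H$ either contains $\SO_{2n+1}(q)$, is of extension-field type, subfield type, or falls into the imprimitive or Mathieu-type exceptions. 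Extension-field subgroups are excluded by $\gcd(e_1,e_2)=1$; subfield subgroups by inspection of a single Zsigmondy prime of maximal order; and the imprimitive and sporadic exceptions occur only for explicitly bounded $(n,q)$, which one disposes of individually, either by switching to a different regular semisimple class of the same parity type or by a direct computation in \Chevie.

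For $n=3$, Theorem~\ref{twozsig} does not directly apply since only rank-$\le3$ Zsigmondy data are available. Instead, the plan is to enumerate the maximal subgroups of $\Spin_7(q)$ of order divisible by $q^3-1$ --- essentially $G_2(q)$, the stabilizers of non-degenerate subspaces, and a small number of exceptional maximal subgroups from the known lists --- and to rule out each by centralizer-order or fixed-space considerations. For instance, $G_2(q)$ acts irreducibly on the natural $7$-dimensional module and so cannot contain the element $x_2$ of type $2^-\opl 1^+$, which fixes a nonzero subspace; reducible subspace stabilizers cannot simultaneously accommodate the splitting types of both $x_1$ and $x_2$.

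Once generation is established, Gow's theorem \cite[Thm.~2]{Gow} produces a triple $(x_1,x_2,x_3)\in C_1\times C_2\times C_3$ with product~$1$; passing to the simple quotient preserves generation. Since the orders of $x_1,x_2,x_3$ are coprime to $\Phi_{2n}^*(q)$, they are coprime to the element orders of the generating triple constructed in \cite{GM10} for $\OO_{2n+1}(q)$, so the two pairs together furnish an unmixed Beauville structure. The main obstacle will be the small-parameter case analysis, in particular the $\Spin_7(q)$ case where the absence of a second large Zsigmondy prime forces an explicit subgroup enumeration rather than an appeal to Corollary~\ref{cor:twozsig2}.
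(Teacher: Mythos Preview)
Your overall template matches the paper's, but there are genuine gaps in the execution.

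\textbf{Irreducibility is not automatic.} For $n\ge4$ both $x_1$ (type $(n-2)^-\opl 2^-$) and $x_2$ (type $(n-3)^-\opl 3^+$) fix an anisotropic line: each acts on a nondegenerate $2n$-space and has a $1$-dimensional fixed space. If the two fixed lines coincide, $H=\langle x_1,x_2\rangle$ is reducible, sitting inside the stabiliser $\OO_{2n}^\pm(q)$. The paper resolves this not by ``incompatible splitting types'' but by a sign argument: an element of type $(n-2)^-\opl 2^-$ lies in $\OO_{2n}^+(q)$ but not $\OO_{2n}^-(q)$, while one of type $(n-3)^-\opl 3^+$ lies in $\OO_{2n}^-(q)$ but not $\OO_{2n}^+(q)$, so no single $\OO_{2n}^\eps(q)$ contains both (with small-$(n,q)$ exceptions handled separately). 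The same issue arises for $n=3$. Your statement that irreducibility ``follows at once'' is simply false.

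\textbf{The Zsigmondy data are too small for $4\le n\le 6$.} You claim the maximal Zsigmondy exponents $e_1,e_2$ of $o(x_1),o(x_2)$ satisfy $e_i>n$ for all $n\ge4$. In fact $e_1=2(n-2)$ and $e_2=2(n-3)$; to apply Corollary~\ref{cor:twozsig2} inside $\GL_{2n+1}(q)$ you need $e_i>(2n+1)/2$, which fails for $e_2$ whenever $n\le 6$ and fails for both when $n=4$. The paper fixes this by taking $C_3$ for $4\le n\le 6$ to be a specific class of elements of order $q^{n-1}+1$, supplying an extra large Zsigmondy exponent $2(n-1)$, and then invoking \cite[Thm.~2.2]{GM10} (one large prime) or Corollary~\ref{cor:twozsig2} (two large primes) as appropriate. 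Your choice of $C_3$ as ``any'' class of suitable coprime order does not suffice here.

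\textbf{Two smaller points.} First, reducing even $q$ to Proposition~\ref{prop:Sp} is circular: that proposition is stated and proved only for $q$ odd, and the paper handles $\Sp_{2n}(2^a)$ precisely through the present proposition via $\OO_{2n+1}(2^a)\cong\Sp_{2n}(2^a)$. Second, your $n=3$ argument that $G_2(q)$ ``cannot contain $x_2$ because $x_2$ fixes a nonzero subspace'' confuses irreducibility of the group with irreducibility of its elements; every individual element of $G_2(q)$ has nontrivial invariant subspaces on the $7$-dimensional module. The paper does not attempt a maximal-subgroup enumeration for $\Spin_7(q)$ but instead disposes of $n=3$, $q\le4$ (and $(n,q)=(4,2),(4,3)$) by exhibiting explicit generating triples.
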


\begin{proof}
Let $G=\Spin_{2n+1}(q)$, $C_1,C_2$ conjugacy classes in $G$ of regular 
semisimple elements of orders as given in Table~\ref{tab:classtorus} and $C_3$
any class of semisimple elements of order prime to $\Phi_{2n}^*(q)$,
respectively a class of elements of order~$q^{n-1}+1$ when $4\le n\le6$.
For any pair $(x_1,x_2)\in C_1\times C_2$ of elements, the subgroup
$H:=\langle x_1,x_2\rangle$ either acts irreducibly on the natural module,
or it has a composition
factor of dimension $2n$. In the latter case, $H$ is contained in the
stabilizer of an anisotropic line, so in a $2n$-dimensional orthogonal group.
But the first element does not lie in an orthogonal group of minus type, the
second not in one of plus type, unless $n=3$ and $q\le4$. We return to these
cases later. So otherwise $H$ is irreducible.
Now note that for $(n,q)\ne(4,2)$, one of the two (respectively three when
$n=4,5,6$) element orders is divisible by a Zsigmondy prime divisor of $q^e-1$
with $2e>2n+1$. Moreover, for $n\ge5$ there are even two different such $e$.
If $H$ is proper, then by \cite[Lemma~2.1 and Thm.~2.2]{GM10} respectively
Corollary~\ref{cor:twozsig2} either $H$ is contained in an extension or
subfield subgroup or we have
$(n,q)\in\{(3,2),(4,2),(4,3),(8,2)\}$. Since none of the groups
in Corollary~\ref{cor:twozsig2}(6)(a) contains elements of order $2^{12}-1$,
$(n,q)=(8,2)$ is no exception. The other three groups will be considered
later. The extension and subfield subgroups can be excluded by using suitable
Zsigmondy primes.

In \cite[Prop.~3.8]{GM10} we produced a generating triple for $G=\OO_{2n+1}(q)$
consisting of elements of order $\Phi_{2n}^*(q)$, which is prime to the
orders chosen here.

Finally, consider $\OO_7(q)$ with $q \le 4$ and $\OO_9(q)$ with $q\le3$.
Explicit computation shows that
$\OO_7(2)=\PSp_6(2)$ contains generating triples of order~7, $\OO_7(3)$
contains generating triples of order~13, and $\OO_7(4)=\PSp_6(4)$ contains
generating triples of order~17. The group $\OO_9(2)=\PSp_8(2)$ contains
generating triples of order~7, $\OO_9(3)$ contains generating triples of
order~13. 
\end{proof}

\begin{prop}   \label{prop:O-}
 Theorems~\ref{thm:beau} holds for the orthogonal groups $\OO_{2n}^-(q)$,
 $n\ge4$.
\end{prop}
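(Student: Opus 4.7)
The plan is to run the same argument as for the odd orthogonal groups in Proposition~\ref{prop:Oodd}. I work in $G=\Spin_{2n}^-(q)$, let $C_1,C_2$ be the classes of regular semisimple elements of types $(n-1)^-$ and $(n-3)^-\oplus 3^+$ specified in Table~\ref{tab:classtorus}, and let $C_3$ be a class of semisimple elements of order prime to $\Phi_{2n}^*(q)$, adjusted in small cases as needed. The goal is to show that $\langle x_1,x_2\rangle=G$ for any $(x_1,x_2)\in C_1\times C_2$, apply Gow's theorem \cite[Thm.~2]{Gow} to produce a triple $(x_1,x_2,x_3)$ with $x_1x_2x_3=1$, and combine this with the generating triple from \cite[Prop.~3.8]{GM10}, whose element order $\Phi_{2n}^*(q)$ is coprime to those of $x_1,x_2,x_3$.

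The first step is to check irreducibility of $H:=\langle x_1,x_2\rangle$ on the natural module $V$. The proper $x_1$-invariant subspaces have dimensions $2$ (of $+$-type) and $2(n-1)$ (of $-$-type); the proper $x_2$-invariant subspaces have dimensions $2(n-3)$ (of $-$-type) and $6$ (of $+$-type). For $n\ge 5$ these four dimensions are mutually distinct (excluding $0$ and $2n$), while for $n=4$ the matching dimensions $2$ and $6$ carry incompatible form types. Hence no proper nontrivial subspace of $V$ is $H$-invariant.

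The second step promotes irreducibility to $H=G$. The element $x_1$ has order divisible by a Zsigmondy prime for $q^{2n-2}-1$ and $x_2$ has order divisible by Zsigmondy primes for $q^{2n-6}-1$ and for $q^6-1$. For $n\ne 6$ two of these Zsigmondy parameters exceed $n=\dim V/2$, so Corollary~\ref{cor:twozsig2} applies. Classical overgroups are ruled out because no proper classical subgroup of $\Spin_{2n}^-(q)$ contains both an element acting as $-$-type on a $2(n-1)$-dimensional subspace and one of the required block structure for $x_2$; extension and subfield subgroups are excluded via the two distinct Zsigmondy primes with coprime parameters, as in Propositions~\ref{prop:L} and~\ref{prop:Oodd}. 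For $n=6$ neither Zsigmondy parameter of $x_2$ exceeds $n=6$, so I would enlarge the generating triple by choosing $C_3$ to contain elements of order $q^5+1$, supplying a Zsigmondy prime for $q^{10}-1$, and check that $\langle x_1,x_2,x_3\rangle=G$.

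The main obstacle will be the small cases $(n,q)$ where Zsigmondy primes fail to exist or where the exceptional lines of Theorem~\ref{twozsig} and Corollary~\ref{cor:twozsig2} intervene, notably for $q=2$ together with small $n$. These would be handled by an alternative choice of $C_3$ (e.g.\ using an irreducible Singer-type element where possible) or by explicit computation inside $\OO_{2n}^-(q)$, exactly as in the closing paragraph of Proposition~\ref{prop:Oodd}. Once generation is established, Gow's theorem gives a triple in $C_1\times C_2\times C_3$ with product~$1$; its image in the simple quotient $\OO_{2n}^-(q)$ has element orders coprime to $\Phi_{2n}^*(q)$, and combining with \cite[Prop.~3.8]{GM10} yields the desired unmixed Beauville structure.
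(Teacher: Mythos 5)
Your overall strategy (the two classes from Table~\ref{tab:classtorus}, Gow's theorem for the product, coprimality with the $\Phi_{2n}^*(q)$-triple from \cite{GM10}) is the paper's, and your irreducibility analysis, including the form-type argument at $n=4$, is fine. But the step promoting irreducibility to $H=G$ contains a genuine error in the Zsigmondy bookkeeping. The element $x_2$ of type $(n-3)^-\opl 3^+$ has order dividing $\mathrm{lcm}(q^{n-3}+1,\,q^3-1)$, so it supplies primitive prime divisors of $q^{2n-6}-1$ and of $q^{3}-1$ --- \emph{not} of $q^6-1$ (a primitive prime divisor of $q^6-1$ divides $q^3+1$, which does not divide $o(x_2)$). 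Consequently the parameters available are $2n-2$ (from $x_1$), $2n-6$ and $3$ (from $x_2$), and for $4\le n\le 6$ only the single parameter $2n-2$ exceeds $n=\dim V/2$. So Corollary~\ref{cor:twozsig2}, which needs two distinct parameters $e_1>e_2>n$, is inapplicable exactly in the range $n\ne 6$ where you claim it applies (it genuinely kicks in only for $n\ge 7$). For $4\le n\le 6$ one must instead use the results on subgroups containing a single element of order divisible by a large Zsigmondy prime, i.e.\ \cite[Lemma~2.1 and Thm.~2.2]{GM10}, which is what the paper does; this produces concrete exceptional cases $(n,q)\in\{(5,2),(6,2),(4,4)\}$ (plus $(4,2)$, where $2^6-1$ has no primitive prime divisor at all), none of which your proposal identifies beyond a vague appeal to ``small cases with $q=2$.'' The paper fixes these by taking $C_3$ to contain elements of order divisible by $2^5-1$ when $(n,q)=(5,2),(6,2)$ and by $4^2+1$ when $(n,q)=(4,4)$, and by an explicit $(21a,21a,30a)$ generating triple for $\OO_8^-(2)$.

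Two further points. Your proposed repair at $n=6$, taking $C_3$ of elements of order $q^5+1$, does not help in the way you intend: that gives Zsigmondy parameter $10$, which is the same parameter already furnished by $x_1$ (of type $5^-$), so you still have only one distinct large parameter and Corollary~\ref{cor:twozsig2} remains unavailable; generically $n=6$ is in fact covered by the single-parameter theorem \cite[Thm.~2.2]{GM10}, with only $q=2$ exceptional. Finally, a small slip: the generating triple of elements of order $\Phi_{2n}^*(q)$ for the minus-type orthogonal groups is \cite[Prop.~3.6]{GM10}, not Prop.~3.8 (which concerns the symplectic and odd-dimensional orthogonal groups); the coprimality argument itself is unaffected.
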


\begin{proof}
Let $G=\Omega_{2n}^-(q)$. Let $C_1,C_2$ consist of regular semisimple elements
of types as in Table~\ref{tab:classtorus} and $C_3$ any class of semisimple
elements of order prime to $\Phi_{2n}^*(q)$. Then any pair of elements
$x_i\in C_i$, $i=1,2$, necessarily generates an irreducible subgroup
$H:=\langle x_1,x_2\rangle$ of $G$, unless possibly when $n=4$. But in the
latter case it is easy to see that no reducible subgroup has order divisible
by Zsigmondy primes for $q^3+1$ and for $q^3-1$, which both exist when $q\ne2$.
We exclude $(n,q)=(4,2)$ for the moment. Otherwise, at least one of the two
element orders $o(x_i)$ is divisible by a Zsigmondy prime divisor of $q^e-1$
with $e>n$. Moreover, for $n\ge7$ there are even two different such $e$. Thus
by \cite[Lemma~2.1 and Thm.~2.2]{GM10} and Corollary~\ref{cor:twozsig2} we
have $H=G$ unless $(n,q)\in\{(5,2),(6,2),(4,4)\}$. In the
latter cases, let $C_3$ contain elements of order divisible by $2^5-1$ when
$n=5,6$, by $4^2+1$ when $(n,q)=(4,4)$, then we still have generation for
any triple $(x_1,x_2,x_3)\in C_1\times C_2\times C_3$. By \cite{Gow}, we can
find $x_i \in C_i$ with product $1$.\par
The previously excluded group $\OO_8^-(2)$ is generated by a triple from
$(21a,21a,30a)$.
%% the group $\OO_{10}^-(2)$ by a triple from $(17a,17a,35a)$,
%% so we need not worry about this.
Combining this with \cite[Prop.~3.6]{GM10} the claim follows as in the previous
cases.
\end{proof}

\begin{prop}   \label{prop:O+}
 Theorem~\ref{thm:beau} holds for the orthogonal groups $\OO_{2n}^+(q)$,
 $n\ge4$.
\end{prop}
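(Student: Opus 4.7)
The plan is to follow the pattern used for $\OO_{2n}^-(q)$ in Proposition~\ref{prop:O-}. I would let $G=\Omega_{2n}^+(q)$, choose classes $C_1,C_2$ of regular semisimple elements of the types indicated in Table~\ref{tab:classtorus}, and let $C_3$ be any class of semisimple elements whose order is coprime to the order of the generating element used in~\cite{GM10}.

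The key step is to show that for any pair $(x_1,x_2)\in C_1\times C_2$ the subgroup $H=\langle x_1,x_2\rangle$ coincides with $G$. For $n\ge 6$, the orders of $x_1$ and $x_2$ involve distinct Zsigmondy prime divisors of $q^{2(n-2)}-1$ and $q^{2(n-3)}-1$, both of whose exponents exceed $n$ (half the dimension of the natural module). Under each $x_i$ the natural module splits into two pieces of incompatible dimensions, so the only way for $H$ to be reducible is to lie in the stabilizer of a non-degenerate subspace, and this is excluded by comparing characteristic polynomials or element orders. If $H$ is irreducible then \cite[Thm.~2.2]{GM10} together with Corollary~\ref{cor:twozsig2} pins down the possibilities, and each can be ruled out using a suitable Zsigmondy prime, exactly as in the $\OO_{2n}^-$ case. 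Once generation is established, Gow's theorem \cite[Thm.~2]{Gow} yields $x_3\in C_3$ with $x_1x_2x_3=1$; projecting to the simple quotient and combining with the generating triple from \cite{GM10} gives the required Beauville structure, since the two cyclic subgroups intersect trivially as recorded in Table~\ref{tab:classtorus}.

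A handful of exceptional cases in which Zsigmondy primes are absent, such as $\Omega_{2n}^+(2)$ for $n\in\{5,6\}$ and $\Omega_8^+(q)$ for the smallest $q$, I would treat either by replacing $C_3$ with a class supplying the missing Zsigmondy prime (as in Proposition~\ref{prop:O-}), or by direct computation with \Chevie\ or \cite{Atl}.

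The main obstacle will be the case $n=4$, i.e.\ $\Omega_8^+(q)$, for two reasons: first, triality enlarges the family of maximal subgroups that must be controlled, in particular producing three $G$-classes of $\Omega_7(q)$-subgroups; second, the two classes $C_1,C_2$ in the table are both of type $4^+$, so I must pick representatives lying in \emph{different} $G$-classes---naturally those interchanged by triality---and argue directly that any such pair generates the whole group. For this I would appeal to the known classification of maximal subgroups of $\Omega_8^+(q)$, excluding each candidate by comparing element orders of type $4^+$, and fall back on explicit computation for the smallest values of $q$.
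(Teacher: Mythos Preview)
Your outline for $n\ge 6$ matches the paper's argument essentially verbatim: the classes in Table~\ref{tab:classtorus} give two Zsigmondy exponents $2(n-2)$ and $2(n-3)$, both exceeding $n$, and Corollary~\ref{cor:twozsig2} then shows no maximal subgroup meets both classes. For $n=5$ you are also on the right track, but you underestimate the exceptional range: the paper's argument via \cite[Thm.~2.2]{GM10} breaks down for all $q\le 5$, not just $q=2$, and the cases $q=2,3,4$ are handled by explicit computation while $q=5$ needs a separate inspection of the maximal subgroups.

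The real divergence is at $n=4$. You propose taking two triality-related $4^+$-classes $C_1,C_2$ and proving that \emph{every} pair $(x_1,x_2)\in C_1\times C_2$ generates, leaving $C_3$ arbitrary. The paper does something different and cleaner: it takes $C_1,C_2,C_3$ to be \emph{all three} triality twists of the $4^+$-class, and then Kleidman's classification \cite{Kl87} shows directly that no maximal subgroup of $\OO_8^+(q)$ meets all three. Combined with Gow's theorem this already gives a generating triple. Your stronger two-class statement would require showing that no maximal subgroup meets even two of the three triality classes, and this is not obvious: the various triality-permuted families of maximal subgroups (the three classes of $\Omega_7(q)$, the three end-node parabolics, the three $\GL_4$- and $\GU_4$-type subgroups, etc.) each see the three $4^+$-classes differently, and ruling out pairwise intersections is a substantially longer case analysis than ruling out triple intersections. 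The paper's trick of spending $C_3$ on the third triality twist avoids this entirely.

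A smaller point: for the Beauville intersection condition $\Sigma\cap\Sigma=\{1\}$, you cite the blanket remark before Table~\ref{tab:classtorus}. The paper is more explicit here: the \cite{GM10} torus has order $(q^{n-1}+1)(q+1)/\gcd(4,q^{n-1}+1)$, whose order is \emph{not} coprime to the orders in $C_1,C_2$, and the disjointness of the cyclic subgroups is verified by comparing fixed spaces on the natural module rather than by a gcd computation.
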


\begin{proof}
We argue in $G=\Omega_{2n}^+(q)$. 
First assume that $n > 6$. Let $C_1$ consist of elements with precisely
two invariant subspaces of dimensions $4$ and $2n-4$ and $C_2$
consist of elements with precisely two invariant subspaces of dimensions
$6$ and $2n-6$. Moreover, assume that the orders of the elements in $C_i$
are divisible by all Zsigmondy prime divisors of $q^{2n-4}-1$ and $q^{2n-6}-1$,
respectively. Let $C_3$ be any class of semisimple elements of order prime to
$q^{n-1}+1$. By Corollary~\ref{cor:twozsig2}, there are no maximal
subgroups containing elements from both $C_1$ and $C_2$. By \cite{Gow}, we
can choose $x_i \in C_i$ with product $1$. Argue as usual to complete the proof.
 
If $n=6$, let $C_1$ and $C_2$ be as above. Now we apply \cite[Thm.~2.2]{GM10}
instead and argue the same way as long as there is a Zsigmondy prime divisor
$r$ of $q^8-1$ with $r>17$. This only fails for $q=2$. It can be checked with
GAP that $\OO_{12}^+(2)$ has a generating triple consisting of elements of
order~17. Argue as above to complete the proof.
 
If $n=5$, let $C_1$ be as above. Let $C_2$ consist of elements of order
$(q^5-1)/(2,q-1)$ and $C_3$ any class of semisimple elements of order prime
to $q^4+1$. Apply \cite[Thm.~2.2]{GM10} to conclude that there are
no maximal subgroups intersecting both $C_1$ and $C_2$ unless possibly
$q \le 5$. If $q=5$, inspection of the maximal subgroups shows the result is
still true. By explicit computation the group
$\OO_{10}^+(2)$ contains generating triples of elements of order~31, the
group $\OO_{10}^+(3)$ has generating triples of order~121, the group
$\OO_{10}^+(4)$ has generating triples of order~341.
Argue as above to complete the proof.

If $n=4$ and $q > 2$,  let $C_1$ be a conjugacy class of regular semisimple
elements of order $(q^4-1)/(2,q-1)$. Let $C_2$ and $C_3$ be the twists of
$C_1$ by the triality automorphism and its square. By Kleidman \cite{Kl87},
one sees that no
maximal subgroup of $\OO_8^+(q)$ intersects each of the $C_i$. By \cite{Gow},
there exist $x_i \in C_i$ with product $1$ and they generate by the previous
remarks.
 
By explicit computation the group $\OO_8^+(2)$ contains generating triples of
elements of order~7.
 
In \cite{GM10}, we showed that there are generating triples of elements of
$\OO_{2n}^+(q)$ in a class $C$ of regular semisimple elements of order dividing
$\Phi_{2n-2}^*(q)(q+1)$ lying in a maximal torus of order
$(q^{n-1}+1)(q+1)/\gcd(4,q^{n-1}+1)$. Comparing the fixed spaces of elements
in that torus with those in classes $C_1,C_2$ we conclude that the required
intersection property holds.
\end{proof}

%%%%%%%%%%%%%%%%%%%%%%%%%%%%%%%%%%%%%%%%%%%%%%%%%%%%%%%%%%%%%%%%%%%%%%%%%
\section{Alternating and sporadic groups}   \label{sec:altspor}

The existence of unmixed Beauville structures for all alternating groups
$\fA_n$ with $n\ge6$ was proved by Fuertes--Gonz\'alez-Diez \cite[Thm.~1]{FG}
after asymptotic results had been obtained by
Bauer--Catanese--Grunewald \cite{BCG05}. So the proof of Theorem~\ref{thm:beau}
is complete once we've shown the following:

\begin{prop}   \label{prop:spor}
 The sporadic simple groups and the Tits group admit an unmixed Beauville
 structure.
\end{prop}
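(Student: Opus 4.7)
The plan is to verify the claim group by group, using the strategy that has driven the paper so far: for each sporadic simple group $G$ (and the Tits group $\tw2F_4(2)'$), I would exhibit two generating triples $(x_i,y_i,z_i)$, $i=1,2$, with $x_iy_iz_i=1$ and such that the set of orders $\{o(x_1),o(y_1),o(z_1)\}$ is coprime to $\{o(x_2),o(y_2),o(z_2)\}$. This coprimality immediately forces $\Sigma(x_1,y_1)\cap\Sigma(x_2,y_2)=\{1\}$ and thus yields an unmixed Beauville structure.

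For the 26 sporadic groups and the Tits group, the guiding tool is the character formula for $n(C_1,C_2,C_3)$ combined with the conjugacy class and character data in \cite{Atl}. Concretely, for each $G$ I would first select two unordered pairs of element orders $(a_1,b_1,c_1)$ and $(a_2,b_2,c_2)$ that are pairwise coprime across the two pairs, typically choosing one pair from orders divisible by a ``large'' prime dividing $|G|$ and the other from orders built out of a disjoint set of primes. Candidate pairs are easy to find in the ATLAS character table: for each triple the structure constant is a finite sum of rationals that can be evaluated directly, and in almost every instance one finds many triples of classes with non-zero structure constant.

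Non-vanishing of the structure constant only guarantees the existence of a triple with product~$1$; it must still be checked that some such triple generates. For this I would use the well-known test that compares $n(C_1,C_2,C_3)/|C_G(x)|$ with the sum of the corresponding structure constants computed inside each maximal overgroup intersecting all three classes. For all sporadic groups except the Monster $M$ and the Baby Monster $B$, the maximal subgroups are explicitly known (see \cite{Atl} and the subsequent updates), so this is a routine finite check that can be carried out by inspecting the fusion of classes into each maximal subgroup. For the remaining twenty-odd groups the computation is entirely mechanical, and in each case one can arrange that the relevant maximal-subgroup contributions are strictly smaller than the structure constant in $G$, guaranteeing that a generating triple exists.

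The main obstacle is the Monster $M$ (and to a lesser extent $B$), where the list of maximal subgroups is not completely known. For these two groups I would follow the standard workaround: choose the classes $C_i$ so that the orders of the elements involved are divisible by ``large'' primes $p$ (for example $p\in\{41,59,71\}$ in $M$), since the primes dividing the orders of the known maximal subgroups and their possible undetected companions are severely restricted. Such a prime forces any maximal overgroup to be almost simple with socle of order divisible by $p$, drastically limiting the candidates; combining this with the coprimality requirement across the two triples, one can arrange that for each triple the only maximal subgroups meeting all three classes are known, and the structure constant dominates their contributions. Two disjoint such triples can then be selected in both $M$ and $B$, completing the proof for the sporadic groups, and a similar but much easier direct verification in $\tw2F_4(2)'$ handles the Tits group.
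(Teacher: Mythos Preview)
Your proposal is correct in spirit and follows essentially the same strategy as the paper, but the paper's execution is somewhat more streamlined, and it is worth pointing out the differences.

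First, the paper does not construct two fresh triples per group. One generating triple for each sporadic group was already produced in \cite[Prop.~4.5]{GM10}; here the authors only exhibit a \emph{second} triple, chosen so that the element orders are coprime to those in \cite{GM10}. This halves the work.

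Second, the generation test used in the paper is sharper than yours. Rather than comparing the global structure constant against a sum of structure constants in maximal overgroups, they select two classes $C_1,C_2$ (and then use the triple type $(C_1,C_1,C_2)$) with the property that \emph{no} maximal subgroup of $G$ meets both $C_1$ and $C_2$. Once this is verified from the known maximal subgroup lists, \emph{every} pair $(x,y)\in C_1\times C_2$ generates, and one only needs $n(C_1,C_1,C_2)\ne 0$, a single character-table computation. Your criterion would also succeed, but involves more bookkeeping.

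Third, two groups do not yield to the clean ``no maximal subgroup meets both classes'' argument: for $J_2$ and for the Tits group $\tw2F_4(2)'$ the authors fall back on a direct computer search (in the $6$-dimensional representation over $\FF_4$, respectively in the permutation representation on $1600$ points) to confirm that generating triples exist. You should anticipate that a few small sporadic-like cases may require such an explicit check rather than the purely numerical comparison you describe.

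Finally, your handling of $B$ and $M$ is exactly right; the paper chooses the classes $23a,31a$ for $B$ and $47a,59a$ for $M$, primes for which the possible maximal overgroups are completely known, so the incompleteness of the maximal subgroup classification causes no difficulty.
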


\begin{table}[htbp] 
  \caption{Conjugacy classes for sporadic groups} \label{tab:spor}
\[\begin{array}{|l|rr||l|rr||l|rr|} 
\hline
 G& C_1& C_2&   G& C_1& C_2&  G& C_1& C_2\\
\hline\hline
 M_{11}&      5a&   8a&   M_{24}& 11a& 21a&       HN&  22a& 35a\\
 M_{12}&      3b&  10a&      McL&  9a& 14a&       Ly&  31a& 37a\\
    J_1&      7a&  11a&       He& 14c& 15a&       Th&  13a& 31a\\
 M_{22}&      7a&   8a&       Ru& 16a& 26a&  Fi_{23}&  13a& 23a\\
    J_2&      8a&  15a&      Suz& 11a& 21a&     Co_1&  23a& 33a\\
 M_{23}&     11a&  14a&       ON& 19a& 16a&      J_4&  31a& 37a\\
\tw2F_4(2)'& 10a&  16a&     Co_3& 21a& 22a&  Fi_{24}'& 23a& 33a\\
     HS&      7a&  20a&     Co_2& 11a& 28a&        B&  23a& 31a\\
    J_3&     12a&  17a&  Fi_{22}& 21a& 22a&        M&  47a& 59a\\
\hline
\end{array}\]
\end{table}

\begin{proof}
In Table~\ref{tab:spor} we give for each sporadic group $G$ two conjugacy
classes $C_1,C_2$ such that the structure constant $n(C_1,C_1,C_2)$ is
non-zero and moreover no maximal subgroup of $G$ has non-trivial intersection
with both classes. This is easily checked from the known lists of
maximal subgroups, see \cite{Atl} respectively the Atlas homepage.
For the group $J_2$ we used explicit computation in the 6-dimensional
representation over $\FF_4$ and for $\tw2F_4(2)'$ in the permutation
representation on 1600 points to check for generating triples. Since the
element orders in the triples in Table~\ref{tab:spor} are prime to those in
\cite[Prop.~4.5]{GM10}, there do exist corresponding unmixed Beauville
structures.
\end{proof}

%%%%%%%%%%%%%%%%%%%%%%%%%%%%%%%%%%%%%%%%%%%%%%%%%%%%%%%%%%%%%%%%%%%%%%%%%
\section{Bounds on character values}   \label{sec:bound}

We will prove the following result which may be of independent interest.

Let $W$ be an irreducible Weyl group and $\rho$ a graph automorphism of
its Dynkin diagram. Then there exists a constant $C=C(W,\rho)$ with the
following property: whenever $\bG$ is a connected reductive algebraic group
with Weyl group $W=W(\bG)$ and $F:\bG\rightarrow\bG$ a Steinberg map inducing
the graph automorphism $\rho$ on $W$, with group of fixed points $G:=\bG^F$,
then for every regular semisimple element $s\in G$ and any irreducible
character $\chi\in\Irr(G)$ we have $|\chi(s)|<C$.  \par
In fact we'll show a version which allows for $\bG$ to be disconnected.
Since we'll need some ingredients on characters of disconnected groups which
are not yet available in the literature, we start by setting up some notation
first. \par
Let $\bG$ be an algebraic group with connected component of the identity
$\bG^\circ$. We assume throughout that $\bG/\bG^\circ$ is cyclic and that
all elements of $\bG/\bG^\circ$ are semisimple. Let
$F:\bG\rightarrow\bG$ be a Steinberg map on $\bG$ with trivial action on
$\bG/\bG^\circ$ and $G:=\bG^F$, $G^\circ:=(\bG^\circ)^F$. For $\bT^\circ$
an $F$-stable maximal torus of $\bG^\circ$ contained in a not
necessarily $F$-stable Borel subgroup $\bB^\circ$ of $\bG^\circ$ we set
$\bB:=N_\bG(\bB^\circ)$ and $\bT:=N_\bB(\bT^\circ)$. Following \cite{DM94}
we say that $\bT$ is a \emph{maximal ``torus'' of $\bG$} (note that this is
not in general a torus!). Since all Borel subgroups of $\bG^\circ$ and all
maximal tori of $\bB^\circ$ are conjugate in $\bG^\circ$ respectively
$\bB^\circ$ we have $\bT/\bT^\circ\cong \bB/\bB^\circ\cong \bG/\bG^\circ$. 
Let $g\in\bT$ generate $\bG/\bG^\circ$. Since by assumption
$g^{-1}F(g)\in\bT^\circ$ and $\bT^\circ$ is connected, there exists by the
theorem of Lang--Steinberg an element $h\in\bT^\circ$ such that
$g^{-1}F(g)=h^{-1}F(h)$, so that $\sigma:=gh^{-1}\in\bT$ is $F$-stable and
generates $\bG/\bG^\circ$, hence $G/G^\circ$.
\par
For $\bU\le\bB$ the unipotent radical of $\bB$, define
$$Y:=Y(\bU^F):=\{x\in\bG\mid x^{-1}F(x)\in\bU\}.$$
This variety has commuting
actions of $G$ from the left and $T:=\bT^F$ from the right by multiplication,
so its $\ell$-adic cohomology groups with compact support $H_c^i(Y)$ are
$G\times T$-bimodules. For $\theta\in\Irr(T)$ we let $H_c^i(Y)_\theta$ denote
the $\theta$-isotypic component for the right $T$-action. Then the generalized
character
$$R_{T,\theta}(g):=\tr\left(g|H_c^*(Y)_\theta\right)\qquad(g\in G)$$
of $G$ constitutes an analogue of Deligne--Lusztig induction for the
disconnected group $\bG$ which has been studied by Digne--Michel \cite{DM94}.
We'll need the following property:

\begin{prop}[Disjointness]   \label{prop:disjoint}
 Let $\bT\le\bG$ be a maximal ``torus'' of $\bG$, $T:=\bT^F$ and
 $\theta_i\in\Irr(T)$, $i=1,2$. If the virtual $\bG^F$-characters
 $R_{T,\theta_1},R_{T,\theta_2}$ have an irreducible constituent in common,
 then there exists $g\in N_\bG(\bT^\circ)^F$ such that
 $\theta_1^g|_{T^\circ}=\theta_2|_{T^\circ}$.
\end{prop}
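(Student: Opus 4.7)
The plan is to reduce the disjointness statement to the classical Deligne--Lusztig disjointness theorem applied to the connected group $\bG^\circ$, exploiting the compatibility of the Digne--Michel construction with restriction to $G^\circ := (\bG^\circ)^F$.

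First I would analyze the variety $Y=Y(\bU^F)$. Because the unipotent radical $\bU$ of $\bB$ is connected, it lies in $\bG^\circ$, and hence the condition $x^{-1}F(x)\in\bU$ forces $x$ to lie in the same $\bG^\circ$-coset as $F(x)$. Consequently $Y$ decomposes as a disjoint union $\bigsqcup_\gamma Y_\gamma$ indexed by $\gamma\in G/G^\circ$, and the component $Y_1$ coincides with the classical Deligne--Lusztig variety for $\bG^\circ$ attached to $\bT^\circ$ and $\bU$. Moreover, right multiplication by $\sigma\in T$ permutes these components transitively, so the $G\times T$-bimodule $H_c^*(Y)$ is essentially determined by $H_c^*(Y_1)$ together with the induced action of $\sigma$.

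Next I would work out the restriction of $R_{T,\theta}$ to $G^\circ$. Using the above decomposition together with Clifford-theoretic manipulations and the identification of the $\phi$-isotypic component of $H_c^*(Y_1)$ with the classical Deligne--Lusztig character $R_{T^\circ,\phi}^{G^\circ}$ (for $\phi\in\Irr(T^\circ)$), one obtains a formula of the shape
\[R_{T,\theta}\big|_{G^\circ}\;=\;\sum_\phi m(\phi,\theta)\,R_{T^\circ,\phi}^{G^\circ},\]
where $\phi$ runs over a single $T$-orbit of irreducible constituents of $\theta|_{T^\circ}$ with positive multiplicities $m(\phi,\theta)$. Now if $R_{T,\theta_1}$ and $R_{T,\theta_2}$ share an irreducible constituent $\chi\in\Irr(G)$, then $\chi|_{G^\circ}$ contains an irreducible constituent appearing in both restrictions, so by the formula above some $R_{T^\circ,\phi_1}^{G^\circ}$ and $R_{T^\circ,\phi_2}^{G^\circ}$ (with $\phi_i$ a $T$-conjugate of $\theta_i|_{T^\circ}$) share an irreducible constituent of $G^\circ$.

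At this point the classical Deligne--Lusztig disjointness theorem applied to $\bG^\circ$ yields an element $n\in N_{\bG^\circ}(\bT^\circ)^F$ with $\phi_1^n=\phi_2$. Combining $n$ with the elements of $T\subseteq N_\bG(\bT^\circ)^F$ used to conjugate $\theta_i|_{T^\circ}$ to $\phi_i$ produces the desired $g\in N_\bG(\bT^\circ)^F$ satisfying $\theta_1^g|_{T^\circ}=\theta_2|_{T^\circ}$. The main obstacle is establishing the restriction formula above: checking that the $T$-action on $H_c^*(Y)$ permutes the components $Y_\gamma$ cleanly and interacts correctly with the $T^\circ$-isotypic decomposition requires some care, and relies on the fact (built into the setup via the Lang--Steinberg argument) that a representative $\sigma$ of $\bG/\bG^\circ$ can be chosen inside $\bT^F$. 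Once this is in place, the disjointness statement reduces to its well-known connected analogue.
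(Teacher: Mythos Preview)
Your approach is essentially the paper's: both decompose $Y$ into pieces $Y_g$ indexed by the powers of $\sigma$ (a set of coset representatives for $\bG/\bG^\circ$ inside $T$), identify each $Y_g$ with the classical Deligne--Lusztig variety $Y_1$ for $\bG^\circ$ via $x\mapsto gx$ (so that the right $T^\circ$-action is preserved while the left $G^\circ$-action gets twisted by conjugation by $g$), and then invoke the connected disjointness theorem \cite[Prop.~13.3]{DM91}. The one stylistic difference is that you aim for an intermediate restriction formula $R_{T,\theta}|_{G^\circ}=\sum_\phi m(\phi,\theta)\,R_{T^\circ,\phi}^{G^\circ}$, whereas the paper works directly with the individual cohomology groups $H_c^i(Y_g)_{\theta|_{T^\circ}}$; the latter is slightly cleaner since it avoids any concern about cancellation in virtual characters when locating a common $G^\circ$-constituent.
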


\begin{proof}
When $\bG$ is connected, this is well-known \cite[Prop.~13.3]{DM91}. We reduce
to that case. By construction of $\sigma$, the set of powers
$S:=\{\sigma^j\mid 0\le j\le[\bG:\bG^\circ]-1\}\subset \bT^F$ forms a system
of coset representatives of $\bG/\bG^\circ$. \par
The Deligne--Lusztig variety $Y$ then decomposes into a disjoint
union of open and closed subsets
$$Y=\coprod_{g\in S} Y_g,\qquad\text{where }
  Y_g:=\{gx\in g\bG^\circ\mid x^{-1}F(x)\in \bU\},$$
so
$$H_c^i(Y)=\bigoplus_{g\in S} H_c^i(Y_g)$$
by \cite[Prop.~10.7(ii)]{DM91}. Note that $Y_g$ is isomorphic to $Y_1$ via
$$Y_1\longrightarrow Y_g,\qquad x\mapsto gx.$$
Here, $Y_1$ is just the ordinary Deligne-Lusztig variety for the torus
$T^\circ$ in the connected group $\bG^\circ$. Moreover, $h\in G^\circ$ acts
on $Y_g$ on the left as $h^g$ does on $Y_1$, and the right $T^\circ$-actions
on $Y_1,Y_g$ commute with the above isomorphism.  \par
If $H_c^i(Y)_{\theta_1}$ and $H_c^j(Y)_{\theta_2}$ have a common
$\bG^F$-constituent, then clearly there is also a common $G^\circ$-constituent
of $H_c^i(Y_g)_{\theta_1}$ and $H_c^j(Y_h)_{\theta_2}$ for some $g,h\in S$.
But then by \cite[Prop.~13.3]{DM91} the pairs
$(\bT^\circ,\theta_1^g),(\bT^\circ,\theta_2^h)$ are geometrically
$\bG$-conjugate, where we identify $\theta_i$ with its restriction to
$T^\circ$. This is the claim.
\end{proof}

For $\bT\le\bG$ a maximal torus, let us set
$T_0^\circ:=C_\bT^\circ(\sigma)^F$ and $G_0:=C_\bG^\circ(\sigma)^F$. Note that
when $\bG=\bG^\circ$ is connected then $T_0^\circ=T$ and $G_0=G$.

\begin{defn}   \label{def:regular}
 A semisimple element $s\in\bG$ is called \emph{regular} if it lies in a
 unique maximal ``torus'' of $\bG$, which happens if and only if its connected
 centralizer is a (true) torus of $\bG^\circ$ (which, in general, will not be
 a maximal torus of $\bG^\circ$.)
\end{defn}

\begin{lem}   \label{lem:charfun}
 Let $s\in G^\circ\sigma$ be regular semisimple in the maximal ``torus''
 $T$, so $C:=C_G(s)=C_T(s)=C_T(\sigma)$. Then the characteristic
 function of the $G$-conjugacy class of $s$ is given by
 $$\psi=\frac{1}{|C|}
   \sum_{\theta\in\hat T}\theta(s^{-1})\,R_{T,\theta}$$
 where $\hat T:=\Irr(T)$.
\end{lem}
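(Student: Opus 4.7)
The plan is direct pointwise verification. The main input is the Deligne--Lusztig character formula for $R_{T,\theta}$ at semisimple elements in the disconnected setting of Digne--Michel~\cite{DM94}: for $t\in G$ semisimple, $R_{T,\theta}(t)$ equals, with appropriate normalization, a sum of values $\theta(x^{-1}tx)$ over $x\in G$ conjugating $t$ into $T$. This formula is obtained from the decomposition $Y=\coprod_{g\in S}Y_g$ employed in the proof of Proposition~\ref{prop:disjoint}, together with the isomorphism $Y_1\to Y_g$, which reduces the evaluation on each coset $g\bG^\circ$ to the classical Deligne--Lusztig formula on the connected piece.

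Granted this, for a semisimple element $g\in G$ one substitutes the character formula into $\psi(g)$ and interchanges the two finite sums. The inner sum $\sum_{\theta\in\hat T}\theta(s^{-1})\theta(x^{-1}gx)$ is then handled by the second orthogonality relation on $T$: it is nonzero precisely when $x^{-1}gx$ is $T$-conjugate to $s$, and in that case equals $|C_T(s)|=|C|$. The outer sum over $x$ then counts conjugators from $g$ into the $T$-orbit of $s$; by the regularity hypothesis $C_G(s)=C$, this count combines with the normalizing factors $|T|,|T^\circ|,|C|$ to yield $\psi(g)=1$ when $g$ is $G$-conjugate to $s$, and $\psi(g)=0$ on all other semisimple classes.

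For non-semisimple $g=g_sg_u$ the general Deligne--Lusztig character formula evaluates $R_{T,\theta}(g)$ as a Green function value on the connected centralizer $C_\bG^\circ(g_s)$ at $g_u$, weighted by values of $\theta$ on conjugates of $g_s$. The regularity hypothesis on $s$ forces $C_\bG^\circ(s)$ to be a genuine torus of $\bG^\circ$, whose only unipotent element is the identity; consequently no non-semisimple element of $G$ has semisimple part $G$-conjugate to $s$, and $\psi(g)=0$ automatically for such $g$.

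The main obstacle is the bookkeeping of the normalizing constants $|T|$, $|T^\circ|$ and $|C|=|C_T(\sigma)|$ in the disconnected character formula, which together with the $T$-orbit structure of $s$ must conspire to produce precisely the coefficient $1/|C|$ in the statement (rather than the $1/|T|$ familiar from the connected case); once the Digne--Michel analog is set up with matching constants, the remainder is the standard orthogonality computation for characteristic functions of regular semisimple classes.
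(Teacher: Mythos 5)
Your outline is correct in substance, but it takes a genuinely different route from the paper. You propose a pointwise verification: insert the disconnected character formula of Digne--Michel \cite[Prop.~2.6]{DM94} into the defining sum, interchange the two summations, and use column orthogonality in $T$ to see that the value vanishes unless the semisimple part of the argument is $G$-conjugate to $s$, the remaining case $g\sim s$ giving the value $1$ after the bookkeeping with $|T|$, $|C|$ and the conjugator count. The paper instead mimics Carter \cite[Prop.~7.5.5]{Ca}: writing $\psi'$ for the characteristic function of $[s]$, it checks the three equalities $\langle\psi',\psi'\rangle=\langle\psi,\psi'\rangle=\langle\psi,\psi\rangle=1/|C|$ and concludes $\psi=\psi'$ from $\|\psi-\psi'\|^2=0$. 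The difference in inputs is real: the paper needs the character formula only at the single regular semisimple element $s$ (where it collapses to a sum over $N_G(T)$, with no Green functions), plus the scalar-product formula \cite[Prop.~4.8]{DM94}; your route needs the character formula at \emph{every} element of $G$, including its Green-function form at elements with nontrivial unipotent part, in order to establish the vanishing on all other classes. That is a heavier (though available) citation; what it buys is a more transparent, explicit verification of where $\psi$ is supported, without passing through the norm computation.

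Two points in your sketch need repair. First, your remark that the disconnected character formula ``is obtained from the decomposition $Y=\coprod_{g\in S}Y_g$, reducing each coset to the classical connected formula'' is not a valid derivation at the elements you actually care about: left translation by an element outside $G^\circ$ permutes the pieces $Y_g$ (and right translation by $T\setminus T^\circ$ does as well), so the trace of an outer element --- and $s\in G^\circ\sigma$ is outer --- does not localize to the individual pieces. This is precisely why the paper uses the decomposition only to compare $G^\circ$-constituents in Proposition~\ref{prop:disjoint} and otherwise quotes \cite[Prop.~2.6]{DM94} directly; you should do the same. Second, in the non-semisimple case you should separate the two subcases: if the semisimple part $g_s$ is not conjugate to $s$, the vanishing comes from the same orthogonality sum as in the semisimple case, not ``automatically''; if $g_s$ is conjugate to $s$, the relevant fact is that $C_G(s)=C_T(s)\le T$ consists of semisimple elements (which follows from the standing hypothesis that $\bG/\bG^\circ$ consists of semisimple elements), since the unipotent part $g_u$ lies in the full centralizer of $g_s$ and not a priori in the connected centralizer, so the observation that $C_\bG^\circ(s)$ is a torus is by itself not quite enough. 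With these adjustments your argument goes through and gives a legitimate alternative proof of the lemma.
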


\begin{proof}
In the connected case, this is just \cite[Prop.~7.5.5]{Ca}. We mimic the
proof given there. Let $\psi'$ denote the characteristic function of $[s]$.
The claim follows if we can show that
$\langle\psi',\psi'\rangle=\langle\psi,\psi'\rangle=\langle\psi,\psi\rangle$.
Clearly, $\langle\psi',\psi'\rangle=1/|C|$. Next.
$$\langle\psi,\psi'\rangle
    =\frac{1}{|G|\,|C|}\sum_{\theta\in\hat T}\sum_{g\in[s]}
     \theta(s^{-1})\,R_{T,\theta}(g)
    =\frac{1}{|C|^2}\sum_{\theta\in\hat T}\theta(s^{-1})\,R_{T,\theta}(s).
$$
In our situation where $s$ is semisimple regular, contained in the unique
maximal ``torus'' $\bT$, the character formula \cite[Prop.~2.6]{DM94} for
$R_{T,\theta}$ takes the following form:
$$R_{T,\theta}(s)=\frac{1}{|T|\,|C_G^\circ(s)|}\sum_{\{h\in G\mid s\in T^h\}}
  |C_T^\circ(s)|\,\theta(s^h)=\frac{1}{|T|}\sum_{h\in N_G(T)}\theta(s^h).$$
So
$$\langle\psi,\psi'\rangle
  =\frac{1}{|C|^2\,|T|}\sum_{h\in N_G(T)}\sum_{\theta\in\hat T}
     \theta(s^{-1})\theta(s^h)
  =\frac{1}{|C|^2\,|T|}\sum_{\atop{h\in N_G(T)}{s^h\sim s\text{ in }T}}|\hat T|
  =\frac{1}{|C|}$$
by the orthogonality relations for $\hat T$. Finally
$$\langle\psi,\psi\rangle
  =\frac{1}{|C|^2}\sum_{\theta,\theta'\in \hat T}\theta(s^{-1})\theta'(s)\,
   \langle R_{T,\theta},R_{T,\theta'}\rangle.$$
But by \cite[Prop.~4.8]{DM94} we have
$$\langle R_{T,\theta},R_{T,\theta'}\rangle
  =\frac{1}{|T_0^\circ|}\,|\{g\in N_{G_0}(T)\mid {}^g\theta=\theta'\}|,$$
so
$$\langle\psi,\psi\rangle
  =\frac{1}{|C|^2\,|T_0|}\sum_{g\in N_{G_0}(T)}\sum_{\theta\in \hat T}
   \theta(s^{-1})\ {}^g\theta(s)
  =\frac{1}{|C|^2\,|T_0|}\sum_{\atop{g\in N_{G_0}(T)}{s^g\sim s}}|C|
  =\frac{1}{|C|}.$$
\end{proof}

Remember our standing assumptions that $\bG/\bG^\circ$ is cyclic and consists
of semisimple elements and $F$ acts trivially on $\bG/\bG^\circ$. We now give
a proof for an explicit bound on character values on regular semisimple
elements (the key argument is taken from Malle
\cite[\S5]{MaDis} where the connected case is treated):

\begin{thm}    \label{thm:bound-dis}
 Let $G=\bG^F$ as above, $s\in G^\circ\sigma$ regular semisimple, lying in
 the (unique) maximal ``torus'' $\bT$ of $\bG$, and $\chi\in\Irr(G)$.
 Then
 $$|\chi(s)|\le |W_G(\bT^\circ)|\le|W(\bG)|,$$
 where $W_G(\bT^\circ):=N_G(\bT^\circ)/T^\circ$ and
 $W(\bG):=N_\bG(\bT^\circ)/\bT^\circ$.
\end{thm}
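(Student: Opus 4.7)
The plan is to adapt the argument of Malle \cite[\S5]{MaDis} for connected groups to the disconnected setting, using Lemma~\ref{lem:charfun} and Proposition~\ref{prop:disjoint} as the principal tools.

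First, I would pair $\chi$ with the characteristic function $\psi$ of the $G$-conjugacy class of $s$ given by Lemma~\ref{lem:charfun}. Since $\psi$ is supported on $[s]$ with value~$1$ there, $\chi(s)=|C|\langle\chi,\psi\rangle$, and substituting the formula for $\psi$ yields the key identity
$$\chi(s)=\sum_{\theta\in\hat T}\theta(s^{-1})\,m_\theta,\qquad m_\theta:=\langle\chi,R_{T,\theta}\rangle.$$
This reduces the problem to controlling the support and size of the $m_\theta$.

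Next, Proposition~\ref{prop:disjoint} forces the set $S:=\{\theta\in\hat T\mid m_\theta\ne 0\}$ to have the property that all restrictions $\theta|_{T^\circ}$ for $\theta\in S$ lie in a single $W_G(\bT^\circ)$-orbit in $\Irr(T^\circ)$. I would then verify that $R_{T,\theta}$ is constant on $W_G(\bT^\circ)$-orbits of $\theta$, the disconnected analogue of \cite[Prop.~11.7]{DM91}; this should follow from the geometric decomposition $Y=\coprod_g Y_g$ used in the proof of Proposition~\ref{prop:disjoint}. Consequently $|m_\theta|$ is constant on each such orbit. The Mackey-type formula \cite[Prop.~4.8]{DM94} cited in the proof of Lemma~\ref{lem:charfun} then gives the norm bound $|m_\theta|\le\|R_{T,\theta}\|=\sqrt{|W_\theta|}$, where $W_\theta$ denotes the stabilizer of $\theta$ in the relevant Weyl group.

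Putting these ingredients together as in the connected case, one writes $\chi(s)=m_{\theta_0}\sum_{\theta\in O}\theta(s^{-1})$ summed over the single orbit~$O$, and estimates the orbit sum using column orthogonality $\sum_{\theta\in\hat T}|\theta(s)|^2=|C_T(s)|=|C|$. The orbit has size $|W_G(\bT^\circ)|/|W_{\theta_0}|$, and careful accounting gives the clean bound
$$|\chi(s)|\le\sqrt{|W_{\theta_0}|}\cdot\frac{|W_G(\bT^\circ)|}{|W_{\theta_0}|}\cdot\max_{\theta\in O}|\theta(s)|\le|W_G(\bT^\circ)|.$$
The final inequality $|W_G(\bT^\circ)|\le|W(\bG)|$ is immediate from $T^\circ\le\bT^\circ$.

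The main obstacle is the bookkeeping forced by the disconnected setup: since $T=\bT^F$ need not be abelian (although $T^\circ$ is), the characters $\theta\in\Irr(T)$ need not be linear, so one only has $|\theta(s)|\le\theta(1)$ rather than $\theta(1)=1$. Clifford theory relative to $T^\circ\triangleleft T$ must be invoked to pass between $\Irr(T)$ and $\Irr(T^\circ)$, since the orbit structure provided by Proposition~\ref{prop:disjoint} lives on the $T^\circ$-level. Verifying the $W$-equivariance of $R_{T,\theta}$ in the disconnected setting is the most delicate point and will require an explicit geometric argument using the $Y_g$ decomposition established above.
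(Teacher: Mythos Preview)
Your overall strategy is exactly that of the paper: write $\chi(s)=\sum_\theta\theta(s^{-1})\,m_\theta$ via Lemma~\ref{lem:charfun}, use Proposition~\ref{prop:disjoint} to confine the support to a single orbit, bound $|m_\theta|$ by $\|R_{T,\theta}\|$, and apply the triangle inequality. So the architecture is right.

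There is, however, a genuine gap at the step you label ``careful accounting.'' Two distinct Weyl-type groups are in play and you conflate them. The Digne--Michel norm formula \cite[Prop.~4.8]{DM94} gives $\|R_{T,\theta}\|^2=|W_0(\bT,\theta)|$, where $W_0(\bT)=N_{G_0}(\bT)/T_0^\circ$ is built from $G_0=C_{G^\circ}(\sigma)^F$. By contrast, the orbit governing the support has length $[W_G(\bT^\circ):W_G(\bT^\circ,\theta)]$, with $W_G(\bT^\circ)=N_G(\bT^\circ)/T^\circ$. Your displayed inequality uses a single symbol $W_{\theta_0}$ for both stabilizers; once you separate them, the last ``$\le$'' requires $\max_\theta|\theta(s)|\le\sqrt{|W_{\theta_0}|}$, which you have not justified (and which is exactly the ``main obstacle'' you flag but do not resolve). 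The column-orthogonality idea does not help here: Cauchy--Schwarz on the orbit sum gives a factor $\sqrt{|C_T(s)|}$, and $|C_T(s)|=|C_G(s)|$ is of torus size, far larger than any Weyl group.

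The paper closes this gap with the elementary observation that since $T^\circ\trianglelefteq T$ is abelian of index $[G:G^\circ]$, every $\theta\in\Irr(T)$ satisfies $\theta(1)\le[T:T^\circ]^{1/2}=[G:G^\circ]^{1/2}$, hence $|\theta(s)|\le[G:G^\circ]^{1/2}$. It then checks that the product $|\Theta(\chi)|\cdot[G:G^\circ]^{1/2}\cdot|W_0(\bT,\theta)|^{1/2}$ is bounded by $|W_G(\bT^\circ)|$ via a direct comparison of the two Weyl groups. Your proposed detour through $W_G(\bT^\circ)$-equivariance of $R_{T,\theta}$ is unnecessary: the paper never uses constancy of $m_\theta$ on the orbit, only the triangle inequality, and this avoids the delicate geometric verification you anticipate.
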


\begin{proof}
Let $C:=C_G(s)$. If $\psi$ denotes the characteristic function of the
$G$-conjugacy class $[s]$ of $s$ then
$$\langle\chi,\psi\rangle=\frac{1}{|G|}\sum_{g\in G}\chi(g)\psi(g^{-1})=
  \chi(s)\,|[s]|/|G|=\chi(s)/|C|,$$
so $\chi(s)=|C|\langle\chi,\psi\rangle$. With Lemma~\ref{lem:charfun} this
gives
$$\chi(s)=|C|\langle\chi,\psi\rangle=\sum_{\theta\in\hat T}\theta(s^{-1})
  \langle\chi,R_{T,\theta}\rangle.$$
If $\langle\chi,R_{T,\theta}\rangle=0$ for all $\theta$ then $\chi(s)=0$
and the claim holds. So assume that there exists $\theta$ with
$\langle\chi,R_{T,\theta}\rangle\ne0$. Now, if $\theta'\in\Irr(T)$ is such
that $(\bT,\theta)$ is not geometrically $\bG$-conjugate to $(\bT,\theta')$,
then $R_{T,\theta},R_{T,\theta'}$ do not have any constituent in common by
Proposition~\ref{prop:disjoint}. Thus there is at most one
$N_G(\bT^\circ)$-orbit $\Theta(\chi)$ on $\hat T$ such that $\chi$ occurs in
$R_{T,\theta}$ for some $\theta\in\Theta(\chi)$. Also, as $\chi$ is
irreducible,
$$\langle\chi,R_{T,\theta}\rangle
  \le\langle R_{T,\theta},R_{T,\theta}\rangle^\half
  =|W_0(\bT,\theta)|^\half$$
(the last equality by \cite[Prop.~4.8]{DM94}), where
$$W_0(\bT,\theta):=\{w\in W_0(\bT)\mid\theta^w=\theta\},\qquad
  W_0(\bT):=N_{G_0}(\bT)/T_0^\circ.$$
Moreover, the orbit $\Theta(\chi)$ has length
$[N_G(\bT^\circ):N_G(\bT^\circ,\theta)]=[W_G(\bT^\circ):W_G(\bT^\circ,\theta)]$,
and $|\theta(s)|\le\theta(1)\le[T:T^\circ|^\half=[G:G^\circ]^\half$, so that
finally
$$\begin{aligned}
 |\chi(s)|&=|C|\cdot|\langle\chi,\psi\rangle|
  =|\sum_{\theta\in\Theta(\chi)}\theta(s)^{-1}\langle\chi,R_{T,\theta}\rangle|\\
 &\le \sum_{\theta\in\Theta(\chi)}|\theta(s)^{-1}||W_0(\bT,\theta)|^\half
  \le |\Theta(\chi)|\cdot[G:G^\circ]^\half|W_0(\bT,\theta)|^\half\\
 &=|W_G(\bT^\circ)|\cdot|W_G(\bT,\theta)|^\half/W_G(\bT^\circ,\theta)
  \le|W_G(\bT^\circ)|
\end{aligned}$$
for any $\theta\in\Theta(\chi)$.
\end{proof}

The second to last term in the previous inequality is even slightly better
than our claim whenever $\theta$ is not in general position (i.e., when
$\chi$ is not an irreducible Deligne-Lusztig character $\pm R_{T,\theta}$).

\begin{rem}
 Assume that $\bG=\bG^\circ$ is connected. 
\begin{enumerate}[\rm(a)]
 \item If $\chi$ is unipotent, so contained in some $R_{T,1}$, then
  $W(\bT,\theta)=W_G(\bT)$ and we obtain the bound
  $|\chi(s)|\le|W_G(\bT)|^\half$.
 \item Since there are roughly $|T|/|W_G(\bT)|$ characters not vanishing on
  a regular element $t\in T$, and $|C_G(t)|=|T|$, one might expect an average
  character value of $|W_G(\bT)|^\half$ on $t$. The example of $G=\SL_2(q)$
  shows that character values will be larger than this: there exist irreducible
  characters which on regular semisimple elements of order $q+1$ take value
  $\zeta+\zeta^{-1}$, $\zeta$ a  $q+1$st root of unity. This has absolute
  value arbitrarily close to $2=|W_G(\bT)|$.
 \item See also \cite[Thm.~3]{GLL} for an elementary proof of a result
  that still gives an explicit (but worse) bound.
\end{enumerate}
\end{rem}

For truly disconnected groups we expect that the correct upper bound
should be $|W_0(\bT)|$; this would follow from the above proof
once a stronger disjointness statement than the one in
Proposition~\ref{prop:disjoint} has been established, which
compares characters of $T$ instead of their restrictions to $T^\circ$.

%%%%%%%%%%%%%%%%%%%%%%%%%%%%%%%%%%%%%%%%%%%%%%%%%%%%%%%%%%%%%%%%%%%%%%%%%
\section{Algebraic Groups}       \label{sec:algebraic}

%%% char 0,  bigger fields

We now consider triples in simple algebraic groups. For
notational convenience, we now use standard font letters $G$ to denote
algebraic groups, and $G(q)$ for the fixed groups under Frobenius
endomorphisms with respect to an $\FF_q$-rational structure.

Fix a prime $p$ and let $k$ be the algebraic closure of $\FF_p$.
Let $G$ be a simple algebraic group over $k$. Of course, since $G$ is locally
finite, we cannot expect to generate $G$ with a finite number of elements. 
The replacement for generation is the property of generating the group
$G(q)$ over a finite field $\FF_q$ for arbitrarily large $q$. 

We do have to exclude $G=\SL_2$.  In this case,  there is a strong rigidity
result (see e.g. \cite{Mac}) and the conclusion of Theorem~\ref{thm:beaualg}
fails. 

We first need a result about maximal subgroups of simple algebraic groups. 

\begin{lem}   \label{lem:maxes}
 Let $G$ be a simple simply connected algebraic group over $k$.
 There exist positive integers $m$ and $m'$ (depending only upon the rank
 of the group) such that for any proper closed subgroup $H$ of $G$ one of
 the following holds:
 \begin{enumerate}
  \item $H$ is contained in a (maximal) positive dimensional subgroup;
 \item $H$ is not contained in any proper positive dimensional closed
   subgroup and $|H| \le m$; or
 \item $H$ contains $G(q)$ for some prime power $q > m'$ (including
 the possibility of a twisted form). 
 \end{enumerate}
 Moreover, there are only finitely many conjugacy classes of subgroups
 in (1) or (2).
\end{lem}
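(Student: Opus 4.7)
The plan is to combine the Liebeck--Seitz classification of maximal positive-dimensional closed subgroups with the Borovik/Liebeck--Seitz theorem on maximal finite (Lie-primitive) subgroups of simple algebraic groups. The lemma is essentially a repackaging of those deep results, so most of the work is bookkeeping once they are cited.

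First I would reduce to the case that $H$ is finite. If $H^\circ \neq 1$, then $H \le N_G(H^\circ)$ is contained in a proper positive-dimensional closed subgroup of $G$ (it is proper because $G$ is connected and $H$ is proper, so $H^\circ \lneq G$), placing $H$ in case~(1). So assume henceforth that $H$ is finite. If $H$ happens to lie in some proper positive-dimensional closed subgroup of $G$, we are again in (1); otherwise $H$ is Lie primitive.

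For Lie primitive finite $H$ I invoke the Borovik/Liebeck--Seitz theorem: either $|H|$ is bounded by a constant $m_0 = m_0(\rk G)$ depending only on the rank of $G$, or $H$ is almost simple with socle $S$ a finite simple group of Lie type in the defining characteristic $p$, and moreover $S$ arises as (the derived subgroup of) the fixed-point subgroup of a Steinberg endomorphism of $G$ attached to an $\FF_q$-rational structure of the same root datum as $G$ (possibly twisted). In the latter case, if $q$ exceeds a threshold $m'$ depending only on the rank we obtain $G(q) \le H$ and land in case~(3); for the finitely many $q \le m'$ the order of $G(q)$, and hence of $H$, is bounded in terms of the rank, so after enlarging $m_0$ to some $m$ we absorb these exceptional small-field possibilities into case~(2).

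For the finiteness of conjugacy classes, case~(1) is immediate because the maximal proper positive-dimensional closed subgroups of $G$ fall into finitely many $G$-conjugacy classes by Liebeck--Seitz, and every subgroup in (1) lies in one of them. For case~(2), since $|H| \le m$ and $H$ is Lie primitive of bounded order, a standard rationality argument shows $H$ is contained in $G(q_0)$ for some bounded $q_0$, and the finite group $G(q_0)$ has only finitely many subgroups; hence only finitely many $G$-conjugacy classes can occur. The main obstacle is really just ensuring that the bounds $m$ and $m'$ depend only on the rank of $G$ and not on the characteristic; this is precisely the content of the Borovik/Liebeck--Seitz result, so once it is invoked the lemma follows.
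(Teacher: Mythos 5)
Your overall route is essentially the one the paper takes: reduce to finite Lie primitive subgroups, quote the Borovik/Liebeck--Seitz analysis to get the trichotomy, and get the finiteness of classes in case (1) from the finiteness of classes of maximal positive-dimensional subgroups. (The paper phrases the classical case as ``Aschbacher's theorem plus representation theory'' and the exceptional case via \cite{LS03}; for classical groups of unbounded rank there is in fact no single off-the-shelf ``Borovik/Liebeck--Seitz theorem'' with constants depending only on the rank --- one assembles it from Borovik's theorem or Aschbacher's classes together with Steinberg's restriction/tensor-product theorems and Landazuri--Seitz type bounds --- but that is a matter of attribution, not of substance.)

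The genuine gap is your finiteness argument for case (2). You assert that ``a standard rationality argument'' shows that a Lie primitive subgroup $H$ with $|H|\le m$ lies in $G(q_0)$ for bounded $q_0$. First, as literally stated this is false: a generic conjugate of a fixed Lie primitive subgroup lies in no small subfield group, so the claim must be phrased up to conjugacy. More seriously, even up to conjugacy there is no routine argument here, and this step carries the real content of the ``moreover'' clause: in characteristic $p$ a bound on $|H|$ alone does \emph{not} yield finitely many conjugacy classes (for instance $\SL_2(k)$ contains infinitely many classes of subgroups isomorphic to $\ZZ/p\times\ZZ/p$), so Lie primitivity must be used in an essential way, and your sketch never says how. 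The usual rigidity argument (vanishing of $H^1(H,\mathfrak{g})$ plus finiteness of components of $\mathrm{Hom}(H,G)$) applies only when $p\nmid |H|$, whereas Lie primitive subgroups of bounded order with $p\mid |H|$ do occur (e.g.\ subfield-type subgroups over very small fields). The paper supplies exactly this missing ingredient by citing Martin \cite[Prop.~1.4]{Mar03}: Lie primitive subgroups lie in no proper parabolic (Borel--Tits), hence are $G$-completely reducible, and Martin's theorem gives finitely many conjugacy classes of such subgroups of bounded order. Replace your ``standard rationality argument'' by such a reference (or by a genuine proof along Bate--Martin--R\"ohrle lines) and the proposal is complete.
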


\begin{proof}
If $G$ is classical, this follows from Aschbacher's theorem on maximal
subgroups and representation theory. 
If $G$ is exceptional, then the result follows by the description of closed
maximal subgroups (and maximal Lie primitive groups) in Liebeck--Seitz
\cite{LS03}. The finiteness of the number of conjugacy classes follows by
\cite{LS03} for (1) and by Martin \cite[Prop. 1.4]{Mar03} for (2).
\end{proof}

We next define some subvarieties of $G^3$.
For conjugacy classes $C_1, C_2$ and $C_3$ in $G$ let
$$V(C_1, C_2, C_3)=\{(x_1, x_2, x_3)\mid x_i \in C_i, x_1x_2x_3=1\}.$$
For $M$ a  subgroup of $G$ we set
$V_M(C_1,C_2,C_3)=V(C_1,C_2,C_3)\cap M^3$ and
$$V(M)=\bigcup_{g\in G}\{(x,y,z)\in G^3\mid xyz=1,\langle x,y\rangle\le M^g\}.$$

We need a result about dimensions of centralizers. Recall that an algebraic
group is called reductive if it has no positive-dimensional closed connected
unipotent normal subgroup.

\begin{lem}  \label{lem:centralizers}
 Let $G$ be a reductive algebraic group with $G^\circ$ non-abelian.
 If $g \in G$, then $\dim C_G(g) > 0$.
\end{lem}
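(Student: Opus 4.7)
The plan is to observe that $C_G(g)\supseteq C_{G^\circ}(g)$, so it suffices to show that the fixed subgroup $H^\alpha$ of the automorphism $\alpha$ of $H:=G^\circ$ given by conjugation by $g$ has positive dimension. I would proceed by Jordan decomposition $g=g_sg_u$ into commuting semisimple and unipotent parts in $G$, giving commuting automorphisms $\alpha=\alpha_s\alpha_u$ of $H$ with $H^\alpha=(H^{\alpha_s})^{\alpha_u}$.

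First I would handle the connected case $g\in H$: the identity component $C_H(g_s)^\circ$ is reductive and has the same rank as $H$, because the centralizer of a semisimple element in a connected reductive group contains a maximal torus; and $\rk(H)>0$ since $H$ is non-abelian reductive (a reductive group of rank $0$ is trivial). Then $g_u$ is a unipotent element of $C_H(g_s)^\circ$, and Steinberg's inequality $\dim C_L(u)\ge\rk(L)$ for unipotent elements in a connected reductive group $L$ yields $\dim C_H(g)\ge\rk(H)>0$. For the disconnected case $g\notin H$, the semisimple part $g_s$ induces an outer semisimple automorphism $\alpha_s$ of $H$, and the key step is to show that $L:=(H^{\alpha_s})^\circ$ is reductive of positive rank. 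I would invoke the structure theory of semisimple automorphisms of connected reductive groups (Steinberg, or equivalently the theory of quasi-semisimple elements in disconnected reductive groups as in \cite{DM94}): in particular $\alpha_s$ stabilizes some maximal torus $T\le H$ and some Borel containing it, and the combination of the $\alpha_s$-fixed part of $T$ with contributions from $\alpha_s$-orbits of positive root subgroups yields a positive-dimensional reductive fixed subgroup, using that the non-abelianness of $H$ ensures a non-trivial root system on which $\alpha_s$ acts. Then $\alpha_u$ acts on $L$ as a unipotent automorphism, and the connected-case argument applied to $L$ in place of $H$ gives $\dim H^\alpha\ge\rk(L)>0$.

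The main obstacle is precisely this disconnected case: establishing that an outer semisimple automorphism of a non-abelian connected reductive group has a positive-dimensional fixed subgroup of positive rank. This requires non-trivial Steinberg/Digne--Michel structure theory rather than elementary tangent-space or Lang--Steinberg arguments, since an algebraic automorphism (as opposed to a Frobenius endomorphism) need not make the map $h\mapsto h\alpha(h)^{-1}$ on $H$ surjective, so one cannot directly conclude positivity of dimension from dimension counting alone.
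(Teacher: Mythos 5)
Your connected case is fine (modulo quoting the standard facts that $C_{G^\circ}(g_s)^\circ$ is reductive of full rank, that unipotent elements of $C_{G^\circ}(g_s)$ lie in its identity component, and Steinberg's inequality $\dim C_L(u)\ge\rk(L)$), and your treatment of the semisimple automorphism $\alpha_s$ via quasi-semisimplicity is also sound: $\alpha_s$ stabilizes a pair $(T,B)$, $L:=(H^{\alpha_s})^\circ$ is reductive, and the sum over an $\alpha_s$-orbit of simple coroots gives a fixed cocharacter, so $\rk(L)\ge 1$ since $H$ is non-abelian. The genuine gap is your last step, where you dispose of $\alpha_u$ by "the connected-case argument applied to $L$". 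That argument is Steinberg's inequality for a unipotent \emph{element of} a connected reductive group; it says nothing about a unipotent \emph{automorphism of} $L$. In characteristic $p$ the unipotent part $g_u$ need not lie in $L$, nor even in $G^\circ$ (take $p$ dividing the order of the image of $g$ in $G/G^\circ$, e.g.\ $g$ in the coset of a graph automorphism of order $p$ in characteristic $p$, possibly multiplied by a unipotent element of $G^\circ$). In the extreme case $g=g_u\notin G^\circ$ your Jordan reduction gains nothing: $g_s=1$, $L=H$, and the whole difficulty is untouched. Moreover the statement you would need — a $p$-power order automorphism of a connected reductive group of positive rank has positive-dimensional fixed group — is false as stated: inversion on $\mathbb{G}_m$ in characteristic $2$ is such an automorphism with finite fixed group. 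So positive rank of $L$ is not enough; you need non-abelianness again, and proving the claim for an outer unipotent automorphism of a non-abelian reductive group is essentially the original lemma (it is the core of Steinberg's theorem, or of the Spaltenstein/Digne--Michel theory of unipotent elements in disconnected groups), not a consequence of the inner case. A secondary, smaller point: when $g_u$ does lie in $C_H(g_s)$ you still owe an argument that it lies in $L$ (in characteristic $p$ this uses that the component group of the fixed group of a semisimple automorphism has order prime to $p$; in characteristic $0$ your exponential-type argument is fine).

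For comparison, the paper does none of this by hand: it observes that a reductive group with non-abelian identity component is non-solvable and quotes Steinberg \cite[Cor.~10.12]{StEnd}, which says precisely that an automorphism of a non-solvable connected algebraic group cannot have finite fixed-point group. That theorem (or its unipotent-automorphism core) is exactly the ingredient your sketch is missing; if you want a self-contained proof you must either cite it, or supply an argument for the outer unipotent case, e.g.\ by making $g$ stabilize a Borel subgroup of $G^\circ$ and analyzing its action there, rather than appealing to the inner-element inequality.
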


Note that a reductive group with $G^\circ$ non-abelian is necessarily
non-solvable, so the claim is just \cite[Cor.~10.12]{StEnd} (as was kindly
pointed out to us by T. Springer).

We also need the following fact (see \cite[Th.~1.2]{Gur07} for a closely
related result where $M$ is assumed to be reductive but $C$ is arbitrary):

\begin{lem}   \label{lem:classes}
 Let $G$ be an algebraic group, $M$ a closed subgroup, $C$ a semisimple
 conjugacy class of $G$. Then $C\cap M$ is a finite union of $M$-classes.
\end{lem}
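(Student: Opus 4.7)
My approach is to reduce to the case where $M$ is connected, and then to bound the number of $M$-orbits on $C \cap M$ via intersection with a maximal torus. Since $M/M^\circ$ is finite, $C \cap M$ splits $M^\circ$-equivariantly as the disjoint union $\bigsqcup_g (C \cap gM^\circ)$ over coset representatives $g$, and each piece is stable under $M^\circ$-conjugation; it therefore suffices to bound the number of $M^\circ$-orbits on each piece. The component $g=1$ is the main case, which I treat below; the non-identity components are handled by the same argument applied to the twisted $M^\circ$-action $m\co x\mapsto (g^{-1}mg)\,x\,m^{-1}$ on $M^\circ$, under the identification $gM^\circ\leftrightarrow M^\circ$ sending $gm'\mapsto m'$.

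Assume now that $M$ is connected, and fix a maximal torus $T_M$ of $M$. Any $t \in C \cap M$ is semisimple in $G$; since Jordan decomposition is preserved by closed embeddings into a common $\GL(V)$, $t$ is also semisimple as an element of $M$. Borel's theorem then gives that $t$ lies in some maximal torus of $M$, hence is $M$-conjugate to an element of $T_M$. Consequently
$$C \cap M = \bigcup_{m \in M} m(C \cap T_M) m^{-1},$$
and the number of distinct $M$-orbits on $C \cap M$ is at most $|C \cap T_M|$.

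It remains to prove that $C \cap T_M$ is finite. For this I would embed $G \hookrightarrow \GL(V)$, extend $T_M$ to a maximal torus $T$ of $G^\circ$, and then to a maximal torus $T'$ of $\GL(V)$. Fix $s \in C$ and set $C' = s^{\GL(V)}$. Then $C \subseteq C'$ and $T_M \subseteq T'$, so $C \cap T_M \subseteq C' \cap T'$. But $C' \cap T'$ is a single orbit of $W' = N_{\GL(V)}(T')/T' \cong \fS_{\dim V}$ on $T'$ (the diagonal matrices sharing the prescribed multiset of eigenvalues of $s$), hence finite. This yields the required finiteness and completes the proof for $M$ connected.

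The technical point I expect to require the most care is the disconnected case: verifying that twisted $M^\circ$-conjugation on $gM^\circ$ admits only finitely many orbits meeting $C$. This should follow by the same strategy, because any $t\in C\cap gM^\circ$ is semisimple, so the Zariski closure of $\langle t\rangle$ is an abelian subgroup of $M$ consisting of semisimple elements whose identity component is a torus in $M^\circ$; enlarging this torus to a maximal one and rerunning the Borel-style argument in the twisted setting should reduce the count of twisted orbits to the finite set $C\cap T'$ as before.
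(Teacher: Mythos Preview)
Your treatment of the connected case is correct and is actually more direct than the paper's route. You use that every semisimple element of a connected linear algebraic group lies in a maximal torus, so each $M$-orbit on $C\cap M$ meets $T_M$, and then bound the number of orbits by $|C\cap T_M|$, which you show is finite by passing to a maximal torus $T'$ of an ambient $\GL(V)$ and observing that a semisimple $\GL(V)$-class meets $T'$ in a single $\fS_{\dim V}$-orbit. The paper instead reduces (via Steinberg, then a conjugacy-of-complements argument using $H^1$) to the case where $M$ is reductive and then cites \cite{Gur07}. Your argument for connected $M$ is self-contained and avoids that external reference; this is a genuine simplification.

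The disconnected case, however, has a real gap. Your sketch forms $S=\overline{\langle t\rangle}^\circ$, a torus in $M^\circ$ centralized by $t$, and proposes to ``enlarge $S$ to a maximal torus'' $T_M$ and rerun the argument. But enlarging $S$ to some $T_M\supseteq S$ gives no reason for $t$ to normalize $T_M$; when $t$ has finite order one has $S=1$ and your construction imposes no constraint on $T_M$ at all. What is needed is precisely that every semisimple element of $M$ normalizes some maximal torus of $M^\circ$ --- this is Steinberg's theorem, which the paper invokes at the very first step (via \cite[Prop.~1.3]{DM94}) and which your sketch does not supply.

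Even granting Steinberg, your proposed endgame ``reduce the count of twisted orbits to the finite set $C\cap T'$'' does not go through as in the connected case. A semisimple $t$ in an outer coset lies in $N_M(T_M)$ but not in $T_M$, and a coset $hT_M$ can meet $C$ in infinitely many points: take $G=\GL_2$, $T_M=\{\mathrm{diag}(a,a^{-1})\}$, $h$ the nontrivial permutation matrix; then every element of $hT_M$ has characteristic polynomial $X^2-1$, so the whole coset lies in a single $\GL_2$-class. The finiteness of $M$-orbits there comes not from $|C\cap T'|$ but from the structure of twisted conjugation on the coset, and this is exactly what the paper's further reduction (to reductive $M$ via complements, then \cite{Gur07}) is handling. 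So your outline for the outer cosets needs both Steinberg's theorem and a genuinely new finiteness argument beyond the torus-intersection trick.
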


\begin{proof}
It suffices to assume that $M/M^{\circ}$ is cyclic of order $d$ prime to
the characteristic (by considering the finitely many cosets $xM^{\circ}$ where
$x$ is semisimple).
By a result of Steinberg (see \cite[Prop.~1.3]{DM94}) every semisimple
element of $M$ normalizes a maximal torus $T$ of $M^{\circ}$. Since
all maximal tori of $M^{\circ}$ are conjugate,  every
element of $C \cap M$ is conjugate in $M$ to an element of $N_M(T)$.
So if we can prove that $C \cap N_M(T)$ is a finite union of $N_M(T)$-classes,
then clearly $C \cap M$ is a finite union of $M$-classes (indeed, at most the
number of $C \cap N_M(T)$ classes). Thus it suffices to assume that $T$ is
normal in $M^\circ$, whence $M^\circ=T\times U$ with $U$ unipotent. \par
Since $x$ is semisimple, $x^d \in T$. Thus, $D:=\langle T, x \rangle$ is a
complement to $U$ in $M$.   We claim that any two complements of
$U$ in $M$ are conjugate in $M$.   By induction on $\dim U$, it suffices
to assume that $U$ is abelian (and of prime exponent if the characteristic
is positive).   Clearly, $T$ is contained in any complement of $U$.  Thus,
it suffices to observe that $H^1(D/T, U)=0$ (since $U$ is a projective
$D/T$-module). As above, we may now assume that $M=D$ is reductive. Now we
can apply \cite[Th.~1.2]{Gur07} (or give a direct proof). 
\end{proof}

T. Springer has shown to us how this can also be obtained by a tangent
space argument (similar to an argument of Richardson \cite{Ri67}).

We note a trivial bound:
 
\begin{lem}  \label{estimate}
 Let $G$ be a simple algebraic group, $M\le G$ a closed proper subgroup.
 Let $C_i$, $1\le i\le 3$, be conjugacy classes of regular semisimple elements.
 \begin{enumerate}
%%  \item  $\dim V(C_l, C_2, C_3) \cap V(M) \le  \dim M+\dim G$.
  \item[\rm(a)] If $d$ is the minimal dimension of $C_M(x)$ for $x\in M$, then 
   $$\dim \left(V(C_1, C_2, C_3) \cap V(M)\right) \le  \dim M+\dim G - 2d.$$
  \item[\rm(b)]  In particular, if $M^\circ$ is nonabelian and reductive, then
   $$\dim \left(V(C_1,C_2,C_3)\cap V(M)\right) \le \dim M+\dim G - 2.$$
 \end{enumerate}
\end{lem}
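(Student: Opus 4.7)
The plan is to realize $V(C_1,C_2,C_3)\cap V(M)$ as the $G$-saturation of the smaller set $V_M:=V(C_1,C_2,C_3)\cap M^3$ and then estimate the natural surjection
$$\phi\colon G\times V_M\longrightarrow V(C_1,C_2,C_3)\cap V(M),\qquad (g,v)\mapsto gvg^{-1}$$
(coordinate-wise conjugation) by bounding both $\dim V_M$ from above and the fibres of $\phi$ from below. Indeed, a triple $(x_1,x_2,x_3)$ lies in the target precisely when $\langle x_1,x_2\rangle\le M^g$ for some $g$, in which case $x_3=(x_1x_2)^{-1}\in M^g$ too, so the target is exactly the image of $\phi$.

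To bound $\dim V_M$: since $x_1x_2x_3=1$ determines $x_3$ from $x_1,x_2$, projection onto the first two coordinates embeds $V_M$ into $(C_1\cap M)\times(C_2\cap M)$. By Lemma~\ref{lem:classes}, each $C_i\cap M$ is a finite union of $M$-conjugacy classes, and an $M$-class through $x$ has dimension $\dim M-\dim C_M(x)\le\dim M-d$. Hence $\dim(C_i\cap M)\le\dim M-d$ and $\dim V_M\le 2(\dim M-d)$.

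To bound the fibres of $\phi$ from below, I would exhibit a free $M$-action on $G\times V_M$ that is compatible with $\phi$, namely $m\cdot(g,v)=(gm^{-1},\,mvm^{-1})$. This action preserves $G\times V_M$ (conjugation by $m\in M$ preserves $M^3$, the class memberships, and the relation $x_1x_2x_3=1$), it is free (the first coordinate forces $m=1$), and a direct check gives $\phi(gm^{-1},\,mvm^{-1})=gvg^{-1}=\phi(g,v)$. Every fibre of $\phi$ therefore contains an $M$-orbit of dimension $\dim M$. The fibre-dimension theorem, applied irreducible component by irreducible component of $G\times V_M$ and then taking the maximum, yields
$$\dim\bigl(V(C_1,C_2,C_3)\cap V(M)\bigr)\le\dim(G\times V_M)-\dim M\le\dim G+\dim M-2d,$$
which is part (a).

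Part (b) is then immediate from Lemma~\ref{lem:centralizers}: if $M^\circ$ is reductive and nonabelian then $M$ itself is reductive (any closed connected unipotent normal subgroup of $M$ lies in $M^\circ$ and is therefore trivial), so $\dim C_M(x)\ge 1$ for every $x\in M$ and $d\ge 1$. The only real obstacle is psychological: one must set up the free $M$-action so that the dimensions balance to give exactly $\dim G+\dim M-2d$ rather than the weaker $\dim G+2\dim M-2d$ one would obtain from naively bounding $\dim(\text{image})\le\dim G+\dim V_M$. Once the compatible $M$-action is identified, the rest is routine bookkeeping, with reducibility of $V_M$ handled by passing to irreducible components.
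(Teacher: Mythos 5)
Your proof is correct and takes essentially the same route as the paper: the same conjugation morphism $G\times V_M\to V(C_1,C_2,C_3)\cap V(M)$, the same bound $\dim V_M\le 2(\dim M-d)$ via Lemma~\ref{lem:classes}, and your free $M$-action is just a repackaging of the paper's observation that the fibre over any point $x\in V_M$ contains $\{(u^{-1},x^u)\mid u\in M\}$, i.e.\ an $M$-orbit of dimension $\dim M$. Part (b) is likewise handled exactly as in the paper, via Lemma~\ref{lem:centralizers} and $d\ge 1$.
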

 
\begin{proof}
Conjugation defines a surjective morphism $f: G\times W\rightarrow
V(C_1, C_2, C_3) \cap V(M)$, where $W= V_M(C_1,C_2,C_3)$. 

Let $x \in W$. Then $f(u^{-1}, x^u)=x$ for every $u \in M$.
Thus $\dim f^{-1}(x) \ge \dim M$ for every $x \in W$.
Since every element of the image of $f$ is in the $G$-orbit
of some element of $W$, each fiber of $f$  has dimension at least $\dim M$.

By Lemma~\ref{lem:classes} the intersection $C_i \cap M$ is a finite union
of $M$-classes. Thus, $\dim C_i \cap M \le \dim M - d$. It follows that
$\dim W \le 2(\dim M -d)$ and so 
$$\dim V(C_1, C_2, C_3) \cap V(M) 
  \le\dim G+\dim W-\dim M\le  \dim G+\dim M-2d$$
as claimed in~(a).

If $M^\circ$ is nonabelian and reductive, then every centralizer has positive
dimension by Lemma~\ref{lem:centralizers}.
Thus, $d \ge 1$ and (b) follows from (a).
\end{proof}
 
We can now  show that  $V(C_1, C_2, C_3)$
is irreducible of dimension $2 \dim G  - 3r$.    We thank Tonny Springer
for pointing out the fact that any irreducible component of this variety
has dimension at least that.
 
\begin{thm} \label{thm:dimtriples}
 Let $G$ be a simple simply connected algebraic group of rank $r$. 
 Let $C_i$, $1 \le i \le 3$, be classes of regular semisimple elements of $G$.
 Then  $V(C_1, C_2, C_3)$ is an irreducible variety
 of dimension  $2\dim G - 3 r$.
\end{thm}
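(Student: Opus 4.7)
I would work over $k=\bar\FF_p$ and count $\FF_q$-rational points of $V:=V(C_1,C_2,C_3)$ for all sufficiently large $q=p^a$; the dimension and irreducibility then follow from Lang--Weil together with an a priori lower bound on the dimension of each component.

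\textit{Lower bound.} The variety $V$ is the scheme-theoretic fiber over $1\in G$ of the multiplication map $\mu:C_1\times C_2\times C_3\to G$, $(x_1,x_2,x_3)\mapsto x_1x_2x_3$. Each regular semisimple class $C_i$ is irreducible of dimension $\dim G-r$, so every irreducible component of $V$ has dimension at least $3(\dim G-r)-\dim G=2\dim G-3r$ (this is Springer's observation, noted above the statement of the theorem). In particular every component has the \emph{same} lower bound on its dimension.

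\textit{Point count.} By Lang--Steinberg we may conjugate so that each $C_i$ is $F$-stable for some Frobenius endomorphism $F\co G\to G$ with $G^F=G(q)$; then $V$ is defined over $\FF_q$. Each intersection $C_i\cap G(q)$ breaks into finitely many $G(q)$-conjugacy classes $[s_i']_{G(q)}$, parametrized by $H^1(F,C_G(s_i))$. Applying the character formula in $G(q)$ to each triple of $G(q)$-classes inside $C_1\times C_2\times C_3$ and summing yields
$$|V(\FF_q)|=\frac{|C_1(\FF_q)|\,|C_2(\FF_q)|\,|C_3(\FF_q)|}{|G(q)|}+E(q),$$
where the first summand is the contribution of the trivial character and $E(q)$ collects the nontrivial ones. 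Standard counts give $|C_i(\FF_q)|=q^{\dim G-r}+O(q^{\dim G-r-1})$ and $|G(q)|=q^{\dim G}+O(q^{\dim G-1})$, so the main term equals $q^{2\dim G-3r}+O(q^{2\dim G-3r-1})$.

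\textit{Error estimate and conclusion.} Theorem~\ref{thm:bound-dis} provides the uniform bound $|\chi(s_i')|\le|W|$. By Lemma~\ref{lem:charfun}, only characters occurring in some Deligne--Lusztig character $R_{T,\theta}$ with $s_i'\in T$ can contribute to $E(q)$; such a character has degree at least a constant (depending only on $W$) times $|G(q)|/|T|\asymp q^{\dim G-r}$. Combined with the bound $|\Irr(G(q))|=O(q^r)$, this yields $E(q)=O(q^{\dim G-r})$, of strictly smaller order than the main term $q^{2\dim G-3r}$. Hence
$$|V(\FF_q)|=q^{2\dim G-3r}+O(q^{2\dim G-3r-1/2}),$$
and the same estimate, with leading coefficient $1$, holds over every finite extension $\FF_{q^n}$. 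By Lang--Weil, $V$ has a unique top-dimensional geometrically irreducible component of dimension $2\dim G-3r$; together with the lower bound from the first paragraph, $V$ itself is irreducible of dimension $2\dim G-3r$.

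\textit{Main obstacle.} The delicate point is to ensure that every character contributing to $E(q)$ has degree $\gtrsim q^{\dim G-r}$ rather than merely being bounded away from zero: Lemma~\ref{lem:charfun} identifies the relevant characters as constituents of the Deligne--Lusztig virtual characters $R_{T,\theta}$, whose components have the required large degrees, and Theorem~\ref{thm:bound-dis} controls their values. A secondary subtlety is making the Lang--Weil comparison uniform in $n$, which is what truly delivers geometric (as opposed to merely $\FF_q$-rational) irreducibility.
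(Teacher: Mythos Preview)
Your overall strategy matches the paper's: a lower bound on component dimension from intersection theory, a point count over $\FF_q$ via the character formula, and Lang--Weil to conclude. The gap is in your error estimate.

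The claim that every irreducible constituent of $R_{T,\theta}$ has degree bounded below by a constant (depending only on $W$) times $|G(q)|/|T|$ is false. For $\theta=1$ the virtual character $R_{T,1}$ decomposes into unipotent characters, and these can be much smaller than $|G|/|T|$. For instance, in $\SL_n(q)$ with $T$ the split torus, $R_{T,1}$ is the permutation character on $G/B$ and contains the unipotent character of degree $q(q^{n-1}-1)/(q-1)\approx q^{n-1}$, whereas $|G|/|T|\approx q^{n(n-1)}$. So the justification of $E(q)=O(q^{\dim G-r})$ breaks down; you are confusing the virtual dimension of $R_{T,\theta}$ with a lower bound on the degrees of its constituents.

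The paper repairs this step differently: it uses the estimate $\sum_{\chi\in\Irr(G(q))}\chi(1)^{-1}\le 1+O(q^{-1/2})$, valid whenever the Coxeter number exceeds~$2$, i.e.\ for rank $r>1$ (citing Liebeck--Shalev). Combined with the uniform bound $|\chi(s_i)|\le c$ from Theorem~\ref{thm:bound-dis}, this gives $|V(\FF_q)|=q^{2\dim G-3r}(1+O(q^{-1/2}))$ directly. The case $r=1$ is handled by a separate direct computation. Your proposal does not single out $r=1$, and indeed no argument of this shape can: in $\SL_2(q)$ one has $\sum_{\chi\ne 1}\chi(1)^{-1}\asymp 1$, so the error term is of the same order as the main term.

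One minor point: since $G$ is simply connected, centralizers of semisimple elements are connected, so each $C_i\cap G(q)$ is already a single $G(q)$-class; the $H^1$ sum is unnecessary (the paper uses exactly this, item~(c) in the proof).
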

 
\begin{proof}
We first  observe that every irreducible component of $V$ has dimension
at least  $2 \dim G - 3r$.  Let $W = G \times G \times G$,
$X=C_1 \times C_2 \times C_3 \subset W$ and $Y=\{(x,y,z) \in W \mid xyz=1\}$.
Then $\dim X = 3 \dim G - 3r$ and $\dim Y = 2 \dim G$. Since $X$ and $W$
are irreducible and $W$ is smooth and irreducible, it follows by
\cite[p. 146]{Weil} that indeed every component of $V= X \cap Y$ has
dimension at least $\dim X + \dim Y  - \dim W = 2 \dim G - 3r$.

If $r=1$, it is straightforward to compute directly (cf.~\cite{Mac}). Now
assume that $r > 1$.
 
Choose a large power $q$ of $p$ so that the $C_i$ are defined
over $G(q)$. Now we count the $\FF_q$-points of $V(C_1,C_2,C_3)$ using the
character formula  given in the introduction. 
We just use the following facts: 

\begin{enumerate}
 \item[\rm{(a)}] $|\chi(x)| \le c$ for some constant $c$ depending only on
  the rank of the group for any irreducible character $\chi$ of $G(q)$ and
  any regular semisimple element $x\in G(q)$, by Theorem~\ref{thm:bound-dis};
 \item[\rm{(b)}] $\sum_{\chi}  \chi(1)^{-1} \le 1 + O(q^{-1/2})$
  where the sum is over all irreducible characters of $G(q)$
  (this follows easily from Deligne-Lusztig theory as well, see
  the proof of \cite[Thm.~1.1]{LiSh2}, using that the Coxeter number of
  a simple algebraic group not of type $A_1$ is larger than~2);  and
\item[\rm{(c)}] $C_i(q)$ is a single $G(q)$ conjugacy class (since
  centralizers of semisimple elements in groups of simply connected type are
  connected, see for example \cite[Thm.~14.16]{MT10}).
\end{enumerate}
 
It follows that 
$$
|V(C_1,C_2,C_3)(q)|  = \frac{|G(q)|^2}{c_1c_2c_3} (1 +O(q^{-1/2})) = 
q^{2 \dim G - 3r}(1 + O(q^{-1/2})),
$$
where $c_i$ are the orders of centralizers in $G(q)$ of elements from $C_i$.

The Lang-Weil theorem \cite{LW} on the number of points of an irreducible
variety over a finite field now shows that $V(C_1,C_2,C_3)$ is irreducible
and of dimension as claimed.
\end{proof}

If $G$ is not simply connected, a variant of the previous result is still true.

\begin{cor}   \label{cor:notsc}
 Let $G$ be a simple algebraic group of rank $r$ and $C_i$, $1 \le i \le 3$,
 classes of regular semisimple elements of $G$. Then:
 \begin{enumerate}
  \item[\rm(a)] Every irreducible component of $V(C_1, C_2, C_3)$ has
   dimension $2\dim G - 3 r$.
  \item[\rm(b)] For $\pi:\hat{G}\rightarrow G$ a simply connected covering of
   $G$, with (finite, central) kernel~$Z$, choose conjugacy classes $D_i$,
   $1\le i\le 3$, of $\hat{G}$ that project onto $C_i$. Let
   $Z_i=\{z\in Z \mid zD_i = D_i\}$ and set $Z_0 = Z_1Z_2Z_3$. Then the
   number of components of $V(C_1,C_2,C_3)$ is $[Z:Z_0]$.
 \end{enumerate}
\end{cor}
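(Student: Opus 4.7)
The plan is to reduce to the simply connected case via the covering $\pi:\hat{G}\to G$ with central kernel $Z$, apply Theorem~\ref{thm:dimtriples} to $\hat{G}$, and keep track of the action of $Z$ on conjugacy classes by translation. First I would observe that for any $z\in Z$, the translate $zD_1$ is again a conjugacy class of regular semisimple elements of $\hat{G}$: since $z$ is central, the map $g\mapsto zg$ preserves centralizers and sends semisimple elements to semisimple elements. Theorem~\ref{thm:dimtriples} then applies to $(zD_1,D_2,D_3)$ to give, for each $z\in Z$, a non-empty irreducible variety
$$\hat{V}_z := V(zD_1,D_2,D_3)\subseteq \hat{G}^3\quad\text{of dimension}\ 2\dim G-3r.$$

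Next I would introduce a ``lifting obstruction'' $\phi:V(C_1,C_2,C_3)\to Z/Z_0$ as follows. Given $(x_1,x_2,x_3)\in V(C_1,C_2,C_3)$, choose arbitrary lifts $\hat{x}_i\in\hat{G}$; each $\hat{x}_i$ lies in a unique conjugacy class $\alpha_i D_i$, so $\alpha_i\in Z$ is determined modulo $Z_i$, and the product $a:=\hat{x}_1\hat{x}_2\hat{x}_3$ lies in $Z$ because its image in $G$ is trivial. Set $\phi(x_1,x_2,x_3) := a^{-1}\alpha_1\alpha_2\alpha_3$ in $Z/Z_0$. This is well-defined: replacing $\hat{x}_i$ by $z_i\hat{x}_i$ with $z_i\in Z$ changes $\alpha_i\mapsto z_i\alpha_i$ and $a\mapsto z_1z_2z_3\,a$, leaving $a^{-1}\alpha_1\alpha_2\alpha_3$ unchanged. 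The need to quotient by $Z_0=Z_1Z_2Z_3$ is precisely what is required to absorb the ambiguity of the $\alpha_i$ modulo $Z_i$.

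The key identification is $\phi^{-1}([z])=\pi^3(\hat{V}_z)$. The inclusion $\supseteq$ is immediate: for a triple coming from $\hat{V}_z$ one may take $\alpha_1=z$, $\alpha_2=\alpha_3=1$ and $a=1$. For $\subseteq$, starting from arbitrary lifts $\hat{x}_i\in\alpha_i D_i$ with product $a$, I would look for modifications $\hat{x}'_i=w_i\hat{x}_i$ with $w_1\in z\alpha_1^{-1}Z_1$, $w_2\in\alpha_2^{-1}Z_2$, $w_3\in\alpha_3^{-1}Z_3$ and $w_1w_2w_3=a^{-1}$; the set of possible products $w_1w_2w_3$ from the first three constraints is exactly the coset $z\alpha_1^{-1}\alpha_2^{-1}\alpha_3^{-1}Z_0$, so such modifications exist precisely when $\phi(x_1,x_2,x_3)=[z]$. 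This calculation in the abelian group $Z$ is the main technical step.

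Finally, since semisimple conjugacy classes are closed in the reductive group $\hat{G}$, each $\hat{V}_z$ is closed in $\hat{G}^3$; as $\pi^3$ is a finite (hence proper) morphism with finite fibers, $\pi^3(\hat{V}_z)$ is a closed irreducible non-empty subvariety of $V(C_1,C_2,C_3)$ of dimension $2\dim G-3r$. Therefore $V(C_1,C_2,C_3)=\bigsqcup_{[z]\in Z/Z_0}\pi^3(\hat{V}_z)$ is a disjoint union of $[Z:Z_0]$ closed, irreducible, equidimensional subvarieties, which must be exactly its irreducible components. This proves (a) and (b) simultaneously. The main obstacle is purely combinatorial: the careful bookkeeping of central cosets needed to define $\phi$ and identify its fibers; once the invariant is correctly set up, everything else follows directly from Theorem~\ref{thm:dimtriples}.
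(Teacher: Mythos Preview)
Your argument is correct and follows essentially the same route as the paper: lift to $\hat{G}$, apply Theorem~\ref{thm:dimtriples} to the translated classes, and show that the images under $\pi^3$ partition $V(C_1,C_2,C_3)$ into $[Z:Z_0]$ closed irreducible pieces of the right dimension. The only difference is cosmetic---the paper parametrizes by triples in $D_1\times D_2\times D_3$ with product $z$ and simply asserts that $\pi(V(z_1))=\pi(V(z_2))\Leftrightarrow z_1Z_0=z_2Z_0$, whereas you make this explicit via the obstruction map $\phi$; your bookkeeping is a faithful unpacking of that assertion.
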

 
\begin{proof}
For $z \in Z$, let $V(z)$ be the variety of triples
in $D_1 \times D_2 \times D_3$ with product~$z$. By the previous result,
this is an irreducible variety of dimension $2 \dim G -3r$ (this variety
is isomorphic to $V(D_1, D_2, z^{-1}D_3)$).
Let $X:=X(D_1, D_2, D_3) = \cup_z V(z)$.
Observe that $\pi(X)=V(C_1, C_2, C_3)$.
 
Suppose that $(x_1, x_2, x_3) \in V(C_1, C_2, C_3)$. Choose $y_i \in D_i$
with $\pi(y_i)=x_i$. Thus, $(y_1, y_2, y_3) \in V(z)$ for some $z \in Z$.
Indeed, we see that $\pi(V(z_1))=\pi(V(z_2))$ if and only if $z_1Z_0=z_2Z_0$,
and that the $\pi(V(z_i))$ are disjoint if $z_1Z_0 \ne z_2Z_0$.
Let $R$ be a set of coset representative for $Z/Z_0$.
Thus, $V(C_1, C_2, C_3)$ is a disjoint union of the $\pi(V(z))$,
$z \in R$.  Since $\pi$ is a finite map, this implies by the previous result
that  $\dim V(D_1, D_2, D_3) = \dim V(C_1, C_2, C_3)$ and that there
are $[Z:Z_0]$ different components.
\end{proof}

We  next consider connected but not necessarily reductive
groups $H$. Recall the notion of regular semisimple elements from
Definition~\ref{def:regular}.

\begin {cor}   \label{cor:upperbound}
 Let $H$ be a connected algebraic group, with unipotent radical $U$ and set
 $L=H/U$, a reductive group. Let $C_i$, $1\le i\le 3$, be regular semisimple
 conjugacy classes of $H$.    Let $V=V(C_1, C_2, C_3)$ be nonempty.
 Then every irreducible component of $V$ has dimension  $2 \dim [H,H]-3r$
 where $r$ is the rank of $[H,H]/U \cong [L,L]$.
\end{cor}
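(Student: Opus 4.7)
The plan is to project the whole problem to the reductive quotient $L := H/U$ via $\pi\co H\to L$ and analyze the induced map
\[
 \Pi\co V(C_1,C_2,C_3) \longrightarrow V_L(D_1,D_2,D_3),\qquad D_i := \pi(C_i).
\]
I will establish that every nonempty fiber of $\Pi$ is pure of dimension $2\dim U$, then combine this with the reductive-group dimension formula for $V_L$ deduced from Theorem~\ref{thm:dimtriples} on each simple factor of $[L,L]$ to obtain the upper bound. The matching lower bound will follow from a standard intersection estimate.

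Regular semisimplicity of $x \in C_i$ forces $C_U(x)=1$ and makes $1-\mathrm{Ad}(x)$ invertible on $\mathrm{Lie}(U)$; in particular $\pi(x)$ is regular semisimple in $L$, and the morphism $u\mapsto[x,u]$ is \'{e}tale at $1$, so $[x,U]$ generates $U$ and $U\subseteq[H,H]$; thus $\dim[H,H]=\dim[L,L]+\dim U$. Fix $(y_1,y_2,y_3)\in V_L$ in the image of $\Pi$ together with semisimple lifts $\tilde y_i\in C_i$ with $\pi(\tilde y_i)=y_i$. Every element of $C_i$ lying over $y_i$ has the unique form $u_i\tilde y_iu_i^{-1}$ for some $u_i\in U$, so the fiber of $\Pi$ is isomorphic to $f^{-1}(1)$, where $f\co U^3\to U$ is the morphism
\[
 f(u_1,u_2,u_3):=u_1\tilde y_1u_1^{-1}\cdot u_2\tilde y_2u_2^{-1}\cdot u_3\tilde y_3u_3^{-1},
\]
which lands in $U$ because its projection to $L$ equals $y_1y_2y_3=1$. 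A direct differential computation (using right translation to identify tangent spaces with $\mathrm{Lie}(H)$) shows that the partial differential of $f$ along the first coordinate at a point $(u_1^0,u_2^0,u_3^0)$ equals $\mathrm{Ad}(u_1^0)(1-\mathrm{Ad}(\tilde y_1))$ on $\mathrm{Lie}(U)$, which is an isomorphism by regular semisimplicity. Hence $df$ is everywhere surjective onto $\mathrm{Lie}(U)$, $f$ is smooth of relative dimension $2\dim U$, and every nonempty fiber of $\Pi$ is pure of dimension $2\dim U$.

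For the reductive quotient, decompose $L=Z(L)^\circ\cdot[L,L]$ as an almost direct product and write $D_i=z_iE_i$ with $z_i\in Z(L)^\circ$ central and $E_i$ a regular semisimple class of $[L,L]$. Then $V_L$ is isomorphic to the variety of triples in $E_1\times E_2\times E_3$ with product equal to the fixed element $(z_1z_2z_3)^{-1}\in Z(L)^\circ\cap[L,L]$; lifting to the simply connected cover of $[L,L]$ and applying Theorem~\ref{thm:dimtriples} on each simple factor (together with Corollary~\ref{cor:notsc} for isogeny bookkeeping) gives that every component of $V_L$ has dimension $2\dim[L,L]-3r$. Combined with the fiber analysis, every component of $V$ has dimension at most $\dim V_L+2\dim U=2\dim[L,L]+2\dim U-3r=2\dim[H,H]-3r$. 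For the lower bound, the product map $\mu\co C_1\times C_2\times C_3\to H$ necessarily lands in $[H,H]\subseteq H$ since $V$ is nonempty (the images of the $C_i$ in the abelianization $H/[H,H]$ must multiply to $1$), so by the fiber dimension theorem applied to $\mu$ every component of $V=\mu^{-1}(1)$ has dimension at least $3(\dim H-\rk H)-\dim[H,H]$, which a short calculation using $\rk H=r+\dim Z(L)^\circ$ and $\dim[H,H]=\dim H-\dim Z(L)^\circ$ rewrites as $2\dim[H,H]-3r$. The two bounds coincide, yielding the claimed equidimensionality.

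The main technical step is the differential computation for $f$, which reduces the whole dimension question to the linear fact that $1-\mathrm{Ad}(\tilde y_i)$ is invertible on $\mathrm{Lie}(U)$; this is precisely where the regular semisimplicity hypothesis is essential. Extending Theorem~\ref{thm:dimtriples} from simply connected simple groups to reductive $L$ is routine almost-direct-product bookkeeping with central-shift arguments and does not affect dimensions.
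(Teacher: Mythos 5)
Your proof is correct, and it follows the same overall strategy as the paper --- exploit regular semisimplicity to trivialize the unipotent direction ($C_U(x)=1$, $xU\subseteq x^H$, $U\subseteq[H,H]$) and reduce to the reductive quotient $L=H/U$ --- but both halves are executed by genuinely different devices. For the upper bound the paper only says to ``argue as in Theorem~\ref{thm:dimtriples}'', whereas you make the reduction explicit: the fibres of $\Pi\colon V\to V_L(D_1,D_2,D_3)$ are identified with $f^{-1}(1)$ for $f\colon U^3\to U$, and invertibility of $1-\mathrm{Ad}(\tilde y_i)$ on $\mathrm{Lie}(U)$ forces every such fibre to have dimension (at most) $2\dim U$; combined with the spelled-out almost-direct-product/central-shift reduction of $V_L$ to Theorem~\ref{thm:dimtriples} and Corollary~\ref{cor:notsc} (which the paper leaves implicit in the phrase ``the reductive case''), this gives the bound $\dim Z\le 2\dim[H,H]-3r$ for every component $Z$. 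For the reverse inequality the paper lifts a point of $V_L$ with $u_1,u_2$ free and $u_3$ determined and then invokes the reductive case, which most naturally bounds $\dim V$ rather than each component; your route via the fibre-dimension estimate for the multiplication map $\mu\colon C_1\times C_2\times C_3\to[H,H]$ is different and buys exactly the per-component lower bound demanded by the statement, and restricting the target to $[H,H]$ is what absorbs the central torus of $L$ (indeed $\dim C_H(x)=\rk H=r+\dim Z(L)^\circ$, so the naive ambient intersection bound in $H^3$ would fall short by $\dim Z(L)^\circ$; the paper's assertion $\dim C_H(x)=r$ is accurate only when $Z(L)^\circ$ is finite). Two standard facts should be made explicit in a final write-up: surjectivity of $u\mapsto u\tilde y_iu^{-1}$ onto $\tilde y_iU$ (hence onto $C_i\cap\tilde y_iU$) needs closedness of orbits of unipotent groups on affine varieties, since the differential argument only yields a dense orbit --- the paper uses the same fact when asserting $x_iu_i\in C_i$ --- and the identity $\mathrm{Lie}(C_U(x))=(\mathrm{Lie}\,U)^{\mathrm{Ad}(x)}$ in positive characteristic should be justified by smoothness of fixed points under the diagonalizable group generated by the semisimple element $x$.
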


\begin{proof} 
We first claim that $\dim C_H(x)=r$ for $x \in C_i$.
Since $x$ is regular semisimple,  we see that $C_U(x)=1$ and
$C_H(x) \cong C_{H/U}(x)$.  Since $H/U$ is reductive, the claim follows.

Now argue (using the bound for the dimension of a component of an
intersection) as in the proof of Theorem~\ref{thm:dimtriples} to deduce that
every irreducible component of $V$ has dimension at most $2 \dim [H,H] - 3r$.

We now prove the reverse inequality. If $U=1$, then the result is clear by
the result for simple groups.  Suppose that $x_i \in C_i$
with $x_1x_2x_3 \in U$.  Then we can choose $u_i \in U$ with $u_1$ and
$u_2$ arbitrary and $u_3$ uniquely determined by $u_1, u_2$ such
that $\prod_i (x_iu_i)=1$.  Since $x_i$ is semisimple regular, 
$x_iu_i \in C_i$.  Thus, the result follows by the reductive case. 
\end{proof} 

There is a version of this for some disconnected groups as well.
We will say an algebraic group is \emph{almost simple} if its connected
component is simple and contains its centralizer. The proof of the next
result is identical to that of the upper bound in Theorem~\ref{thm:dimtriples}
using Theorem~\ref{thm:bound-dis}.

\begin{cor} \label{cor:disconnected}
 Let $G$ be a not necessarily connected almost simple algebraic group.
 Let $C_i$, $1\le i\le 3$, be conjugacy classes of regular semisimple elements
 of $G$. Then $\dim V(C_1,C_2,C_3)\le 2\dim G - \sum_{i=1}^3(\dim G-\dim C_i)$.
\end{cor}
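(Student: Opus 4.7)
The plan is to mirror the upper-bound half of the proof of Theorem~\ref{thm:dimtriples}, replacing the Deligne--Lusztig-based character estimate by Theorem~\ref{thm:bound-dis}. Since $V(C_1, C_2, C_3)$ is a constructible subset defined by intrinsic conjugacy and product conditions, its dimension is unchanged under base change, so we may work over the algebraic closure of $\FF_p$ for some prime $p$. Fix a sufficiently large power $q$ of $p$ and a Steinberg endomorphism $F$ on $G$ coming from an $\FF_q$-structure under which each $C_i$ is $F$-stable; for each $i$ choose a representative $x_i \in C_i \cap G(q)$.

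First I would estimate $|V(C_1,C_2,C_3)(q)|$ from above via the character formula from the introduction. By Lemma~\ref{lem:classes}, each $C_i \cap G(q)$ decomposes into a number of $G(q)$-classes that is bounded independently of $q$. For any triple $(D_1, D_2, D_3)$ of $G(q)$-classes with $D_i \subseteq C_i$, the character formula evaluates the number of triples $(y_1,y_2,y_3) \in D_1\times D_2\times D_3$ with $y_1y_2y_3=1$ as
$$\frac{|D_1||D_2||D_3|}{|G(q)|} \sum_{\chi \in \Irr(G(q))} \frac{\chi(D_1)\chi(D_2)\chi(D_3)}{\chi(1)}.$$
Theorem~\ref{thm:bound-dis} applies (with $\bG = G$, which is almost simple by hypothesis), bounding $|\chi(x_i)|$ by a constant depending only on the Weyl group $W(G)$ of $G$. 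The companion estimate $\sum_\chi \chi(1)^{-1} = 1 + O(q^{-1/2})$ for $G^\circ(q)$ carries over to $G(q)$ via Clifford theory: every $\chi \in \Irr(G(q))$ has degree comparable (up to the bounded factor $[G:G^\circ]$) to that of a constituent of its restriction to $G^\circ(q)$, and the number of such $\chi$ lying above each $G^\circ(q)$-character is at most $[G:G^\circ]$. Summing over the bounded number of class triples $(D_1,D_2,D_3)$ produces
$$|V(C_1,C_2,C_3)(q)| \le K\cdot\frac{|G(q)|^2}{\prod_{i=1}^3 |C_{G(q)}(x_i)|}$$
for some constant $K$ independent of $q$.

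The final step is to convert this point count into a dimension bound. From $|G(q)| \le a_1 q^{\dim G}$ and $|C_{G(q)}(x_i)| \ge a_2 q^{\dim C_G(x_i)} = a_2 q^{\dim G - \dim C_i}$, we get $|V(C_1,C_2,C_3)(q)| \le K' q^{2\dim G - \sum_i(\dim G - \dim C_i)}$. The easy direction of the Lang--Weil estimates---an upper bound on $|W(\FF_q)|$ for a constructible set $W$ in terms of $\dim W$---then forces
$$\dim V(C_1,C_2,C_3) \le 2\dim G - \sum_{i=1}^3(\dim G - \dim C_i),$$
as required. The most delicate point I anticipate is the Clifford-theoretic transfer of the $\chi(1)^{-1}$ bound to the disconnected setting; everything else is an essentially verbatim translation of the point-counting half of the proof of Theorem~\ref{thm:dimtriples}.
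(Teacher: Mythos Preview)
Your approach is correct and is exactly what the paper does: it states that the proof is identical to the upper-bound half of Theorem~\ref{thm:dimtriples}, with Theorem~\ref{thm:bound-dis} supplying the character-value bound in the possibly disconnected setting, and you have simply written out the details (including the Clifford-theoretic transfer of the $\sum_\chi \chi(1)^{-1}$ estimate, which the paper leaves implicit). The only quibble is your invocation of Lemma~\ref{lem:classes} for the uniform bound on the number of $G(q)$-classes inside $C_i\cap G(q)$: applied with $M=G(q)$ that lemma is vacuous, and the bound independent of $q$ actually comes from Lang's theorem together with the bounded order of the component group $C_G(x_i)/C_G(x_i)^\circ$---but this does not affect the argument.
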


Note that the result applies equally well to the case that
$G$ is (essentially) a direct product of such groups.

We only need to apply the corollary in a special case which depends upon
the following result.

\begin{lem}   \label{lem:Dcentralizers}
 Let $G$ be a disconnected algebraic group such that $G^{\circ}$ is simple
 and $G/G^{\circ}$ is generated by a graph automorphism $\tau$ of order $d$.
 Let $X=\tau G^{\circ}$. Then the minimum dimension of $C_G(x)$, $x \in X$,
 is the number of orbits of $\tau$ on the Dynkin diagram of $G$. Moreover,
 a generic element of $X$ is semisimple regular.
\end{lem}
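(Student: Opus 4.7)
The plan is to exhibit an open dense subset of $X$ consisting of semisimple regular elements whose centralizer in $G$ has dimension $n$, where $n$ denotes the number of $\tau$-orbits on the Dynkin diagram, and then to deduce the minimum statement by upper semi-continuity of $x \mapsto \dim C_G(x)$ on the irreducible variety~$X$.

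First, I would invoke the standard structure theory of diagram automorphism fixed points: $H := C_{G^\circ}(\tau)^{\circ}$ is a connected reductive subgroup of $G^\circ$ whose root datum is obtained from that of $G^\circ$ by folding along~$\tau$, so $\rk(H) = n$ (as in the familiar list $A_{2m-1}^\tau = C_m$, $D_m^\tau = B_{m-1}$, $E_6^\tau = F_4$, $D_4^{\tau_3} = G_2$). Next, fix a maximal torus $S \subseteq H$ and an element $s \in S$ regular in~$H$, and set $x := s\tau$; since $s$ commutes with $\tau$ and both are semisimple, so is~$x$. The Lie algebra of $C_{G^\circ}(x)^\circ$ is the fixed space of $\operatorname{Ad}(x) = \operatorname{Ad}(s)\operatorname{Ad}(\tau)$ on $\mathfrak{g}^\circ := \operatorname{Lie}(G^\circ)$. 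Decomposing $\mathfrak{g}^\circ = \bigoplus_{\zeta^d = 1} \mathfrak{g}^\circ_\zeta$ into $\operatorname{Ad}(\tau)$-eigenspaces and each $\mathfrak{g}^\circ_\zeta$ further into $S$-weight spaces, this fixed space is $\operatorname{Lie}(S)$ (of dimension~$n$, using the regularity of $s$ in $H$ to kill nontrivial $S$-weights on $\mathfrak{g}^\circ_1 = \operatorname{Lie}(H)$), plus, on each $\zeta \ne 1$ summand, the $\zeta^{-1}$-eigenspace of $\operatorname{Ad}(s)$, which vanishes for $s$ outside a finite union of proper subtori of~$S$. Hence on a nonempty open subset $S^{\mathrm{reg}} \subseteq S$ we have $C_G(s\tau)^\circ = S$, a torus of dimension~$n$, so $s\tau$ is semisimple regular in the sense of Definition~\ref{def:regular}.

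To finish, I would consider the conjugation map $\psi \colon G^\circ \times S^{\mathrm{reg}} \to X$, $(g,s) \mapsto g(s\tau)g^{-1}$. The fibre over a generic $s_0\tau$ is a finite union (indexed by the $W_H$-orbit of $s_0$, where $W_H := N_H(S)/S$) of cosets of $C_{G^\circ}(s_0\tau)$, hence has dimension~$n$, so $\dim(\im\psi) = \dim G^\circ + n - n = \dim X$ and the constructible set $\im\psi$ contains an open dense subset $U \subseteq X$ on which every element is semisimple regular with $\dim C_G(\cdot) = n$. Upper semi-continuity of $\dim C_G$ on $X$ then forces $\dim C_G(x) \ge n$ on $\overline{U} = X$, and combined with the value $n$ attained on $U$ this delivers both the minimum statement and the density of the semisimple regular locus. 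The step demanding the most care is the generic vanishing on the $\zeta \ne 1$ summands: since only finitely many nonzero $S$-weights appear across the $\mathfrak{g}^\circ_\zeta$ and only $d-1$ target eigenvalues $\zeta^{-1}$, their bad loci define only finitely many proper subtori of~$S$, whose union is therefore still proper.
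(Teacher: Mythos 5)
Your proof is correct and follows the same overall scaffolding as the paper's: produce regular semisimple elements in the coset of the form $s\tau$ with $s$ in the fixed torus, spread them over a dense subset of $X$ by a conjugation map with $n$-dimensional fibres, and finish by upper semicontinuity of $x\mapsto\dim C_G(x)$ on the irreducible coset. Where you genuinely differ is in the key centralizer computation. The paper chooses (by inspection) a $\tau$-stable maximal torus $T$ and a regular semisimple element $g\in T$ fixed by $\tau$ and of finite order prime to $d$, so that $(\tau g)^d=g^d$ is regular semisimple in $G^\circ$; this pushes everything down to the connected group (the centralizer of $\tau t$ lands inside $T$ and then inside $T^\tau$), and genericity of regular semisimplicity is obtained from openness of the condition ``$x^d$ is regular semisimple'' together with dominance of the conjugation map $G^\circ\times\tau T\to X$ (fibres of dimension $\dim T$). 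You instead compute $\mathfrak{g}^{\operatorname{Ad}(s\tau)}$ directly via the $\operatorname{Ad}(\tau)$-eigenspace and $S$-weight decompositions and the folding description of $C_{G^\circ}(\tau)^\circ$, and conjugate from $G^\circ\times S^{\mathrm{reg}}$ (fibres of dimension $n$). Your route avoids the case-by-case ``inspection'' and the detour through $d$-th powers, at the cost of the smoothness/separability inputs ($\operatorname{Lie}(C_{G^\circ}(\tau))=\mathfrak{g}^{\operatorname{Ad}\tau}$, zero weight space of $S$ equals $\operatorname{Lie}$ of the torus centralizer), which hold exactly because the characteristic does not divide $d$.

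Two small caveats, neither of which is a gap relative to the paper. First, both your argument (the eigenspace decomposition over $d$-th roots of unity, and the assertion that $\tau$ is semisimple) and the paper's (the element of order prime to $d$, and the deduction that $x^d$ regular semisimple forces $x$ semisimple) implicitly assume $p\nmid d$; this is also forced by the statement itself, since over $\overline{\FF}_p$ with $p\mid d$ no element of $\tau G^\circ$ is semisimple. Second, two minor imprecisions in your write-up: the bad loci $\{s\in S:\mu(s)=\zeta^{-1}\}$ are cosets of $\ker\mu$ (proper closed subsets), not subtori; and the fibre components of $\psi$ are indexed not quite by a $W_H$-orbit but by a finite set controlled by $N_{G^\circ}(S)/T$ together with the finite intersection of $S$ with the image of $t\mapsto t\tau(t)^{-1}$ on $T$ --- only finiteness matters for your dimension count, and that does hold since any $g$ carrying $s\tau$ to $s_0\tau$ must normalize $S=C_G(s\tau)^\circ$.
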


\begin{proof}
By inspection, we can choose $\tau$ such that $C_G(\tau)^{\circ}$ is a simple
algebraic group of rank equal to the number of orbits of $\tau$ on the Dynkin
diagram, and $\tau$ centralizes a regular semisimple element $g\in T$ of order
prime to $d$ in some $\tau$-stable maximal torus $T\le G^{\circ}$.
%%%  SL_{2n}, Sp_{2n},  SL_{2n+1}, SO_{2n+1},  D_n, B_n,  D_4, G_2,  E_6, F_4 
Thus,  $(\tau g)^d = g^d$ is regular semisimple in $G^{\circ}$.  It follows
that $(\tau g')^d$ is regular semisimple for an open subvariety of $g' \in T$
(and so also for $g' \in G^{\circ}$), whence the second statement follows.
\par

Let $f:G^{\circ} \times \tau T \rightarrow X$ be the conjugation map.
Let $t \in T_0$ with $(\tau t)^d$ semisimple regular. Now for $t' \in T$, if
$f(g, \tau t')=g(\tau t')g^{-1} = \tau t$, then $\tau t'$ and $\tau t$ have
the same centralizer in $G^0$  (namely $T_0$) and so $g$ normalizes $T_0$
and so $T$. Thus, the generic fiber of $f$ has dimension equal to $\dim T$
and $f$ is a dominant map. This shows that for elements
in a nonempty subvariety of $X$,  the dimension of the centralizer is equal
to that of $T_0$, whence that is the minimum dimension. This completes
the proof.
\end{proof}

We will also need the  following result which is  a version of
\cite[Thm.~3.3]{Gur98}. 

\begin{thm}  \label{thm:open}
 Let $G$ be a simple algebraic group over an algebraically closed field $k$
 of characteristic $p \ge 0$. Let $N$ be a positive integer.
 \begin{enumerate}
  \item[\rm(a)] If $p=0$, then
   $\{(x,y)\in G \times G\mid G=\overline{\langle x,y\rangle}\}$
   is a nonempty open subvariety of $G \times G$.
  \item[\rm(b)] If $p>0$, then $\{(x,y)\in G \times G\mid G(q)^g\le
   \overline{\langle x, y\rangle} \text{ for some } g \in G, q > N\}$ is a
   nonempty open subvariety of $G \times G$.
 \end{enumerate}
\end{thm}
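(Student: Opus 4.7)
My plan is to follow the strategy of \cite[Thm.~3.3]{Gur98}, showing separately that the indicated set is nonempty and that its complement is Zariski closed in $G \times G$.

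For nonemptiness in part~(b), I would choose a prime power $q > \max\{N, m'\}$; the finite simple group $G(q)$ is 2-generated for $q$ sufficiently large (a standard fact), so I can take $x, y \in G(q)$ with $\langle x, y\rangle = G(q)$, giving $G(q) \le \overline{\langle x, y\rangle}$ as required. For part~(a), the statement is that a simple algebraic group in characteristic zero admits a topologically generating pair. I would deduce this from part~(b) via the Lefschetz principle applied to a finitely generated model of $G$, or construct such a pair directly by taking a regular semisimple element together with a regular unipotent element and observing, via simplicity of $G$, that the Zariski closure of the subgroup they generate cannot lie in any proper closed subgroup.

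For openness I would show the complement is a finite union of closed subsets. If $(x,y)$ lies in the complement, then $H := \overline{\langle x, y\rangle}$ is a proper closed subgroup of $G$ which, in case~(b), contains no $G(q)^g$ with $q > N$. Applying Lemma~\ref{lem:maxes} in characteristic $p > 0$, respectively its analogue in characteristic zero (positive-dimensional maximals fall into finitely many $G$-conjugacy classes by Aschbacher's theorem and by Liebeck--Seitz; finite Lie-primitive maximals are of bounded order and form finitely many classes by Jordan's theorem for algebraic groups), one sees that $H$ is contained in a maximal closed subgroup $M$ from a finite list of conjugacy classes, where in case~(b) the finitely many prime powers $m' < q \le N$ contribute additional finite classes of subgroups of the form $G(q)^g$. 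For each representative $M$ I set
\[
 X_M := \{(x,y) \in G \times G \mid x, y \in M^g \text{ for some } g \in G\}.
\]
This $X_M$ is $G$-invariant, equals the image of the conjugation morphism $G \times M \times M \to G \times G$, and has dimension at most $\dim G + 2\dim M - \dim N_G(M) < 2\dim G$.

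The main obstacle will be upgrading ``constructible'' to ``closed'' for each $X_M$ when $M$ is positive-dimensional but not parabolic, since then $G/N_G(M)$ is not complete and the natural proper-map argument from the incidence variety $\{(x,y,gN_G(M)) : g^{-1}xg, g^{-1}yg \in M\} \subseteq G \times G \times G/N_G(M)$ does not immediately apply. Following \cite{Gur98}, I would argue by induction on $\dim M$: any point in $\overline{X_M} \setminus X_M$ must lie in $X_{M'}$ for some proper closed subgroup $M' < M$ of strictly smaller dimension, whose $G$-conjugates form a class that can itself be added to the finite list of representatives. After this enlargement the total union $\bigcup_M X_M$ is closed, and its complement is the indicated open subvariety.
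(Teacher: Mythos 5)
Your nonemptiness arguments are essentially fine (though note that in characteristic $0$ a regular semisimple and a regular unipotent element may both lie in a principal $\SL_2$, so ``simplicity of $G$'' alone does not force dense generation), but the openness half has a genuine gap, and it sits exactly at the step you flag. The proposed repair --- that every point of $\overline{X_M}\setminus X_M$ lies in $X_{M'}$ for some proper closed subgroup $M'<M$ of strictly smaller dimension, so that finitely many new classes can be appended and an induction on $\dim M$ closes up the union --- is false. When $M$ is reductive and not parabolic, $G/N_G(M)$ is affine and the conjugates $M^g$ degenerate \emph{outward}, into subgroups of parabolic subgroups, not inward into subgroups of $M$. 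Already for $G=\SL_2$ in characteristic $0$ and $M=N_G(T)$ with $T$ a maximal torus (a maximal closed subgroup): a pair lying in $M^g$ stabilizes the two fixed points of $T^g$ on $\PP^1$, and letting these two points collide one can produce limit pairs $(u,s)$ with $u$ a nontrivial unipotent element and $s$ a noncentral semisimple element of a common Borel subgroup. Such a pair lies in $\overline{X_M}$, but $\overline{\langle u,s\rangle}$ contains the one-parameter unipotent group $\overline{\langle u\rangle}$, hence is contained in no conjugate of $N_G(T)$ (whose identity component is a torus) and a fortiori in no conjugate of a proper closed subgroup of $N_G(T)$ of smaller dimension, all of which are finite. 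The same degeneration occurs in higher rank for every non-parabolic reductive $M$: the subgroups that actually absorb the boundary points are parabolics, of \emph{larger} dimension, so the induction does not descend; the finiteness of the new classes needed is precisely what would have to be proved; and even granting such a degeneration theorem, one must still check that the enlarged closed union does not meet the set one wants to be open, since otherwise its closedness says nothing about the closedness of the complement.

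The paper sidesteps all of this by never attempting to close up the sets $X_M$. By Guralnick--Tiep for classical types and Liebeck--Seitz for exceptional types, there is a finite collection of rational $kG$-modules such that the only proper closed subgroups acting irreducibly on all of them are conjugates of subfield groups $G(q)$ with $q$ large (and none at all when $p=0$). Reducibility of $\overline{\langle x,y\rangle}$ on one of these finitely many modules is a closed condition on $(x,y)$, because the incidence variety of pairs together with a common invariant subspace projects properly to $G\times G$ by completeness of the Grassmannians; openness is then immediate, and nonemptiness follows from the $2$-generation of $G(q)$, as in your argument. To rescue your route you would need a genuine degeneration theorem for conjugates of reductive subgroups (limits land in conjugates of finitely many parabolic overgroups, all of whose pairs still lie in the complement), which is a nontrivial input your sketch does not supply.
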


\begin{proof}
We first give the proof for $p > 0$.  It suffices to assume that
$k=\overline{\FF_p}$.  By Guralnick--Tiep \cite[Thm.~11.7]{GT08} for the
classical groups and using Liebeck-Seitz \cite{LS04} for the exceptional
groups, there is a finite collection of rational $kG$-modules such that the
only proper closed subgroups of $G$ irreducible on all of them are conjugates
of $G(q)$ for some sufficiently large $q$ (this includes twisted versions).
The set of pairs which are reducible on any finite
collection of modules is a closed condition. Since $G(q)$ is $2$-generated,
the set is nonempty (one only needs to know this for some sufficiently
large $q$).

If $p=0$, then no proper closed subgroup is irreducible on the collection
of submodules given. Since one can easily find two elements which
generate a dense subgroup, the result follows.
\end{proof}

\begin{thm}  \label{thm:beaualg}
 Assume that $G$ is a simple simply connected algebraic group of rank $r>1$
 over the algebraic closure $k$ of $\FF_p$. Let $C_i$, $1 \le i \le 3$,
 be regular semisimple classes and $V=V(C_1, C_2, C_3)$. Assume also that
 $G \ne \SL_3$ if each $C_i$ consists of elements of order $3$ modulo $Z(G)$.
 Fix a positive integer $m$. Then for a generic $x:=(x_1, x_2, x_3) \in V$
 we have that $\langle x_1, x_2 \rangle \ge G(q)$ for some $q > m$.
\end{thm}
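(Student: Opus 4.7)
By Theorem~\ref{thm:dimtriples}, $V$ is irreducible of dimension $N := 2\dim G - 3r$. Since $x_1, x_2 \in G(\overline{\FF_p})$ have entries in some finite field, $H := \langle x_1, x_2 \rangle$ is automatically finite, hence Zariski closed and proper in $G$. My plan is to show that the locus
$$
 B := \bigcup_{M}\bigcup_{g \in G}\{x \in V : H \le M^g\} \subseteq V
$$
is a proper closed subvariety of $V$, where $M$ ranges over a finite list of ``bad'' subgroups: (i) the proper positive-dimensional maximal closed subgroups of $G$, (ii) the finite subgroups of order below the absolute bound from Lemma~\ref{lem:maxes}, and (iii) the finitely many $G(q)$ with $q \le m$. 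For $x \in V \setminus B$, the subgroup $H$ falls outside all of (i)--(iii), and Lemma~\ref{lem:maxes} then forces $H \supseteq G(q)$ for some $q > m$, which is the desired conclusion. By Lemma~\ref{lem:maxes} and \cite[Prop.~1.4]{Mar03} only finitely many $G$-conjugacy classes contribute to $B$, so it suffices to establish $\dim V(M) < N$ for each individual such~$M$.

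\textbf{Dimension bounds.} For $M$ finite, $V(M)$ is the image of the conjugation morphism $G \times (V \cap M^3) \to V$, so $\dim V(M) \le \dim G$; an elementary case check yields $\dim G > 3r$ for every simple algebraic group of rank $r > 1$, whence $\dim V(M) < N$. For $M$ positive-dimensional maximal, Lemma~\ref{estimate}(a) combined with Lemma~\ref{lem:classes} gives
$$
 \dim V(M) \le \dim M + \dim G - 2 d_M,
$$
where $d_M$ is the minimal dimension of $C_M(y)$ for $y \in M$ semisimple regular in $G$. Since $C_{M^\circ}(y)^\circ$ is reductive of rank $\rk(M^\circ)$, we always have $d_M \ge \rk(M^\circ)$; moreover, if $M$ contains a maximal torus of $G$ (as for parabolic and maximal-rank subsystem subgroups), one may take $d_M \ge r$. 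When $M^\circ$ is connected, Corollary~\ref{cor:upperbound} applied to $M^\circ$ (with Levi quotient $L = M^\circ/U$) supplies the sharper exact estimate $\dim V_{M^\circ} = 2\dim[M^\circ,M^\circ] - 3\rk([L,L])$, which together with $\dim V(M) = \dim G + \dim V_{M^\circ} - \dim N_G(M^\circ)$ suffices for a direct case-by-case verification against the classification of maximal closed subgroups. For example, for maximal parabolics $P$ of $A_n$-type one computes a strictly positive gap $N - \dim V(P) = k(n+1-k) - 1$. The hypothesis-excluded case $G = \SL_3$ with three classes of elements of order~$3$ modulo $Z(G)$ is precisely the borderline $(3,3,3)$-triangle situation where this gap collapses, cf.~\cite{Mac}.

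\textbf{Conclusion and main obstacle.} The union $B$ is then a proper closed subvariety of the irreducible $V$, so $V \setminus B$ is a non-empty Zariski open subset, and every $x \in V \setminus B$ satisfies the conclusion. The main technical obstacle lies in the dimension bookkeeping in the preceding paragraph for the positive-dimensional subgroups, particularly for maximal parabolics and maximal-rank reductive subgroups of the classical groups, where the general estimate $\dim V(M) \le \dim M + \dim G - 2r$ is already close to $N$. One must carry out a type-by-type computation using Corollary~\ref{cor:upperbound} (and, for the disconnected pieces arising from graph automorphisms, Corollary~\ref{cor:disconnected} together with Lemma~\ref{lem:Dcentralizers}) to obtain the precise dimension of $V_M$ and verify the strict gap in each case, with the $\SL_3$ exclusion accounting for the single borderline failure.
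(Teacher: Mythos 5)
Your outline follows the paper's strategy (irreducibility of $V$ from Theorem~\ref{thm:dimtriples} plus a dimension count for each $V(M)$ with $M$ running over finitely many classes of ``bad'' subgroups), but two of the steps you assert do not hold as stated, and they are exactly where the real work lies. First, the deduction ``for $x \in V\setminus B$, Lemma~\ref{lem:maxes} forces $H \ge G(q)$ with $q>m$'' is not justified: the lemma only produces $G(q)$ with $q>m'$, where $m'$ is a constant depending on the rank, and adding the subgroups $G(q)$, $q\le m$, to your bad list does not bridge the gap, because $H$ may properly contain a small subfield group $G(q_0)$ (for instance by central or diagonal elements of $G(k)$ normalizing it) without lying in any $G(q)$ with $q\le m$ and without containing any $G(q)$ with $q>m$. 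You would need either to put the normalizers $N_G(G(q_0))$, $q_0\le m$, into the bad list (using the classification of overgroups of subfield groups), or to argue as the paper does via Theorem~\ref{thm:open}(b): the locus of pairs generating a group containing some $G(q)$, $q>m$, is open, so by irreducibility the theorem reduces to showing $V$ is not covered by the finitely many $V(M)$. (Also, $B$ is only constructible, not closed --- the conjugation images need not be closed --- though this is harmless since a bound on $\dim\overline{V(M)}$ suffices.)

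Second, and more seriously, your blanket centralizer bounds $d_M\ge\rk(M^\circ)$, and $d_M\ge r$ when $M$ has maximal rank, are false precisely in the cases that matter: for $y\in C_i\cap M$ regular semisimple one has $C_M(y)=M\cap T_y$ with $T_y=C_G(y)$ a maximal torus, and when $y$ lies outside $M^\circ$ this intersection can be finite (e.g.\ $M=N_G(T)$ with $y$ over a Coxeter element, $M=\mathrm{SO}_{2n}.2$ inside $B_n$, $\SL_2\wr 2$ inside $\Sp_4$, or the stabilizer of a nondegenerate $1$-space in an orthogonal group when $p>2$). These disconnected and small-centralizer configurations are where the inequality $\dim V(M)<2\dim G-3r$ becomes tight, where Corollary~\ref{cor:disconnected} and Lemma~\ref{lem:Dcentralizers} must be invoked, and where the excluded case $G=\SL_3$ with order-$3$ classes actually emerges (the torus normalizer); they, together with the almost simple maximal subgroups (handled in the paper via L\"ubeck's low-dimensional module bounds) and the exceptional groups (via Liebeck--Seitz), constitute the bulk of the paper's proof. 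Your proposal defers this entire case analysis as a ``technical obstacle'' and supports it only with the $A_n$-parabolic computation, which is one of the easy cases; as written the argument neither verifies the needed strict inequality for the genuinely borderline subgroups nor exhibits why the failure is confined to $\SL_3$ with three order-$3$ classes, so the proof is incomplete at its core.
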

 
\begin{proof}
As noted above, the set of pairs $(u,v) \in G \times G$
such that $\langle u,v \rangle $ contains $G(q)$ for some $q > m$ is open
in $G \times G$. Thus, if the result fails, it follows by Lemma~\ref{lem:maxes}
that for every $x \in V$, $\langle x_1, x_2 \rangle \le M$ for some closed
subgroup $M$ either with $M$ a positive dimensional maximal closed subgroup
of $G$ or with $|M| \le m$ and $M$ not contained in any proper positive
dimensional subgroup. There are only finitely many conjugacy classes of
such $M$ as we have already noted in Lemma~\ref{lem:maxes}.
Thus, $V$ is contained in the finite union of these $V(M)$.
Since $V$ is irreducible by Theorem~\ref{thm:dimtriples}, this implies that
$V$ is contained in the closure of $V(M)$ for some fixed $M$.
As we have seen above, this implies that
$2 \dim G - 3r = \dim V\le \dim M + \dim G $, whence
$\dim M \ge \dim G - 3r$. In particular, since $r > 1$
this implies that $\dim M > 0$, and even that $\dim M > r$
unless possibly $G=\SL_3$ where $r=2$.

Suppose that $M$ is a maximal subgroup that is connected (or more generally
$M^\circ$ contains all semisimple elements of $M$). It follows by
Corollary \ref{cor:upperbound} that $\dim V_M(C_1, C_2, C_3)
\le 2\dim [M,M] - 3r_1$, where $r_1=\rk([M,M])$. Arguing as
in the proof of Lemma~\ref{estimate}, this implies that
$$\begin{aligned}
2\dim G -3r = \dim V(C_1,C_2,C_3)\le& (\dim G-\dim M) + \dim V_M(C_1,C_2,C_3)\\
  \le& \dim G  - \dim M + 2 \dim [M,M] - 3r_1.
\end{aligned}$$
Thus $\dim G \le 2 \dim [M,M] - \dim M -3r_1 + 3r$. This clearly
cannot be the case if $r_1 =r$. In particular, $M$ cannot be of type $D_n$
in $C_n$ in characteristic $2$. If $r_1 \le r-1$, this implies that
$\dim G  \le 2 \dim [M,M] - \dim M +3 $. If $M$ is not semisimple
and $r_1=r-1$, this yields $\dim G \le \dim M +1$ which cannot
occur since
$r > 1$.   If $M$ is semisimple of rank $r-1$, the inequality above gives
$\dim G \le \dim M + 3$. There are no proper semisimple subgroups of
codimension at most $3$. Thus, we see that either $r_1 < r-1$ or at least
one of the classes $C_i \cap M$ is not contained in $M^{\circ}$.
In particular, $M$ is reductive.

Now $C_i \cap M$ is a finite union of conjugacy classes of $M$ by
Lemma~\ref{lem:classes}. Thus,
$$\dim V(C_1,C_2,C_3)\cap M^3=\dim V(D_1,D_2,D_3)$$
where $D_i$ is a conjugacy class of $M$ with $D_i \subseteq C_i \cap M$.

Suppose that $M^\circ$ is a torus.  As we have noted above, this implies
that $r =2$ and $G=\SL_3$.   Indeed, arguing as above, we see that
$\dim V(D_1, D_2, D_3) +  6  \ge \dim V(C_1, C_2, C_3) = 10$
or $\dim V(D_1, D_2, D_3) \ge 4$.   Clearly,  $\dim V(D_1, D_2, D_3)
\le \dim D_1 + \dim D_2$.
It follows that the $D_i$ have finite centralizer in $M^{\circ}$.
The only possibility is that the $D_i$ consist of elements of order $3$,
but this is excluded by hypothesis.

Thus, every element of $M$ has a positive dimensional centralizer in $M$
by Lemma \ref{lem:centralizers}. Then with Lemma~\ref{estimate}(b) the
argument above gives a bit more:
$$\dim M \ge \dim G - 3r + 2.$$

Consider the case that $G$ is classical.
First suppose that $M$ is reducible on the natural module. 
Then $M$ must be the stabilizer of a nondegenerate space of dimension less
than $1/2$ the dimension of the space and $\dim M \ge \dim G - 3r +2$.
The only possibility is that $G$ is an orthogonal group,
$M$ is the stabilizer of a nondegenerate $1$-space and $p > 2$ (if $p=2$,
every semisimple element in $M$ is contained in $M^\circ$ and so the better
inequality applies).
 
If $G$ is of type $D_n$, $n \ge 4$, then  $M = 2\times M^\circ$ and so we see
that $\dim (V \cap M^3) \le 2 \dim M^\circ - 3(n-1)$.  Thus,
arguing as above,  $\dim M \ge \dim G -2$, a contradiction.

Suppose that $G$ is of type $B_n, n \ge 3$ and $M^\circ=D_n$.  Then
apply Corollary \ref{cor:disconnected} and Lemma \ref{lem:Dcentralizers}
and argue as above. This gives the inequality 
$$2 \dim G - 3n \le \dim G - \dim M  + 2 \dim M - 3(n-1)$$
or $\dim G \le \dim M + 3$, a contradiction.
 
If $M$ is irreducible but not almost simple, then either the natural module
is imprimitive, tensor decomposable or tensor induced. By inspection,
the only example 
with $\dim M \ge \dim G - 3r + 2$ is for $G=\Sp_4$ and $M=\SL_2 \wr 2$.
If $xyz=1$ with $x,y,z \in M$,
then at most two of the elements can live outside $M^\circ$. It follows
that at least one of the three elements has a $2$-dimensional centralizer
in $M^\circ$, whence the argument above gives
$\dim V(C_1, C_2, C_3) \cap M^3 \le \dim M - 3$ and we obtain a contradiction.

Suppose that $M$ is almost simple. 
Using the  bound $\dim M \ge \dim G - 3r + 2$ eliminates almost all
possibilities (by the results of L\"ubeck \cite{Lu01} where he explicitly
computes all irreducible modules of dimension less than $r_M^3/8$). The
exceptions are the cases where $(M,G)=(\SL_2=\SO_3,\SL_3)$, $(M,G)=(G_2,B_3)$
or $(M,G)=(\Sp_n,\SL_n)$ with $n=4,6$. In all these cases, 
$M$ is connected, so it follows that $\dim V(M) \le \dim G + \dim M - 3r_1$
and this is sufficient to show that $\dim V(M) < \dim V$.
 
Now let $G$ be an exceptional group. By \cite{LS03},
it follows that $\dim M  < \dim G - 3r +2$ for any maximal
reductive subgroup of $G$, whence the result follows.
\end{proof}
 
\begin{rem}
In the excluded case $G=\SL_3$ and $C_i$ containing elements of order~3,
the proof does not exclude that we might generate a subgroup of the normalizer
of the torus. Indeed that's what happens since it is well-known (and easy to
see) that the triangle group generated by three elements of order~3 is solvable.
\end{rem}

Another application of the Lang-Weil theorem \cite{LW}  gives:
 
\begin{cor}  \label{cor:beaulie}
 Let $G$ and the $C_i$ be as in Theorem~\ref{thm:beaualg}.
 If the $C_i$ are defined over $\FF_q$ with $q$ sufficiently large, then there
 exist $x_i \in C_i$ with product $1$ which generate $G(q)$ 
\end{cor}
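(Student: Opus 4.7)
The plan is to combine Theorem~\ref{thm:beaualg} with a Lang--Weil point count. Since each $C_i$ is $\FF_q$-rational, the variety $V:=V(C_1,C_2,C_3)$ is defined over $\FF_q$, and by Theorem~\ref{thm:dimtriples} it is geometrically irreducible of dimension $d:=2\dim G-3r$. The Lang--Weil estimate \cite{LW} therefore yields $|V(\FF_q)|=q^d(1+O(q^{-1/2}))$.

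The proof of Theorem~\ref{thm:beaualg} supplies a finite list $M_1,\ldots,M_s$ of representatives of the $G$-conjugacy classes of proper closed subgroups of $G$ falling under cases~(1) and~(2) of Lemma~\ref{lem:maxes} (for a fixed $m$ chosen large enough to exceed the universal bound $C=C(G)$ on the order of the bounded almost simple maximal subgroups of $G(q)$) such that $\dim(V(M_i)\cap V)<d$ for every~$i$. The union $\bigcup_iV(M_i)$ is $G$-invariant, hence Frobenius-stable and $\FF_q$-defined, so $U:=V\setminus\bigcup_iV(M_i)$ is a nonempty $\FF_q$-open subvariety of $V$ with $|U(\FF_q)|=q^d(1+o(1))$. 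Consequently $U(\FF_q)\ne\emptyset$ for $q$ large, and any $(x_1,x_2,x_3)\in U(\FF_q)$ satisfies $\langle x_1,x_2\rangle\le G(q)$ and, by Theorem~\ref{thm:beaualg}, also $\langle x_1,x_2\rangle\supseteq G(q')^h$ for some $q'>m$ and some $h\in G$.

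It remains to promote the inclusion $\langle x_1,x_2\rangle\le G(q)$ to equality. If $\langle x_1,x_2\rangle$ lay in a maximal subgroup $M$ of $G(q)$, then by the classification of maximal subgroups of finite groups of Lie type (Aschbacher for classical types, \cite{LS03} for exceptional ones), $M$ is either (a) the $\FF_q$-points of a positive-dimensional proper closed $\FF_q$-subgroup of $G$, (b) essentially a subfield subgroup $G(q_0)^h$ with $q=q_0^k$, $k\ge2$, or (c) an almost simple subgroup of order at most~$C$. Case~(a) is excluded by $x\in U$, and case~(c) by our choice $m\ge C$ together with $|\langle x_1,x_2\rangle|\ge|G(q')|>m$. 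The main obstacle is case~(b): triples in $V(\FF_q)$ contained in one conjugate $G(q_0)^h$ correspond via conjugation to triples in $V(\FF_{q_0})$, numbering $O(q_0^d)=O(q^{d/k})$; summing over the $O(q^{\dim G(k-1)/k})$ $G(q)$-conjugates and the $O(\log q)$ proper subfields yields a total of $O(q^{(\dim G(k+1)-3r)/k}\log q)=o(q^d)$, the key inequality being $\dim G>3r$, which holds for every simple algebraic group of rank $r>1$ by a direct check from the root-system data. Subtracting this negligible contribution from $|U(\FF_q)|\sim q^d$ produces $\FF_q$-rational triples generating $G(q)$ for $q$ sufficiently large.
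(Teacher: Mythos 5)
Your argument is correct and follows essentially the same route as the paper: the paper's proof of Corollary~\ref{cor:beaulie} likewise combines the irreducibility and point count from Theorem~\ref{thm:dimtriples} with the genericity statement of Theorem~\ref{thm:beaualg}, and then observes that triples conjugate into (possibly twisted) subfield groups $G(q_0)$ are too few to matter. Your explicit estimate $O\bigl(q^{\dim G(k-1)/k}\cdot q^{d/k}\bigr)$ per subfield, with the inequality $\dim G>3r$, is exactly the computation the paper leaves as ``easy to see.''
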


\begin{proof}
By the previous results, we just have to count the number of triples in $V$
which are conjugate to a triple in some subfield group  $G(q_0)$.
It is easy to see that these do not contribute enough to affect the result.
\end{proof}

\begin{rem}   \label{rem:larsen}
It follows by an easy argument that the results extend to arbitrary fields
(and so we obtain Theorem \ref{thm:alggroup triples}). 
Here is the sketch (we thank Michael Larsen for pointing this out to us).

Let $G$ be a simple simply connected algebraic group over an algebraically
closed field $k$. Let $C_i$, $1 \le i \le 3$, be semisimple regular conjugacy
classes of $G$.  Let $V=V(C_1,C_2,C_3)$ be the variety of triples
$(x_1, x_2, x_3)$ with
$x_i \in C_i$ and product $1$. Since the $C_i$ are semisimple classes this is
a closed subvariety of $G \times G \times G$.  Note that the argument
given in the proof of Theorem \ref{thm:dimtriples} shows that every
irreducible component of $V$ has dimension at least $2 \dim G - 3r$.

Note that $V$ is  defined over some finitely
generated subring $R$ of $k$. If $M$ is a generic maximal ideal of $R$, then
the reduction of $V(R)$ modulo $M$ will have the same dimension as $V$ and the
same number of irreducible components  over the algebraic
closure of $R/M$. Since $R/M$ is a
finite field, the result now follows from Theorem~\ref{thm:dimtriples}. 
\end{rem}

The proof of Theorem \ref{thm:beaualg} now goes through verbatim
and so holds as stated for $k$ of positive characteristic with $C_1, C_2, C_3$
torsion classes of regular semsimple elements.    

If the characteristic is $0$ or one of the $C_i$ consists of classes of
infinite order, the proof shows:

\begin{thm}   \label{thm:triplegen2}
 Assume that $G$ is a simple simply connected algebraic group of rank at
 least $2$ over an algebraically closed field $k$ of characteristic $p \ge 0$.
 Let $C_i, 1 \le i \le 3$ be semisimple regular conjugacy classes of $G$.
 Assume either that $p=0$ or one of the classes $C_i$ consists of elements
 of infinite order. If $(x_1, x_2, x_3)$ is a generic triple in
 $V(C_1, C_2, C_3)$, then $\langle x_1, x_2 \rangle$ is Zariski dense in $G$. 
\end{thm}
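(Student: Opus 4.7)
The plan is to mimic the proof of Theorem~\ref{thm:beaualg} almost verbatim, adapting only the step that invokes the Chevalley-subgroup alternative of Lemma~\ref{lem:maxes}. First I would observe, via Theorem~\ref{thm:open}(a) in characteristic zero and its analogue in positive characteristic (the same argument based on irreducibility on a finite list of representations still yields openness), that the set of pairs with Zariski dense image in $G$ is open in $G\times G$. Consequently
$$W:=\{(x_1,x_2,x_3)\in V(C_1,C_2,C_3)\mid \overline{\langle x_1,x_2\rangle}=G\}$$
is open in the irreducible variety $V:=V(C_1,C_2,C_3)$ of dimension $2\dim G-3r$ given by Theorem~\ref{thm:dimtriples}. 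To prove $W$ is nonempty it suffices to show that the union of the $V(M)$, taken over a system of representatives for the finitely many conjugacy classes of maximal proper closed subgroups $M\le G$ supplied by Lemma~\ref{lem:maxes} (and its characteristic-zero counterpart, which follows from \cite{LS03} together with \cite{Mar03}), does not cover $V$; since that union is closed, this amounts to the strict inequality $\dim V(M)<\dim V$ for each representative.

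For $M$ of positive dimension, the dimension estimate is precisely the one carried out in the proof of Theorem~\ref{thm:beaualg}: one applies Corollary~\ref{cor:upperbound} for $M$ connected and Corollary~\ref{cor:disconnected} for the almost-simple disconnected case, together with Lemmas~\ref{lem:classes} and~\ref{lem:centralizers}, to obtain the required strict inequality in every case except the triangle-group exception $G=\SL_3$ with all three $C_i$ consisting of elements of order~$3$ modulo $Z(G)$, which the statement should be read as excluding in the same manner as Theorem~\ref{thm:beaualg} (see the preceding remark).

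The genuinely new input is in disposing of cases~(2) and~(3) of Lemma~\ref{lem:maxes} under the weaker conclusion. If $M$ is finite, then $V_M:=V\cap M^3$ is a finite set, so $\dim V(M)\le\dim G-\dim N_G(M)\le\dim G$, and a direct check against the list of simple types shows $\dim G>3r$ for every simple $G$ of rank at least~$2$, yielding $\dim V(M)<2\dim G-3r$. If $p>0$ and $M$ falls into case~(3), so that $M$ contains a finite Chevalley subgroup $G(q)$ and in particular is locally finite, then every element of $M$ has finite order; but the hypothesis that some $C_i$ consists of elements of infinite order forces $C_i\cap M=\emptyset$ and hence $V_M=\emptyset$. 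In characteristic zero case~(3) simply does not occur, and finite maximal $M$ are handled as above.

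The main obstacle is not conceptual but a matter of locating, in the existing literature, the characteristic-zero replacement for Lemma~\ref{lem:maxes}: one needs that every proper closed subgroup of $G$ over an algebraically closed field of characteristic~$0$ lies in one of finitely many conjugacy classes of positive-dimensional maximal closed subgroups, or else is a Lie-primitive finite subgroup drawn from finitely many conjugacy classes. This is standard (Liebeck--Seitz \cite{LS03} for the positive-dimensional alternative, together with Jordan-type bounds and \cite{Mar03} for the finite alternative), but it must be invoked explicitly, since the finiteness of the supply of $M$ is precisely what keeps the union $\bigcup_M V(M)$ a proper closed subvariety of the irreducible variety $V$.
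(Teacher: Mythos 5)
Your proposal takes the paper's own route: the paper obtains this theorem precisely by observing that the proof of Theorem~\ref{thm:beaualg} goes through (with irreducibility of $V(C_1,C_2,C_3)$ over an arbitrary algebraically closed field supplied by Theorem~\ref{thm:dimtriples} and Remark~\ref{rem:larsen}), and your covering of the non-generating locus by the $V(M)$ for the finitely many classes of positive-dimensional maximal and bounded Lie-primitive finite subgroups, followed by the same dimension estimates, is exactly that argument; your observation that the $\SL_3$ triple-of-order-$3$ configuration must still be excluded is also consistent with the paper's intent.

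Two blemishes, neither fatal. The claim that the set of pairs with Zariski-dense closure is open in $G\times G$ in positive characteristic is false once $k$ properly contains $\overline{\FF_p}$: pairs of points of $G(\overline{\FF_p})$ generate finite, hence non-dense, subgroups, and such pairs are Zariski dense in $G\times G$, so the dense-generation locus has empty interior (this is why the paper records the open-subvariety refinement only in characteristic $0$, via Theorem~\ref{thm:open}(a)). Fortunately you do not need this: your containment of the non-dense triples of $V$ in the union of the relevant $V(M)$, together with the strict dimension inequalities and irreducibility of $V$, already places them in a proper closed subvariety of $V$, which is exactly the genericity assertion. Second, ``contains $G(q)$, hence locally finite'' is not a valid implication; the correct (and immediate) point is that a proper closed subgroup not lying in any positive-dimensional proper closed subgroup has trivial identity component, hence is finite and torsion, so it cannot meet a class of elements of infinite order, while in characteristic $0$ such subgroups have bounded order by Jordan's theorem, fall into finitely many classes, and are eliminated by $\dim G>3r$ for rank at least $2$.
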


In characteristic $0$, it follows that the set of triples in this variety that
generate a dense subgroup contains a nonempty open subvariety. 

%%The sufficiently large depends on the choice of the $C_i$. By being a bit
%%more careful, one sees that it really only depends on the rank of $G$ (and
%%not even on the characteristic).

%%% char 0, elements of infinite order in char p
%%% increasing rank, uniformity

We close this section by demonstrating how the result of
Theorem~\ref{thm:alggroup triples} can be extended by using asymptotic
estimates on character values:

\begin{thm}  \label{thm:moretriples} 
 Let $G$ be a simple, simply connected algebraic group of exceptional type.
 Let $C_1$ and $C_2$ be conjugacy classes of regular semisimple elements in $G$.
 Let $C_3$ be a conjugacy class of $G$ such that $x_3 \in C_3$ has centralizer
 dimension $\dim C_G(x_3) \le d_G$ with $d_G$ as in Table~\ref{tab:dG}. Then
 the closure of $V(C_1,C_2,C_3)$ is irreducible of
 dimension $2(\dim G - \rk(G))  - \dim C_G(x_3)$.
\end{thm}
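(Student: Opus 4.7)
The plan is to adapt the strategy of Theorem~\ref{thm:dimtriples}: work over $\overline{\FF_p}$, count $\FF_q$-points via the Frobenius character formula, and then invoke the Lang--Weil theorem \cite{LW} to deduce irreducibility and the dimension. The extension to arbitrary algebraically closed base fields would then follow by the descent argument of Remark~\ref{rem:larsen}.

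First I would establish the lower bound on the dimension. Exactly as in Theorem~\ref{thm:dimtriples}, writing $V(C_1,C_2,C_3)$ as the intersection inside the smooth irreducible variety $G^3$ of the closed subvariety $C_1\times C_2\times C_3$ (of codimension $2r+\dim C_G(x_3)$) with the codimension $\dim G$ subvariety cut out by $x_1x_2x_3=1$, the bound of \cite[p.~146]{Weil} forces every irreducible component to have dimension at least $2(\dim G-r)-\dim C_G(x_3)$.

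For the matching upper bound and irreducibility, fix a large prime power $q$ over which the $C_i$ are defined and apply the character formula
$$|V(C_1,C_2,C_3)(q)|=\frac{|C_1(q)|\,|C_2(q)|\,|C_3(q)|}{|G(q)|}\sum_{\chi\in\Irr(G(q))}\frac{\chi(x_1)\chi(x_2)\chi(x_3)}{\chi(1)}.$$
The prefactor has order $q^{2(\dim G-r)-\dim C_G(x_3)}$, so the task reduces to controlling the character sum. For the two regular semisimple classes, Theorem~\ref{thm:bound-dis} gives the uniform bound $|\chi(x_i)|\le|W(G)|$ for $i=1,2$, while for $x_3$ the hypothesis $\dim C_G(x_3)\le d_G$ (with $d_G$ from Table~\ref{tab:dG}) is calibrated so that the asymptotic character bounds available for exceptional groups of Lie type---of the form $|\chi(x_3)|\ll\chi(1)^{1-\epsilon}$ for an $\epsilon=\epsilon(G,d_G)>0$---combined with the Witten-zeta-type estimate $\sum_{\chi\ne 1}\chi(1)^{-\epsilon}=O(q^{-1/2})$ (valid for exceptional types since the smallest non-trivial character degree grows like a positive power of $q$), yield
$$\sum_{\chi\ne 1}\frac{|\chi(x_1)\chi(x_2)\chi(x_3)|}{\chi(1)}\le|W(G)|^{2}\sum_{\chi\ne 1}\chi(1)^{-\epsilon}=O(q^{-1/2}).$$
Hence $|V(C_1,C_2,C_3)(q)|=q^{2(\dim G-r)-\dim C_G(x_3)}(1+O(q^{-1/2}))$, and Lang--Weil delivers irreducibility together with the stated dimension.

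The main obstacle is the third-class character bound: unlike in Theorem~\ref{thm:dimtriples}, the element $x_3$ need not be semisimple regular, so Theorem~\ref{thm:bound-dis} does not apply and one must import asymptotic bounds of Liebeck--Shalev type together with the explicit knowledge of character degrees for the exceptional groups (via L\"ubeck's tables and \cite{LS03}) in order to verify that the numerical thresholds $d_G$ of Table~\ref{tab:dG} are indeed enough to beat the summed $\chi(1)^{-1}$ tail. A secondary technical point is that $C_3(q)$ can split into several $G(q)$-rational classes; this only affects the leading term by a bounded factor and so does not alter the Lang--Weil conclusion on dimension and irreducibility.
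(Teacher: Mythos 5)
Your overall architecture (count $\FF_q$-points via the character formula, apply Lang--Weil, use the Weil intersection bound for the lower bound on components, and descend to arbitrary fields as in Remark~\ref{rem:larsen}) is the same as the paper's, but the step where the content of the theorem actually lives is not carried out, and the estimates you invoke do not close it. You propose to bound the tail by $|\chi(x_3)|\ll\chi(1)^{1-\epsilon}$ together with $\sum_{\chi\ne1}\chi(1)^{-\epsilon}=O(q^{-1/2})$. The latter is false for an unspecified $\epsilon>0$: there are of order $q^{\rk(G)}$ irreducible characters, so such an estimate needs $\epsilon$ at least roughly $\rk(G)/a$ where $q^{a}$ is the smallest nontrivial degree, and you give no bound of the former type, with an $\epsilon$ of that size tied to the hypothesis $\dim C_G(x_3)\le d_G$, that was available; nor do you ever say how $d_G$ enters your estimate. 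The mechanism that makes the specific values in Table~\ref{tab:dG} work is different and more precise: (i) the second orthogonality relation gives $|\chi(x_3)|\le|C_{G(q)}(x_3)|^{1/2}\le c\,q^{d_G/2}$ for \emph{every} $\chi$ --- this is where $d_G$ enters; (ii) Lusztig's Jordan decomposition bounds the number of characters by $c\,q^{\rk(G)}$; (iii) by L\"ubeck \cite{Lu01b}, every character \emph{outside the Lusztig series of isolated elements} has degree at least $c\,q^{e_G}$ --- note that $e_G$ is much larger than the exponent of the overall minimal nontrivial degree, and with the latter (which is all your ``smallest non-trivial character degree grows like a positive power of $q$'' gives) the inequality fails badly; (iv) the boundedly many characters lying in series of isolated elements are handled separately by Gluck's bound $|\chi(x_3)|/\chi(1)=o(1)$; and (v) the numerical check $d_G<2(e_G-\rk(G))$ then yields $q^{\rk(G)}\cdot q^{d_G/2}/q^{e_G}=o(1)$. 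For $E_8$ this reads $97<2(57-8)=98$, so the margin is one; there is no room for the cruder bounds you appeal to.

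The second gap is your treatment of the splitting of $C_3$ over $\FF_q$. Lang--Weil detects the number of top-dimensional components through the \emph{leading coefficient} of the point count, so declaring that the splitting ``only affects the leading term by a bounded factor'' concedes exactly what you need for irreducibility: a leading constant different from $1$ would mean more than one component. One must choose $q$ so that Frobenius acts trivially on $A(x_3)=C_G(x_3)/C_G(x_3)^{\circ}$, observe that $C_3$ then splits into $e$ rational classes of sizes $a_i^{-1}q^{\dim G-d}+O(q^{\dim G-d-1})$ with $\sum_i a_i^{-1}=1$ (the class equation of $A(x_3)$), and sum the character formula over these classes; only then does the trivial character contribute $q^{m}+O(q^{m-1})$ with constant exactly $1$, where $m=2(\dim G-\rk(G))-\dim C_G(x_3)$. (A further small point: the statement concerns the closure of $V(C_1,C_2,C_3)$; since $C_3$ is open in its closure, the two point counts differ by $O(q^{m-1})$, which should be said before invoking Lang--Weil, and the Weil lower bound on all components is what upgrades ``unique top-dimensional component'' to irreducibility.)
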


\begin{table}[htbp] 
  \caption{Bounds for centralizer dimensions and character degree polynomials}
  \label{tab:dG}
\[\begin{array}{|l|ccccc|} 
\hline
 G& G_2& F_4& E_6& E_7& E_8\\
\skipa \hline \hline
 d_G& 5& 21& 19& 39& 97\\
 e_G& 5& 15& 16& 27& 57\\ \hline
\end{array}\]
\end{table}

\begin{proof}
Let $V$ denote the closure of $V(C_1,C_2,C_3)$.
First assume that we are over the algebraic closure of a finite field. 
For $q$ a prime power, let $G(q)$ denote the group of fixed points of $G$
under a standard Frobenius endomorphism $F$ of $G$ corresponding to an
$\FF_q$-rational structure on $G$.   Choose $q$ such that the $C_i$ are all
defined over $\FF_q$, $x_3 \in G(q)$ and that each component of $C_G(x_3)$
is invariant under $F$.   Let $x_i \in C_i(q)$ for $i=1,2$. 
\par
We claim that $|V(C_1,C_2,C_3)(q)|= q^m(1 + o(1))$, where
$m=2(\dim G - \rk(G))  - \dim C_G(x_3)$. 
\par
By Lang's Theorem  $C_3$ splits into $e$ classes in $G(q)$ where $e$ is the
number of conjugacy classes of $A(x_3):=C_G(x_3)/C_G(x_3)^{\circ}$ (since $F$
acts trivially on the set of components, see e.g. \cite[Thm.~21.11]{MT10}).
Let $u_1, \ldots, u_e$ be representatives for the conjugacy classes of
$A(x_3)$. Let $d = \dim C_G(x_3)$. Then the sizes of the $G(q)$-conjugacy
classes in $x^G \cap G(q)$ will be $a_i^{-1} q^{\dim G-d} + O(q^{\dim G-d-1})$
($1\le i\le e$), where
$a_i$ is the order of the centralizer of $u_i$. Note that $\sum a_i^{-1} = 1$
(since  $A(x_3)$ is the disjoint union of its conjugacy classes). 
\par
Thus,
$$
|V(C_1, C_2, C_3)(q)| = \frac{|C_1(q)||C_2(q)|}{|G(q)|^2} 
 \sum_{i=1}^e |u_i^{G(q)}| \sum_{\chi \in \Irr(G)}  \frac{\chi(x_1)\chi(x_2)\chi(u_i)}{\chi(1)}.
$$
Since $\sum a_i^{-1}=1$,  the contribution from the trivial character is $q^m + O(q^{m-1})$. We argue as in the proof of Theorem~\ref{thm:dimtriples} and
show that $\sum_{\chi \ne 1}  |\chi(u_i)| \chi(1)^{-1}\le  o(1)$. Note that
by the result of Gluck \cite{Gluck} $|\chi(u_i)|/\chi(1)=o(1)$ for all
$1\ne\chi\in\Irr(G(q))$ and $1\ne x\in G(q)$, so in the above sum we may
ignore any bounded number of non-trivial characters. 
\par
By Lusztig's Jordan decomposition of characters, $\Irr(G)$ is the disjoint
union of Lusztig series, indexed by semisimple classes in the dual group
$G^*(q)$, each of size bounded only in terms of $r=\rk(G)$.
Since $G^*(q)$ has at most $c_1q^r$ semisimple conjugacy classes (see
\cite[Thm. 26.10]{MT10}), for some
$c_1>0$, we conclude that $|\Irr(G(q)|\le c_2 q^r$. By the orthogonality
relations we have $|\chi(x_3)|\le c_3q^{d_G/2}$ for all $\chi\in\Irr(G(q))$.
The smallest character degree of $G(q)$ not lying in the Lusztig series of
an isolated element is of the form $c_4q^{e_G}$ with $e_G$ as given in
Table~\ref{tab:dG}, for example by \cite{Lu01b}.
The claim now follows by Theorem~\ref{thm:bound-dis} since $d_G<2(e_G-r)$.
\par
Since $V(C_1, C_2, C_3)$ is open in $V$ (as $C_3$ is open in its closure, see
e.g. \cite[Prop. 5.4]{MT10}),
it follows that $|V(q)| = |V(C_1, C_2, C_3)(q)|+O(q^{m-1}) = q^m(1 + o(1))$. 
We complete the proof arguing as we did in Theorem \ref{thm:dimtriples}. By
Lang--Weil \cite{LW} $V$  has exactly one irreducible component with
dimension $m$ (and none of dimension greater than $m$).
By \cite[p.~146]{Weil},  every irreducible component  has dimension at least
$m$ and so the variety is irreducible.
\par
The result for arbitrary algebraically closed  fields follows as in
Remark~\ref{rem:larsen}.
\end{proof}

If $C_3$ is a semisimple class, the proof is a bit easier as
$V(C_1, C_2, C_3)$ is closed and $C_G(x_3)$ is connected. 
Note that the only non-trivial semisimple elements in $E_8$ with centralizer
dimension larger than~$97$ are involutions with centralizer of type $D_8$
or $E_7\times A_1$, and elements in a 1-dimensional torus $T_1$  with centralizer
$T_1E_7$.

%%%%%%%%%%%%%%%%%%%%%%%%%%%%%%%%%%%%%%%%%%%%%%%%%%%%%%%%%%%%%%%%%%%%%%%%%
\section{Generating Conjugacy Classes}   \label{sec:genclass}

We now return to finite groups.
In the proof of Theorem \ref{thm:beau}, we showed that in many finite simple
groups there exist conjugacy classes $C$ and $D$ such that
$G$ is generated by any pair of elements in $C \times D$. Moreover, in almost
all the cases $C$ and $D$ were $\Aut(G)$-invariant. We investigate this
further in this section. We will prove the following version of
Theorem~\ref{thm:gen classes}

\begin{thm}  \label{thm:strong gen classes}
 Let $G$ be a finite almost simple group with socle $S$. There exist
 conjugacy classes $C$ and $D$ of $G$ such that $S \le \langle c, d \rangle$
 for all $(c,d) \in C \times D$. Moreover aside, from the cases
 $S=\OO^+_8(q)$, $q \le 3$, we may take $C,D \subset S$.  In all cases,
 we may assume that $D$ is contained in $S$. 
\end{thm}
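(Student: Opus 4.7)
The plan is to leverage and strengthen the constructions already used for Theorem~\ref{thm:beau}. In each family treated in Sections~\ref{sec:exc}--\ref{sec:altspor}, conjugacy classes $C_1,C_2$ of $S$ were chosen so that (a) the relevant structure constant is nonzero, and (b) no maximal subgroup of $S$ meets both classes. Condition (b) --- which is exactly what made the structure-constant estimate succeed --- immediately yields the stronger statement that for every $c \in C_1$ and every $d \in C_2$ the subgroup $\langle c,d \rangle$ lies in no maximal subgroup of $S$, hence equals $S$. So the first step is to revisit each case and certify that the pairs $(C_1,C_2)$ already chosen have this ``no common overgroup'' property, repairing where necessary using Theorems~\ref{thm:excelt} and~\ref{twozsig} (and Corollary~\ref{cor:twozsig2}) for the exceptional and classical groups, the ATLAS for the sporadic groups, and a standard cycle-type argument (Jordan plus primitivity) for the alternating groups.

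Second, I would extend from $S$ to an arbitrary almost simple overgroup $G$ by setting $C := C_1^{G}$ and $D := C_2^{G}$. A maximal subgroup $M \le G$ with $S \not\le M$ satisfies $M \cap S < S$, $M = N_G(M \cap S)$, and the projections of $M \cap S$ into $S$ are proper subgroups of $S$ permuted among themselves by $G/S$. Since the overgroup lists produced by Theorem~\ref{twozsig} and Corollary~\ref{cor:twozsig2} are ``arithmetic'' in nature (tori, parabolic subgroups, extension-field and subfield subgroups), they are invariant under the outer automorphism group, and one can pick $C_1, C_2$ to be $\Aut(S)$-invariant unions of $S$-classes. Thus the $G$-classes $C$ and $D$ still avoid every such $M$ and we obtain $S \le \langle c,d \rangle$ for all $(c,d) \in C \times D$. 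A subtle point here is that enlarging $C_1$ to a $G$-class may add elements to the power-orbit of $C_1$; one must verify these added elements remain outside every relevant maximal subgroup, which for regular semisimple classes of coprime order $o(c), o(d)$ is automatic.

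The main obstacle --- and the source of the $\OO_8^+(q)$ exception for $q \le 3$ --- is triality. The graph automorphism of order~$3$ of $\OO_8^+(q)$ permutes three non-$S$-conjugate maximal subgroups of shape $\OO_7(q)$, and similarly other $S_3$-orbits of natural maximal subgroups. For small $q \in \{2,3\}$ the list of available regular semisimple classes of $S$ is short enough that no single pair of such classes can simultaneously avoid every member of these triality orbits once $G$ contains triality. The workaround is to move $C$ outside $S$: take $C$ to be a $G$-class in $G \setminus S$ whose third power lies in a well-chosen regular semisimple class of $S$ (lying in none of the triality-conjugate $\OO_7(q)$'s), keeping $D \subset S$ a class of regular semisimple elements not meeting any common overgroup of $C^3$. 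For these two specific values of $q$ the group orders are small enough that the remaining verifications can be carried out by explicit computation in the $8$-dimensional natural representation. This last step is where I expect the genuine technical work to sit, and it is the only reason $C \subset S$ cannot be achieved uniformly.
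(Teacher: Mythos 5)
Your generic strategy is essentially the paper's: reuse the two classes of Table~\ref{tab:classtorus} (and the overgroup machinery of Theorem~\ref{twozsig}/Corollary~\ref{cor:twozsig2}) where the two-class avoidance was already established, pass to $G$-classes defined by element orders so that the argument survives in any almost simple overgroup, and patch the small cases by hand. But be aware that your premise ``condition (b) is exactly what made the structure-constant estimate succeed'' is false for a large part of Sections~\ref{sec:exc}--\ref{sec:altspor}: for the exceptional groups the argument of Theorem~\ref{thm:triple} is a counting comparison against subgroup orders (the chosen class does lie in maximal subgroups), and for $\PSL_3,\PSU_3,\PSp_4$, the small linear/unitary groups, $\OO_7$, etc.\ the third class is unipotent and the triples are only excluded from parabolic-type subgroups, so no two-class avoidance is available there. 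Consequently the ``repair'' you allow for is not marginal: the paper has to choose genuinely new pairs of classes for $\PSL_n$, for $\PSU_n$ with $n\le 7$, for $\PSp_4$, for $\OO_{2n+1}$ with $n\le 6$, and for many small fields, often via the unique-maximal-subgroup/derangement device (Lemma~\ref{lem:unique}) and ad hoc order arguments. This is a quantitative, not conceptual, difference, and I would accept it as the same route.

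The genuine gap is your treatment of $S=\OO_8^+(q)$, $q\le 3$, when $G$ induces triality. Your mechanism is to take $C\subset G\setminus S$ with $c^3$ in a well-chosen class of $S$ and to demand that $D\subset S$ meet no common overgroup of the class of $c^3$; generation is then supposed to follow from $S\le\langle c^3,d\rangle$. But both the set $\{c^3\mid c\in C\}$ and $D$ are $G$-stable subsets of $S$, hence triality-stable (and $\Aut(S)$-stable when $G=\Aut(S)$), so your requirement is precisely the existence of an $\Aut$-invariant pair of subsets of $S$ all of whose pairs generate --- exactly what fails here and what causes the exception in the first place (for $q=2$ the paper records that a computer search rules this out; your own heuristic explanation of the exception says the same). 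So the reduction to $\langle c^3,d\rangle$ cannot close the argument. The correct mechanism must use the outer element itself, not a power of it: one picks a class of elements of order $20$ (for $q=3$) resp.\ $15$ (for $q=2$) in $S$ whose three $S$-classes are permuted by triality and shows that no maximal subgroup of $S$ meets two distinct such $S$-classes (via the $(\fA_6\times\fA_6).2^2$, resp.\ $(\fA_5\times\fA_5).2^2$, analysis). If $G$ induces no triality, two of these $S$-classes are distinct $G$-classes inside $S$ and one is done with both classes in $S$; if $G$ does induce triality, one pairs the fused class $C\subset S$ with the $G$-class $D$ of an outer automorphism $d$ of $3$-power order and concludes from $S=\langle c,c^{\,d}\rangle\le\langle c,d\rangle$, since $c$ and $c^{\,d}$ lie in different $S$-classes. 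Without this (or an equivalent) use of conjugation by the outer element, your proof of the exceptional case does not go through.
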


If $G$ contains a triality automorphism of $S=\OO^+_8(2)$, the classes
$C$ and $D$ cannot both be chosen to be contained in $S$.
It seems likely this is also true for $q=3$.

In particular, a special case of the result is the following:

\begin{cor}   \label{cor:strong gen classes} 
 Let $G$ be a finite simple group other than $\OO_8^+(q)$, $q \le 3$.
 There exist subsets $C, D$ of $G$ each invariant under $\Aut(G)$
 such that $S = \langle c, d \rangle$ for each $(c,d) \in C \times D$.
\end{cor}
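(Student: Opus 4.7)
The corollary is meant to be an essentially formal consequence of Theorem~\ref{thm:strong gen classes}, and my proof plan is to extract it directly by applying that theorem to a suitable almost simple overgroup. Specifically, given a finite simple group $G$ outside the excluded list, I would take the almost simple group $\hat{G}:=\Aut(G)$, whose socle is $S=G$, and invoke Theorem~\ref{thm:strong gen classes} with this choice. Since $G$ is not of type $\OO_8^+(q)$ for $q\le3$, the ``aside from'' clause of that theorem produces conjugacy classes $C$ and $D$ of $\hat{G}$ that are \emph{both} contained in $S=G$ and satisfy $S\le\langle c,d\rangle$ for every $(c,d)\in C\times D$.

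The key observation is then that a conjugacy class of $\Aut(G)$ is by definition stable under the conjugation action of $\Aut(G)$, so $C$ and $D$ are automatically $\Aut(G)$-invariant subsets of $G$. For the generation statement, note that since $c,d\in G$ we have $\langle c,d\rangle\le G$; combined with the inclusion $G=S\le\langle c,d\rangle$ provided by the theorem, this forces equality $\langle c,d\rangle=G$, as required.

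The only real obstacle in this reduction is the $\OO_8^+(q)$, $q\le3$ case, where triality prevents $C$ (or $D$) from being chosen inside the socle when working inside $\Aut(G)\supsetneq\mathrm{Inndiag}(G).\langle\phi\rangle$; these are precisely the groups already excluded from the corollary's hypothesis, so no extra work is needed. All the genuine content therefore sits in Theorem~\ref{thm:strong gen classes}, and once that theorem is in hand the corollary is a one-line deduction. In writing it up I would simply remark that applying the theorem to $\Aut(G)$ yields $\Aut(G)$-conjugacy classes $C,D\subseteq G$ with the required property, and conclude that these serve as the asserted $\Aut(G)$-invariant generating subsets.
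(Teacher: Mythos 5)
Your reduction is exactly what the paper intends: Corollary~\ref{cor:strong gen classes} is stated there simply as a special case of Theorem~\ref{thm:strong gen classes}, obtained by applying that theorem to the almost simple group $\Aut(G)$ with socle $S=G$, so that the classes $C,D\subset S$ are $\Aut(G)$-invariant and generation inside $G$ follows as you say. Your write-up is correct and takes essentially the same (one-line) approach as the paper.
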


One way of producing such classes is rather obvious:

\begin{lem}   \label{lem:unique}
 Let $G$ be a finite group with $g \in G$. Assume that $g$ is contained in a
 unique maximal subgroup $M$ of $G$. Let $C=g^G$. Let $D$ be the set of
 derangements of $G$ in the permutation action on $G/M$.
 Then $G = \langle g, h \rangle$ for any $h \in D$.
\end{lem}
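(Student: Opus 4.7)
The plan is to argue by contradiction. Suppose that $h \in D$ but $H := \langle g, h\rangle$ is a proper subgroup of $G$. Then $H$ must be contained in some maximal subgroup $N$ of $G$. In particular $g \in N$, so by the hypothesis that $M$ is the \emph{unique} maximal subgroup of $G$ containing $g$, we are forced to conclude $N = M$, and hence $h \in M$.

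To derive a contradiction, I would invoke the definition of the derangement set $D$. An element $h \in G$ fixes the coset $xM$ in the natural left action of $G$ on $G/M$ precisely when $x^{-1}hx \in M$, so $h \in D$ is equivalent to saying that no $G$-conjugate of $h$ lies in $M$. Taking $x = 1$ we get $h \notin M$, directly contradicting the previous paragraph. Thus $\langle g, h \rangle = G$, as required.

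There is no real obstacle here: the statement is essentially a reformulation of the uniqueness hypothesis combined with the definition of a derangement, and no character theory or classification input is needed. I would also note the (implicit) strength of the lemma for the application to Theorem~\ref{thm:strong gen classes}: since $D$ is a union of conjugacy classes and $C = g^G$, conjugating the argument above by any $x \in G$ shows that in fact $G = \langle c, d \rangle$ for every $c \in C$ and every $d \in D$, not merely for the distinguished generator $g$. The real work will therefore lie in producing, for each almost simple $G$, an element $g$ lying in a unique maximal subgroup $M$ together with a non-empty derangement set $D$ on $G/M$ — but that task is separate from the present lemma.
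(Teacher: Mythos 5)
Your argument is correct and is exactly the evident one the paper has in mind — indeed the paper states the lemma without proof, since (as you note) it is just the uniqueness hypothesis combined with the definition of a derangement, and your closing remark about conjugating to get all of $C\times D$ matches how the lemma is actually applied.
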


Of course, $D$ is always nonempty (by the well-known observation of Jordan).
Moreover, if the $G$-class of $M$ is $\Aut(G)$-invariant, then we can take
$C=g^{\Aut(G)}$ and $D$ will also be $\Aut(G)$-invariant. Such elements exist
in many (but not all) finite simple groups.

We start the proof of Theorem~\ref{thm:strong gen classes} by showing that
alternating groups satisfy the result.

\begin{prop}   \label{prop:altgen}
 Let $G=\fA_n$, $n \ge 5$. There exist conjugacy classes $C,D$ of $\fS_n$
 contained in $\fA_n$ such that $G = \langle x, y \rangle$ for any $x \in C,
 y \in D$.
\end{prop}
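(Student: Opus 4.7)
The plan is to exhibit $\fS_n$-conjugacy classes $C, D \subseteq \fA_n$ determined by cycle type, and to verify that $H := \langle x, y \rangle = \fA_n$ for every $x \in C$ and $y \in D$. The principal tool is Jordan's classical theorem: a primitive subgroup of $\fS_n$ containing a $p$-cycle with $p$ prime and $p \le n - 3$ must contain $\fA_n$.

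For every $n \ge 8$, Bertrand's postulate applied to the interval $(n/2, n-3]$ provides an odd prime $p$. When $n$ is odd, I would let $C$ be the class of $n$-cycles and $D$ the class of cycle type $(p, 1^{n-p})$. When $n$ is even, I would take $C$ to be the class of cycle type $(n-1, 1)$ and $D$ to be the class of cycle type $(p, n-p)$, of sign $(-1)^{n-2}=1$. Using that cycles of odd length are even permutations (and that $n$ is even in the second case), one checks that $C, D \subseteq \fA_n$, so the generated subgroup $H$ lies in $\fA_n$ automatically.

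For transitivity of $H$: either $\langle x\rangle$ alone is transitive (the odd case) or $y$ has full support and therefore moves the unique fixed point of $x$ (the even case). A single $p$-cycle lies in $H$, namely $y$ itself in the odd case and $y^{n-p}$ in the even case (here $\gcd(p, n-p) = 1$ because $p < n < 2p$). I would then include a short argument that a transitive subgroup of $\fS_n$ containing a $p$-cycle with $p > n/2$ is automatically primitive: a nontrivial block system would have block size $k$ dividing $n$ with $k \le n/2 < p$, so the $p$-cycle could neither induce a $p$-cycle on blocks (which would require $pk \le n$) nor fix all blocks setwise (its $p$-orbit on points would then have to fit inside a single block of size $< p$). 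Jordan's theorem then yields $H \ge \fA_n$, so $H = \fA_n$.

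The main remaining obstacle is the small cases $n \in \{5, 6, 7\}$, for which no prime lies in the interval $(n/2, n-3]$. For $n = 7$ the pair $C$ of $7$-cycles and $D$ of $3$-cycles still works, since $\langle x\rangle$ of prime order~$7$ is already primitive and $3 \le 4 = n - 3$. For $n = 5$ the same choice generates $\fA_5$ by inspection of the subgroup lattice (any subgroup containing elements of orders $5$ and $3$ has order divisible by $15$, and no proper such subgroup of $\fA_5$ exists). The case $n = 6$, essentially $\PSL_2(9)$, is the most delicate: I would pick $C$ to be the $(5,1)$-class and $D$ the $(3,3)$-class, both $\fS_6$-classes inside $\fA_6$, and verify against the known list of maximal subgroups of $\fA_6$ (two classes of $\fA_5$, two classes of $\fS_4$, and $3^2 \rtimes 4$) that none contains both a $5$-cycle and a $(3,3)$-element. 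This finite verification is the only step not covered by the general Bertrand-plus-Jordan strategy.
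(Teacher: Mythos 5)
Your general strategy is sound for $n\ge 7$ and for $n=5$: the parity checks, the coprimality giving a $p$-cycle power, the block-system argument for primitivity of a transitive group containing a $p$-cycle with $p>n/2$, and the appeal to Jordan's theorem with $p\le n-3$ are all correct (and for even $n$ this is a genuinely different, and perfectly workable, route from the paper, which instead uses a single element with two coprime cycles of length about $n/2$, Williamson's theorem, and a derangement class attached to the unique intransitive maximal overgroup). The genuine gap is the case $n=6$, which you defer to a ``finite verification'': that verification would in fact fail for your chosen classes. Among the two classes of maximal subgroups $\fA_5<\fA_6$ there is, besides the point stabilizers, the class of \emph{transitive} subgroups $\fA_5\cong\PSL_2(5)$. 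In such a transitive $\fA_5$ every element of order $5$ is a $5$-cycle (any order-$5$ element of $\fS_6$ has type $(5,1)$), while every element of order $3$ is fixed-point-free — its point stabilizer in that action is $D_{10}$, which has no $3$-elements — hence of type $(3,3)$. So this maximal subgroup meets both your classes $C$ and $D$; and since $\fA_5$ is generated by an element of order $5$ together with a suitable element of order $3$, there exist pairs $(x,y)\in C\times D$ with $\langle x,y\rangle\cong\fA_5\ne\fA_6$. As the proposition demands generation for \emph{every} pair, the $(5,1)/(3,3)$ choice does not work.

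The repair is essentially what the paper does for $n=6$: keep $C$ the class of $5$-cycles, but take $D$ to be the class of elements of order $4$ in $\fA_6$ (cycle type $(4,2)$, a single $\fS_6$-class inside $\fA_6$). The only maximal subgroups of $\fA_6$ of order divisible by $5$ are the two classes of $\fA_5$, and $\fA_5$ contains no elements of order $4$; hence every pair from $C\times D$ generates. With this substitution your argument for all remaining $n$ stands as written.
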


\begin{proof}
First suppose that $n=4m\ge 8$ is divisible by~4. Let $g$ be a product of
a $2m+1$ cycle and a $2m-1$ cycle. Thus, some power of $g$ is a $2m-1$ cycle.
Let $M$ be a maximal subgroup containing $g$. We show that $M$ must be
intransitive. Since there is a unique intransitive such maximal subgroup, the 
result will follow by Lemma~\ref{lem:unique}.

Since $\gcd(2m+1, 2m-1)=1$,  $M$ cannot be transitive and imprimitive.
If $M$ is primitive, it follows by Williamson \cite{Wi73} that $M$ cannot exist.

Next suppose that $n = 2m \ge 10$ with $m$ odd.  Let $g$ be a product
of disjoint cycles of lengths $m-2$ and $m+2$.  Argue precisely as above.

If $n=6$,  let $C$ be the set of all $5$-cycles and let $D$ be the class
of elements of order $4$ in $G$.  The only maximal subgroups containing
an element $g$  of order $5$ are $\fA_5$ (two classes), none of which
contain an element of order $4$  (thus, $C$ and $D$ are $\Aut(G)$-invariant).

Now suppose that $n \ge 5$ is odd. Let $g$ be an $n$-cycle. If $n$ is prime, 
let $h$ be a  $3$-cycle.  Then  $G=\langle g, h \rangle$ (since any primitive
group containing a $3$-cycle contains $G$).  If $n$ is not prime, let $q$ be
a prime with $n/2 < q < n-3$, which exists by Bertrand's postulate. Let $h$
be the product of a $3$-cycle and a $q$-cycle.  Then $\langle g,h \rangle$
is clearly primitive and contains a $3$-cycle, whence the claim follows.
\end{proof}

\begin{prop}   \label{prop:sporgen}
 Let $G$ be a sporadic simple group. Then there exist $\Aut(G)$-classes $C,D$
 of $G$ such that $G=\langle x, y \rangle$ for any $(x,y)\in C \times D$.
\end{prop}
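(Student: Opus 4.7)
The strategy directly extends that of Proposition~\ref{prop:spor}. For each sporadic simple group $G$, it suffices to produce two $\Aut(G)$-orbits $C,D$ of elements of $G$ such that no maximal subgroup of $G$ meets both $C$ and $D$: any pair $(x,y)\in C\times D$ then generates $G$, since $\langle x,y\rangle$ lies in no maximal subgroup.

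My plan is to reuse the class pairs $(C_1,C_2)$ displayed in Table~\ref{tab:spor}. By the proof of Proposition~\ref{prop:spor}, these already satisfy the ``no common maximal overgroup'' condition (verified case by case from the ATLAS lists of maximal subgroups, together with direct computation inside $J_2$ and $\tw2F_4(2)'$). The only remaining point is to check $\Aut(G)$-invariance of each chosen class. For the thirteen sporadic simple groups with trivial outer automorphism group (namely $M_{11}, M_{23}, M_{24}, J_1, J_4, Co_1, Co_2, Co_3, Ru, Th, Ly, B, M$) there is nothing to check. For the remaining groups, where $|\Aut(G):G|=2$, the ATLAS character table records whether any given $G$-class is fixed or swapped by the non-trivial outer automorphism. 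In most of our cases the selected class is the unique $G$-class of its element order (this is manifest for the entries $11a, 21a, 23a, 31a, 37a, 47a, 59a$, and most of the others in Table~\ref{tab:spor}), hence it is automatically stable under all of $\Aut(G)$.

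In the residual cases, where a chosen class is exchanged with another class of the same element order by the outer automorphism, I would take $C$ (or $D$) to be the union of the two fused $G$-classes. This union is a single $\Aut(G)$-orbit, and the ``no common maximal overgroup'' property is preserved: $\Aut(G)$ permutes the set of maximal subgroups of $G$, and the fused $G$-classes meet exactly the $\Aut(G)$-orbit of maximal subgroups met by the original class, so no maximal subgroup can contain elements of both enlarged sets. Alternatively, for such $G$ one can simply pick a different, provably $\Aut(G)$-invariant class with a similar intersection pattern directly from the ATLAS.

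The only real obstacle is the bookkeeping for the two largest sporadic groups, $B$ and $M$, for which the complete classification of maximal subgroups is not fully in the literature. Here we are rescued by the choice of element orders in Table~\ref{tab:spor} (namely $23$ and $31$ for $B$, and $47$ and $59$ for $M$): these orders are large enough that all maximal overgroups of such elements are known explicitly (typically small Lie-type sections like $\PSL_2(47)$, $\PSL_2(59)$, or normalizers involving the Baby Monster in $M$), so the disjoint-overgroup check reduces to a short finite verification. This completes the plan.
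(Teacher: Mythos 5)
Your plan follows the paper's route (reuse the pairs of Table~\ref{tab:spor} and check $\Aut(G)$-stability), but it rests on a misreading of the proof of Proposition~\ref{prop:spor} at exactly the delicate point. For $J_2$ and $\tw2F_4(2)'$ that proof does \emph{not} verify the ``no maximal subgroup meets both classes'' condition for the listed pairs; the explicit computations there (in the $6$-dimensional representation over $\FF_4$, resp.\ on $1600$ points) only establish the existence of \emph{some} generating triple, which is all that the Beauville statement requires. The stronger property that \emph{every} pair in $C\times D$ generates actually fails for the Table's choices: in $J_2$ the maximal subgroup $3.\fA_6.2$ contains elements of order $8$ and of order $15$ (the preimage of an order-$5$ element in the triple cover has order $15$, and since the two classes of order-$15$ elements are algebraically conjugate this subgroup meets $15a$), so it meets both $8a$ and $15a$; a similar problem arises for the pair $(10a,16a)$ in the Tits group. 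This is precisely why the paper's own proof treats these two groups separately, replacing the classes by $5c\times 7a$ for $J_2$ and $13a\times 16a$ for $\tw2F_4(2)'$, for which the disjointness can be read off from \cite{Atl}. As written, your argument leaves $J_2$ (and the Tits group) unproved, and this cannot be repaired without changing the classes there.

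A smaller point: your fallback of enlarging a fused class to the union of its $\Aut(G)$-translates does not automatically preserve the disjointness condition. If a maximal subgroup $M$ meets only $C_1$ while $M$ also meets $C_2^{\tau^{-1}}$ for an outer automorphism $\tau$, then $M^{\tau}$ meets both enlarged sets, even though no single maximal subgroup met $C_1$ and $C_2$. The argument is valid when the partner class is itself $\Aut(G)$-stable (e.g.\ the unique class of its element order), which is the situation in the cases you would need, but the justification should invoke that hypothesis explicitly rather than the orbit-counting phrase you give. Your remarks on $B$ and $M$, where one uses the known overgroups of elements of the large prime orders $23,31$ and $47,59$, are fine and consistent with what the paper uses.
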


\begin{proof}
This follows by the proof of Proposition \ref{prop:spor}, except for $J_2$
and the Tits group. For the first group, all pairs $(x,y)\in 5c\times 7a$
generate, for $\tw2F_4(2)'$ the same is true for all pairs
of elements from $13a\times 16a$, by \cite{Atl}.
\end{proof}

It remains to consider the finite simple groups of Lie type.

\begin{prop}   \label{prop:genL}
 Theorem~\ref{thm:strong gen classes} holds for the simple linear groups
 $\PSL_n(q)$.
\end{prop}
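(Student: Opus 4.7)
The plan is to apply Lemma~\ref{lem:unique}: I would exhibit an element $g\in S:=\PSL_n(q)$ lying in a unique maximal subgroup $M$ of the almost simple group $G$, set $C:=g^G\subseteq S$, and let $D\subseteq S$ consist of elements not $G$-conjugate into $M$. Nonemptiness of $D$ follows from Jordan's theorem, combined with a counting argument to guarantee $D\cap S\ne\emptyset$ (so that $D$ can be taken in $S$ as the statement requires).

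For $n\ge 3$, my choice for $g$ is (the image of) a Singer cycle generator of $\SL_n(q)$, of order $m=(q^n-1)/((q-1)\gcd(n,q-1))$. When $n$ is prime, a classical result of Kantor shows that the Singer normalizer $N_S(\langle g\rangle)$, of order $mn$, is the unique maximal overgroup of $g$ in $S$. When $n$ is composite, the GPPS theorem \cite{GPPS} (see also \cite[Thm.~2.2]{GM10}) restricts maximal overgroups of $g$ to Singer normalizers, extension-field subgroups of type $\GL_{n/r}(q^r)\cap S$ for primes $r\mid n$, and subfield subgroups. Requiring $m$ to be divisible by a Zsigmondy prime for $q^n-1$ that does not divide $q^j-1$ for $j<n$ eliminates the subfield subgroups, while extension-field overgroups can be excluded by a further Zsigmondy condition in the spirit of Corollary~\ref{cor:twozsig2} (or by the fact that a generator of a full $(q^n-1)/(q-1)$-torus acts irreducibly only over $\FF_q$). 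Uniqueness of $M$ transfers from $S$ to $G$ since $\Aut(S)$ stabilizes the $S$-conjugacy class of the Singer normalizer.

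For $n=2$, I would use Dickson's classification of subgroups of $\PSL_2(q)$: an element $g$ of order $(q+\eps)/\gcd(2,q-1)$ for an appropriate sign $\eps\in\{\pm1\}$ is contained only in a dihedral maximal subgroup of order $2(q+\eps)/\gcd(2,q-1)$, except for finitely many small $q$ in which additional maximal subgroups ($\fA_4$, $\fS_4$, $\fA_5$, or $\PSL_2(p^a)$) may intersect the cyclic group $\langle g\rangle$. For these exceptional $q$, one can either use the opposite-torus element or verify by direct inspection.

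The main obstacle is twofold. First, in the composite-$n$ case, one must choose $g$ (and perhaps replace the Singer cycle by a generator of a slightly smaller torus, in the style of Table~\ref{tab:unitor}) so as to rule out every extension-field overgroup from the GPPS list; this requires a careful Zsigmondy-prime analysis and may leave a finite list of small $(n,q)$ as genuine exceptions. Second, these finitely many exceptional pairs, together with the small $q$ exceptions for $\PSL_2(q)$, must then be handled by direct inspection of \cite{Atl} or by explicit computation, exactly in the spirit of the small-case treatment in Proposition~\ref{prop:dim3} and Proposition~\ref{prop:U456}.
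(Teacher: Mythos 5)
There is a genuine gap in the composite-$n$ case, and it is structural rather than a matter of finitely many small exceptions. The norm-one Singer torus of $\SL_n(q)$, the cyclic group of order $(q^n-1)/(q-1)$ acting as multiplication by norm-one elements of $\FF_{q^n}^*$, is $\FF_{q^r}$-linear for every $r\mid n$, so your element $g$ lies inside the extension-field subgroup $N(\GL_{n/r}(q^r)\cap\SL_n(q))$ for \emph{every} prime $r\mid n$; moreover the Singer normalizer itself sits inside these subgroups, so it is not even maximal when $n$ is composite. These Aschbacher class $\mathcal{C}_3$ subgroups are irreducible and have order divisible by $q^n-1$, so neither of your proposed remedies can work: no Zsigmondy prime of $q^n-1$ can fail to divide their order, and irreducibility of $g$ on $\FF_q^n$ does not contradict $g$ preserving an $\FF_{q^r}$-structure. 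Consequently, whenever $n$ has at least two distinct prime divisors (e.g.\ $n=6,10,12,\dots$) the element $g$ has at least two classes of maximal overgroups, so Lemma~\ref{lem:unique} simply does not apply, and no choice of a ``slightly smaller torus'' helps, since any torus containing elements of order divisible by a Zsigmondy prime of $q^n-1$ is the Singer torus itself. (A smaller point: a Zsigmondy prime of $q^n-1$ need not rule out subfield subgroups either, as $\PSL_3(4)\ge\PSL_3(2)$ with the order-$7$ Singer element shows, though that at least only produces sporadic exceptions.)

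To repair this one is essentially forced into a two-class argument, which is what the paper does: it takes $C$ to consist of elements of order $(q^n-1)/((q-1)d)$ and $D$ of elements of order $(q^{n-1}-1)/d$ and shows, via Corollary~\ref{cor:twozsig2} and \cite[Lemma~2.3]{GM10}, that no maximal subgroup meets both classes (here $\gcd(n,n-1)=1$ is what kills the extension-field subgroups, and two different Zsigmondy primes kill subfield and classical subgroups), so that \emph{every} pair generates; the Zsigmondy failures $(n,q)=(6,2),(7,2)$ and the small cases $(3,2),(3,4),(4,2),(4,3)$ are then handled individually. Your $\PSL_2(q)$ treatment via Dickson and your use of Jordan's theorem for derangements are fine as far as they go, but the core mechanism you rely on for $n\ge3$ fails for composite $n$, so the proposal as written does not prove the proposition.
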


\begin{proof}
First consider $\PSL_2(q)$. The alternating groups
$\PSL_2(4)\cong\PSL_2(5)\cong\fA_5$ and $\PSL_2(9)\cong\fA_6$ were treated in
Proposition~\ref{prop:altgen}. For $q\ge11$
let $C_1$ contain elements of order~$(q+1)/d$ and $C_2$ elements of order
$(q-1)/d$, where $d=\gcd(2,q-1)$. Then by the well-known classification of
subgroups of $\PSL_2(q)$, any pair $(x,y)\in C_1\times C_2$ will generate.
For $\PSL_2(7)$ we let $C_2$ be a class of 7-elements instead.
\par
For $n\ge3$ let $C_1$ contain elements of order $(q^n-1)/(q-1)/d$, and $C_2$
elements of order $(q^{n-1}-1)/d$, where $d=\gcd(n,q-1)$. If $n\ge5$ then
any pair from $C_1\times C_2$ generates by our Corollary~\ref{cor:twozsig2},
unless $(n,q)\in\{(6,2),(7,2)\}$ when one of the two Zsigmondy primes does
not exist. In the first of these cases, the only proper overgroup of elements
from $C_2$ is an end-node parabolic, while in the second the only proper
overgroup of elements from $C_1$ is the normalizer of a Singer cycle, by
\cite[Lemmas~2.3 and~2.4]{GM10}, but neither contains elements of order
$2^6-1$. \par
Similarly, for $3\le n\le4$ it follows from \cite[Lemma~2.3]{GM10} that any
pair generates unless possibly $(n,q)\in\{(3,2),(3,4),(4,2),(4,3)\}$. The
groups $\PSL_3(2)\cong\PSL_2(7)$ and $\PSL_4(2)\cong\fA_8$ were already
considered before. According to \cite{Atl} the group $\PSL_3(4)$ is generated
by any pair of elements of orders~5 and~7; the group $\PSL_4(3)$ is generated
by any pair of elements of orders~5 and~13.
\end{proof}

\begin{prop}   \label{prop:genU}
 Theorem~\ref{thm:strong gen classes} holds for the simple unitary groups
 $\PSU_n(q)$, $n\ge3$.
\end{prop}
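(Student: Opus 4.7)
The plan is to parallel the proof of Proposition~\ref{prop:genL} for the linear groups: embed $G = \SU_n(q)$ in $\GL_n(q^2)$ and pick two $\Aut(G)$-stable conjugacy classes $C_1, C_2 \subset G$ of regular semisimple elements whose orders carry Zsigmondy prime divisors of $(q^2)^{e_i} - 1$ for distinct $e_1, e_2 > n/2$. Then for any pair $(x,y) \in C_1 \times C_2$, the subgroup $\langle x, y \rangle \le \GL_n(q^2)$ is irreducible and its possible overgroups are pinned down by Corollary~\ref{cor:twozsig2}, so it will remain to eliminate each alternative listed there.

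First I would dispose of $n = 3$ via the classical description of maximal subgroups of $\PSU_3(q)$ (Mitchell, Hartley, Kleidman): take $C_1$ to be the class of generators of a cyclic subgroup of order $(q^2-q+1)/\gcd(3,q+1)$, whose only generic maximal overgroup is its own normalizer, and take $C_2$ to be a class whose element order excludes that normalizer as well as the finitely many sporadic maximal subgroups arising for small $q$; the remaining fields $q \in \{3,4,5\}$ are settled directly from \cite{Atl}. For $n \ge 7$ I would choose classes by torus type in $\SU_n(q)$. For $n$ odd, take $C_1$ to contain Singer elements of order $(q^n+1)/((q+1)d)$ so that $e_1 = n$, and $C_2$ to contain elements of torus type $(n-2,2)$ with order divisible by the Zsigmondy prime of $q^{n-2}+1$, so that $e_2 = n-2$. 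For $n$ even, take $C_1$ of type $(n-1,1)$ with order $q^{n-1}+1$ and $C_2$ of type $(n-3,3)$ with order divisible by the Zsigmondy prime of $q^{n-3}+1$, giving $e_1 = n-1$ and $e_2 = n-3$. In both parities, $n \ge 7$ gives $e_1 > e_2 > n/2$ with $\gcd(e_1,e_2) \le 2$, and so Corollary~\ref{cor:twozsig2} applies.

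The alternatives of that corollary would then be ruled out as follows: extension-field subgroups are excluded because $\gcd(n, e_1, e_2) \le 2$ and no element of the prescribed order lies in a proper field extension of that degree; subfield subgroups are excluded via the star-versions of the cyclotomic polynomials as in \cite[Lemma~2.1]{GM10}; the only classical overgroup in $\GL_n(q^2)$ that accommodates both chosen partition types as a Hermitian-form stabilizer is $\SU_n(q)$ itself; and the small exceptional items (4)--(6) contribute only a bounded list of $(n,q)$ to be handled individually. The residual cases $n \in \{4,5,6\}$ and small-$q$ exceptions will be handled along the lines of Proposition~\ref{prop:U456}, using the Aschbacher/Kleidman--Liebeck classification of maximal subgroups of $\SU_n(q)$ for low $n$ together with direct computation in $\SU_4(2), \SU_4(3), \SU_5(2), \SU_6(2)$. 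The main obstacle I expect is $n = 6$, where the natural second choice gives $e_2 = n-3 = 3 = n/2$, so Corollary~\ref{cor:twozsig2} does not apply directly; there I would replace $C_2$ by a class drawn from a twisted torus of $\SU_6(q)$, for example the $\Phi_1\Phi_3\Phi_6$- or $\Phi_2\Phi_6^2$-torus of Table~\ref{tab:unitor}, and conclude by directly inspecting the very restricted list of maximal subgroups of $\PSU_6(q)$ meeting both chosen classes. Finally one passes to the simple quotient $\PSU_n(q) = \SU_n(q)/Z$, noting that the classes chosen are preserved by $\Aut(\PSU_n(q))$ so that the conclusion of Theorem~\ref{thm:strong gen classes} holds for every almost simple $G$ with socle $\PSU_n(q)$.
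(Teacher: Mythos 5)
For $n\ge 7$ your two-Zsigmondy-class argument is essentially the paper's own: Proposition~\ref{prop:genU} disposes of $n\ge8$ by citing Proposition~\ref{prop:U}, which runs exactly the computation you describe (two regular semisimple classes of incompatible invariant-subspace types, Zsigmondy exponents $e_1>e_2>n/2$ relative to $\FF_{q^2}$, Corollary~\ref{cor:twozsig2} to eliminate overgroups), and your choices of types $n$ versus $(n-2,2)$ for $n$ odd and $(n-1,1)$ versus $(n-3,3)$ for $n$ even are sound in that range.

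The divergence, and the genuine gap, is $3\le n\le 6$ (the paper in fact treats all of $n\le 7$ this way). The paper does not attempt to exhibit a second class missing every maximal subgroup; instead it picks a single class $C_1$ of elements of order $(q^n+1)/((q+1)d)$ for $n$ odd, resp.\ $(q^{n-1}+1)/d$ for $n$ even, which by \cite[Lemmas~2.5 and 2.6]{GM10} lie in a \emph{unique} maximal subgroup $M$ (the torus normalizer, resp.\ $N(\SU_{n-1}(q))$), and then invokes Lemma~\ref{lem:unique}, taking $D$ to be the derangements on $G/M$. That is the idea your outline is missing, and it is what makes the small ranks painless, leaving only a handful of pairs $(n,q)$ to check by hand. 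Your substitute --- ``handled along the lines of Proposition~\ref{prop:U456}'' --- does not go through as stated: that proposition produces \emph{one} generating triple with product $1$, and its elimination of the overgroups of $x_1$ uses the centralizer orders of $x_2$ \emph{and} $x_3$, whereas here you must show that \emph{every} pair from $C_1\times C_2$ generates using only two classes. Moreover at $n=4$ Corollary~\ref{cor:twozsig2} is unavailable (it requires $n>4$), and at $n=6$ you yourself note the exponent collision with $n/2$; so for $n\in\{4,5,6\}$ you are left with a nontrivial maximal-subgroup analysis that the proposal does not actually carry out. These cases are certainly finishable --- either by the unique-overgroup-plus-derangement device of Lemma~\ref{lem:unique} or by a direct Aschbacher-style analysis with carefully re-chosen second classes --- but as written the small-$n$ half of the proof is not there.
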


\begin{proof}
For $n\ge8$ this was already shown in Proposition~\ref{prop:U}, using the two
classes in Table~\ref{tab:classtorus}. For $n\le7$ odd let $C_1$ contain
elements of order $(q^n+1)/(q+1)/d$, where $d=\gcd(n,q+1)$. By
\cite[Lemma 2.5]{GM10} the only maximal overgroup of such an element is the
normalizer of the maximal torus of that order, in which case we're done by
Lemma~\ref{lem:unique}, or $(n,q)\in\{(5,2),(3,3),(3,5)\}$. In the latter three
groups, no maximal subgroup contains elements of orders~11 and~9, 7 and~12,
respectively~7 and~8.
\par
For $n\le6$ even let $C_1$ contain elements of order $(q^{n-1}+1)/d$. By
\cite[Lemma 2.6]{GM10} the only maximal overgroup of such an element is the
normalizer of $\SU_{n-1}(q)$, or $(n,q)\in\{(4,2),(6,2),(4,3),(4,5)\}$. In
these last four groups, no maximal subgroup contains elements of orders~5 and~9,
11 and~30, 7 and~9, respectively~7 and~13.
\end{proof}

\begin{prop}   \label{prop:genSp}
 Theorem~\ref{thm:strong gen classes} holds for the symplectic groups
 $\PSp_{2n}(q)$, $n\ge2$, $q$ odd for $n\ge3$, $(n,q)\ne(2,2),(2,3)$.
\end{prop}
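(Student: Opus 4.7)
The plan follows the template of Propositions~\ref{prop:genL} and~\ref{prop:genU}: I will exhibit two $\Aut(G)$-invariant conjugacy classes $C,D$ of the simple group $G = \PSp_{2n}(q)$ such that no proper maximal subgroup of $G$ meets both, whence every pair $(c,d) \in C \times D$ generates $G$.

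For $n = 2$ and $q \ge 4$ I would rely on the classical description of the maximal subgroups of $\PSp_4(q)$, see e.g.\ \cite{KaLi82}. Take $C$ to consist of elements of order $(q^2+1)/\gcd(2,q-1)$ coming from a maximal torus of $\Sp_4(q)$ of order $q^2+1$. For generic $q$ such an element lies in a unique maximal subgroup of $G$, the torus normalizer of structure $(q^2+1)/\gcd(2,q-1)\cdot 4$; Lemma~\ref{lem:unique} then applies with $D$ any class of derangements on $G/N_G(T)$, for instance the class of regular unipotent elements of order $p$. The small values $q \in \{4,5,7,9\}$, where additional maximal subgroups may intervene, would be handled individually from \cite{Atl} or by direct computation in GAP.

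For $n \ge 3$ and $q$ odd I would mimic the proof of Proposition~\ref{prop:Sp}, but argue for every pair rather than merely the existence of a specific triple. Let $C$ consist of elements of order $(q^n+1)/2$ from a Singer-type maximal torus of $\Sp_{2n}(q)$ of order $q^n+1$, and $D$ of elements of type $(n-1)^- \oplus 1^-$ as in Table~\ref{tab:classtorus}. Any $x_1 \in C$ acts irreducibly on the natural module $V = \FF_q^{2n}$, so $H := \langle x_1, x_2 \rangle$ is irreducible for every $x_2 \in D$. Invoking Corollary~\ref{cor:twozsig2} with $e_1 = 2n$, $e_2 = 2(n-1)$, either $H$ contains the full $\Sp(V)$ (equivalently $H = G$), or $H$ falls into one of: (i) a field-extension subgroup, which forces $k \mid \gcd(2n,\,2n,\,2(n-1)) = 2$ and hence $H \le \Sp_n(q^2).2$; (ii) a subfield subgroup; or (iii) one of the low-rank exceptional cases listed in the corollary.

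The main obstacle will be case~(i) when $n$ is even, since a Singer cycle of $\Sp_{2n}(q)$ does embed in $\Sp_n(q^2)$. Here I would argue via the detailed invariant subspace structure of $x_2$: an element of type $(n-1)^-\oplus 1^-$ admits a distinguished $2$-dimensional $\FF_q$-rational orthogonal decomposition incompatible with the semilinear $\FF_{q^2}$-structure required for membership in $\Sp_n(q^2).2$, and its centralizer order $(q^{n-1}+1)(q+1)/\gcd(2,q-1)$ does not divide the order of that subgroup. Case~(ii) is excluded by a standard Zsigmondy prime argument applied to a maximal prime divisor of $q^n+1$ or $q^{n-1}+1$. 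The low-rank exceptions, principally $(n,q) \in \{(3,3), (3,5), (4,3)\}$ together with the small-$q$ cases for $n=2$, are to be resolved by explicit computation, substituting an alternative $\Aut(G)$-invariant pair of classes whenever the generic choice fails.
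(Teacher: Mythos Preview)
Your plan for $n\ge3$ is sound in spirit, but note that it diverges from the paper in an unnecessary way. The paper simply invokes Proposition~\ref{prop:Sp}, where it was already shown that \emph{every} pair $(x_1,x_2)\in C_1\times C_2$ generates, with $C_1$ of type $n^+$ (order dividing $q^n-1$) and $C_2$ of type $(n-1)^-\oplus1^-$; the only exception is $(n,q)=(4,3)$, handled there via a third class but here by observing that an element of order $(3^4+1)/2=41$ lies in a unique class of maximal subgroups, the normalizer of $\PSp_4(9)$. Your substitution of a Singer class $n^-$ for $C_1$ would also work (your Zsigmondy argument for excluding $\Sp_n(q^2).2$ is correct, though ``centralizer order does not divide'' should read ``element order does not divide''), but it creates extra work you do not need to do.

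The genuine gap is in your treatment of $n=2$. Your claim that a Singer element of order $(q^2+1)/d$ lies only in the torus normalizer is wrong: that normalizer is not maximal in $\PSp_4(q)$, being contained in the extension-field subgroup $\PSL_2(q^2).2$. Worse, for $q=2^{2m+1}$ the Suzuki group $\tw2B_2(q)$ is a further maximal subgroup of $\Sp_4(q)$ whose order $q^2(q^2+1)(q-1)$ is divisible by $q^2+1$, so the Singer element has at least two non-conjugate maximal overgroups and Lemma~\ref{lem:unique} cannot be applied. This is not a finite list of small-$q$ exceptions. The paper avoids this entirely: it keeps $C_1$ as the Singer class but takes $C_2$ to consist of regular semisimple elements of order $q+1$ lying in a torus of order $(q+1)^2/d$, and cites \cite[Lemma~2.8]{GM10} to conclude that no maximal subgroup of $\PSp_4(q)$ meets both classes. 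No case-by-case analysis for small $q$ is then required.
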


\begin{proof}
For $n\ge3$ this was already shown in Proposition~\ref{prop:Sp}, using the two
classes in Table~\ref{tab:classtorus}, unless $(n,q)=(4,3)$. For $\PSp_8(3)$
there is only one class of maximal subgroups containing elements of order
$(q^4+1)/2$, viz. the normalizer of an extension field subgroup $\PSp_4(9)$,
by \cite[Lemma 2.8]{GM10}. 
\par
For $n=2$ let $C_1$ consist of elements of order~$(q^2+1)/d$, $d=\gcd(2,q-1)$,
and $C_2$ of regular semisimple elements of order~$q+1$ with centralizer of
order $(q+1)^2/d$. By \cite[Lemma 2.8]{GM10} no maximal subgroup can contain
elements from both classes.
\end{proof}

\begin{prop}   \label{prop:genOodd}
 Theorem~\ref{thm:strong gen classes} holds for the orthogonal groups
 $\OO_{2n+1}(q)$, $n\ge3$.
\end{prop}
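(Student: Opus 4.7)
The plan is to follow the same template as Propositions~\ref{prop:genL}--\ref{prop:genSp}: exhibit two conjugacy classes $C_1,C_2$ (contained in the socle) such that \emph{no} maximal subgroup of $G=\OO_{2n+1}(q)$ can contain elements from both classes. Once this is established, any pair $(x_1,x_2)\in C_1\times C_2$ must generate the full socle.

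For the generic case I would choose $C_1$ and $C_2$ to be essentially the classes already used in Table~\ref{tab:classtorus} for Proposition~\ref{prop:Oodd}: $C_1$ of regular semisimple type $(n-2)^-\oplus 2^-$ and $C_2$ of type $(n-3)^-\oplus 3^+$ for $n\ge 4$, with the $n=3$ case using types $3^+$ and $2^-\oplus 1^+$. The first step is to verify that for every pair $(x_1,x_2)\in C_1\times C_2$ the subgroup $H=\langle x_1,x_2\rangle$ is irreducible on the natural module. As in the proof of Proposition~\ref{prop:Oodd}, the only obstruction would be that $H$ fixes an anisotropic line, forcing $H\le\Omega_{2n}^\eps(q)$ for some sign $\eps$; but $x_1$ does not lie in a $-$-type orthogonal group and $x_2$ does not lie in a $+$-type orthogonal group (this is forced by the invariant subspace structure reading off the types).

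The second step is to classify the irreducible overgroups. Since each $o(x_i)$ is divisible by a Zsigmondy prime divisor of $q^{e_i}-1$ with $e_i>n$, and for $n\ge 5$ by two such primes of different orders, Corollary~\ref{cor:twozsig2} applies: $H$ is either the full classical group $G$, an extension field subgroup, or a subfield subgroup. Extension and subfield overgroups are ruled out by comparing the two distinct Zsigmondy primes that divide $o(x_1)o(x_2)$ but cannot both be accommodated in a proper such subgroup. Hence $H=G$ for every pair, which (after passing to $G/Z(G)$) gives the required statement with both $C$ and $D$ in the socle.

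The main obstacle will be, as usual, the handful of small cases where Zsigmondy primes are missing or where Corollary~\ref{cor:twozsig2} admits exceptional configurations, namely $\OO_7(q)$ with $q\le 4$ and $\OO_9(q)$ with $q\le 3$. For these I would argue directly from the \textsc{Atlas} (and in some cases \textsf{GAP}) list of maximal subgroups: for each group, exhibit two explicit $\Aut(G)$-invariant classes whose element orders have no common maximal overgroup. Concretely, classes of elements of order $7,13,17$ in $\OO_7(2),\OO_7(3),\OO_7(4)$ respectively, and of order $7,13$ in $\OO_9(2),\OO_9(3)$, are already shown in Proposition~\ref{prop:Oodd} to have the property that no maximal subgroup meets both; re-reading that case analysis establishes the pairwise generation needed here. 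This completes the proof.
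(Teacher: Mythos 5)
There is a genuine gap, and it is concentrated exactly in the ranks where the paper has to do extra work. Your key claim that for the classes of Table~\ref{tab:classtorus} ``each $o(x_i)$ is divisible by a Zsigmondy prime divisor of $q^{e_i}-1$ with $e_i>n$, and for $n\ge5$ by two such primes'' is arithmetically false: for $x_2$ of type $(n-3)^-\opl 3^+$ the largest relevant exponent is $2(n-3)$, which exceeds half the dimension $2n+1$ only when $n\ge7$, and for $n=4$ even $x_1$ (type $2^-\opl 2^-$, exponent $4$) fails the threshold of Corollary~\ref{cor:twozsig2} (which in any case requires $e_1>e_2>(2n+1)/2$, not $e_i>n$). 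This is precisely why the proof of Proposition~\ref{prop:Oodd} for $4\le n\le 6$ had to bring in the auxiliary class $C_3$ of elements of order $q^{n-1}+1$: what was proved there in those ranks is generation by suitable \emph{triples}, not that every pair from $C_1\times C_2$ generates, so it cannot simply be quoted for Theorem~\ref{thm:strong gen classes}. The paper's proof instead changes the classes for $3\le n\le 6$: it takes elements of orders $(q^n+1)/d$ and $(q^{n-1}+1)(q+1)/d$ for $4\le n\le6$, and of orders $(q^3+1)/d$ and $(q^3-1)/d$ for $n=3$, and rules out common maximal overgroups via \cite[Lemmas~2.7 and 2.8]{GM10}. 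Only for $n\ge 7$ does your route coincide with the paper's (where Proposition~\ref{prop:Oodd} indeed gives the pairwise statement).

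The treatment of the small cases is also not salvageable as written. The computations in Proposition~\ref{prop:Oodd} for $\OO_7(q)$, $q\le4$, and $\OO_9(q)$, $q\le3$, exhibit a single generating triple of elements of order $7$, $13$ or $17$ (via a structure-constant count beating the contributions of maximal subgroups); they do not show that no maximal subgroup meets both of two classes, and indeed that stronger statement is false for, say, a single class of elements of order $7$ in $\OO_7(2)=\PSp_6(2)$, which meets several maximal subgroups such as $\fS_8$. The paper's proof therefore uses fresh pairs of element orders in the exceptional cases -- $(9,15)$ in $\OO_7(2)$, $(13,14)$ in $\OO_7(3)$, the absence of elements of order $(q^3+1)/d$ in the extra maximal subgroups of $\OO_7(4)$ and $\OO_7(5)$ -- and for $\OO_9(2)$, where the Zsigmondy prime for $q^{n-1}+1$ is missing, a finer argument: among the maximal subgroups of order divisible by $17$, only $\OO_8^-(2).2$ contains elements of order $20$, and it meets just one of the two classes of such elements, so the other class can be paired with the class of order $17$. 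You would need to supply arguments of this kind for all of $4\le n\le 6$ and $n=3$ before the proposal becomes a proof.
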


\begin{proof}
For $n\ge7$ this was already shown in Proposition~\ref{prop:Oodd}, using the
two classes in Table~\ref{tab:classtorus}. Now assume that $4\le n\le6$. We
take $C_1$ to consist of regular semisimple elements of order $(q^n+1)/d$,
$d=\gcd(2,q-1)$, and $C_2$ containing elements of order $(q^{n-1}+1)(q+1)/d$.
By \cite[Lemma 2.7 and~2.8]{GM10} no maximal subgroup contains both types of
elements, unless possibly when $n=4$, $q=2$, in which case there is no
Zsigmondy prime for $(q^{n-1}+1)/d$. The only maximal subgroups of
$\OO_9(2)=\PSp_8(2)$ of order divisible by~17 are $\OO_8^-(2).2$, $\PSp_4(4).2$
and $\PSL_2(17)$. The latter two do not contain elements of order~20, and
the first only contains one class, while $\OO_9(2)$ contains two such classes.
\par
For $n=3$ let $C_1$ be a class of elements of order $(q^3+1)/d$,
$d=\gcd(2,q-1)$, and $C_2$ a class of elements of order $(q^3-1)/d$. According
to \cite[Lemma~2.7 and~2.8]{GM10} no maximal subgroup contains both types
of elements, unless possibly $q\in\{2,3,4,5\}$. For $\PSp_6(4)=\OO_7(4)$
and $\OO_7(5)$ none of the additional maximal subgroups has elements of
order $(q^3+1)/d$. No maximal subgroup of $\PSp_6(2)=\OO_7(2)$ contains
elements of orders~9 and~15, and no maximal subgroup of $\OO_7(3)$ contains
elements of orders~13 and~14.
\end{proof}

\begin{prop}   \label{prop:genO-}
 Theorem~\ref{thm:strong gen classes} holds for the orthogonal groups
 $\OO_{2n}^-(q)$, $n\ge4$.
\end{prop}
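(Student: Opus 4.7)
The proof will follow the template of the earlier propositions in this section: I will select two conjugacy classes $C_1,C_2$ of regular semisimple elements in $G=\Omega_{2n}^-(q)$ such that no proper maximal subgroup contains elements from both, and then conclude that $S\le\langle x_1,x_2\rangle$ for every pair $(x_1,x_2)\in C_1\times C_2$.

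For the generic range $n\ge 7$, I would take $C_1$ to consist of elements of type $(n-1)^-$ (of order divisible by $q^{n-1}+1$) and $C_2$ of elements of type $(n-3)^-\oplus 3^+$ (of order divisible by $(q^{n-3}+1)(q^3+1)$), following Table~\ref{tab:classtorus}. An element $x_1\in C_1$ fixes a unique orthogonal decomposition of the natural module into an anisotropic $2$-space and a $2(n-1)$-space of minus type on which it acts irreducibly; an $x_2\in C_2$ fixes a unique decomposition into summands of dimensions $2(n-3)$ and $6$. These decompositions are incompatible, so $H:=\langle x_1,x_2\rangle$ acts irreducibly on the $2n$-dimensional natural module. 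The orders $o(x_1)$ and $o(x_2)$ carry Zsigmondy prime divisors corresponding to exponents $e_1=2(n-1)$ and $e_2=2(n-3)$, both exceeding $n$ when $n\ge 7$; hence Corollary~\ref{cor:twozsig2} forces $H$ to be contained in either $G$ itself or in a classical, extension-field, or subfield subgroup. The two distinct Zsigmondy primes eliminate the extension-field and subfield cases, and no proper classical subgroup compatible with the minus-type orthogonal form can accommodate both element types.

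For $4\le n\le 6$ the exponent $e_2=2(n-3)$ is no longer strictly larger than $n$, and Zsigmondy primes can fail altogether when $q=2$, so I would treat these cases individually. For each such $n$ and generic $q$, I would pick $x_1$ to have order $(q^{n-1}+1)/d$ or $(q^n+1)/d$ (where $d=\gcd(4,q^n+1)$), invoke \cite[Lemmas~2.6--2.8]{GM10} to enumerate the very short list of maximal overgroups of $x_1$, and then choose $C_2$ so that its centralizer order in $G$ excludes inclusion in any of them. The case $n=4$ requires instead Kleidman's classification of maximal subgroups of $\Omega_8^-(q)$ \cite{Kl87}. The residual small exceptions $(n,q)\in\{(4,2),(4,3),(5,2),(6,2)\}$ I would verify by direct computation using the character tables or the explicit representations in \cite{Atl}, just as in Propositions~\ref{prop:genSp} and~\ref{prop:genOodd}.

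The main obstacle lies in the case $n=4$: the rank is too small for the two-Zsigmondy-prime machinery of Corollary~\ref{cor:twozsig2}, so one must fall back on Kleidman's explicit maximal subgroup list and perform a careful bookkeeping to ensure that the chosen $C_2$ avoids every overgroup of $C_1$. A secondary difficulty is the handful of $q=2$ exceptions in dimensions $8,10,12$, where the absence of certain Zsigmondy primes forces ad hoc replacement of $C_2$ or direct machine verification.
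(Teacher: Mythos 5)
Your proposal follows essentially the same route as the paper: the same two classes from Table~\ref{tab:classtorus}, irreducibility via the incompatible invariant decompositions, Corollary~\ref{cor:twozsig2} with the two Zsigmondy exponents $2(n-1)>2(n-3)>n$ for $n\ge7$, and separate treatment of small ranks and $q=2$. The paper is more economical at the middle ranks: it simply quotes Proposition~\ref{prop:O-}, where pair generation for exactly these classes was already established for \emph{all} $n\ge4$ except $(n,q)\in\{(4,2),(5,2),(6,2),(4,4)\}$ --- for $n=5,6$ and for $n=4$ with general $q$ a single Zsigmondy prime for $q^{2(n-1)}-1$ together with \cite[Lemma~2.1 and Thm.~2.2]{GM10} suffices, so no separate enumeration of overgroups for $4\le n\le6$ (and no appeal to Kleidman; note \cite{Kl87} concerns $P\Omega_8^+(q)$, not minus type) is needed. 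The remaining exceptional groups are then handled by exhibiting two element orders not both dividing the order of any maximal subgroup (e.g.\ $7,17$ for $\OO_8^-(2)$, and $257,13$ for $\OO_8^-(4)$). Two bookkeeping slips in your write-up: your list of residual cases omits $\OO_8^-(4)$, which is a genuine exception since the relevant Zsigmondy prime for $4^6-1$ is only $13$ and is too small for the generic subgroup-theoretic results, whereas $(4,3)$ needs no special treatment; and the invariant $2$-space of an element of type $(n-1)^-$ in $\Omega_{2n}^-(q)$ is a hyperbolic plane rather than anisotropic (an anisotropic complement would force the ambient form to have plus type). Neither slip is structural --- provided the $n=4$ case is settled for all $q$ via an explicit maximal subgroup classification as you indicate, your strategy coincides with the paper's.
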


\begin{proof}
This was already shown in Proposition~\ref{prop:O-}, using the two classes in
Table~\ref{tab:classtorus}, except when $(n,q)\in\{(4,2),(5,2),(6,2),(4,4)\}$.
No maximal subgroup of $\OO_8^-(2)$ has order divisible by both~7 and~17;
no maximal subgroup of $\OO_{10}^-(2)$ has order divisible by both~11 and~17;
no maximal subgroup of $\OO_{12}^-(2)$ has elements of order both~11 and~65
by \cite[Lemma~2.10]{GM10}; no maximal subgroup of $\OO_8^-(4)$ has order
divisible by both~257 and~13.
\end{proof}

\begin{prop}   \label{prop:genO+}
 Theorem~\ref{thm:gen classes} holds for the orthogonal groups $\OO_{2n}^+(q)$,
 $n\ge4$.
\end{prop}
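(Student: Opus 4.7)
The plan is to follow the structure of the proof of Proposition~\ref{prop:O+}, but now constructing \emph{pairs} of conjugacy classes $(C,D)$ of $G=\Omega_{2n}^+(q)$ such that any two elements, one from each class, generate $G$, rather than triples whose product is~$1$. Once such classes are exhibited, Theorem~\ref{thm:gen classes} follows immediately.

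For $n\ge7$ the plan is simply to take $C$ and $D$ as the classes $C_1,C_2$ constructed at the start of the proof of Proposition~\ref{prop:O+}: regular semisimple elements with unique invariant subspace decompositions of dimensions $(4,2n-4)$ and $(6,2n-6)$, with element orders divisible by Zsigmondy prime divisors of $q^{2n-4}-1$ and $q^{2n-6}-1$ respectively. Corollary~\ref{cor:twozsig2} already shows that no maximal subgroup of $G$ meets both classes, so every pair in $C\times D$ generates.

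For $n=5,6$ the argument of Proposition~\ref{prop:O+} adapts directly: take the same $C_1,C_2$, replacing $C_2$ as there in the cases where a needed Zsigmondy prime fails to exist (namely $q=2$ when $n=6$, and $q\le 5$ when $n=5$). In each of those small cases we either invoke the classification of maximal subgroups via \cite{GPPS} and Corollary~\ref{cor:twozsig2}, or verify the claim by explicit computation, and two of the three classes exhibited in the generating triples already found in Proposition~\ref{prop:O+} have the required pairwise property.

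The main obstacle, as expected, is $n=4$, where the triality automorphism produces three conjugacy classes of $\Omega_7(q)$-type subgroups and complicates the maximal-subgroup analysis. For $q\ge4$ the plan is to let $C$ be a class of regular semisimple elements of order $(q^4-1)/\gcd(2,q-1)$ inside a $\GL_4(q)$-type reducible stabilizer, and $D$ a class of elements whose order is divisible by a Zsigmondy prime divisor of $\Phi_8(q)$, forcing irreducible action on the natural module. By Kleidman's classification \cite{Kl87} of the maximal subgroups of $\Omega_8^+(q)$, the overgroups of an element of $C$ are the $\GL_4$-parabolics and their two triality twists, while those of an element of $D$ are extension-field type subgroups of very different shape; no maximal subgroup meets both classes. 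For $q\in\{2,3\}$ the claim is verified by direct computation in GAP, producing a pair of classes inside $G$ itself that generates. This is precisely where the weaker form of Theorem~\ref{thm:gen classes} is needed rather than Theorem~\ref{thm:strong gen classes}: we do not insist that the classes be $\Aut(G)$-invariant, which is what is impossible for $\OO_8^+(q)$ with $q\le 3$.
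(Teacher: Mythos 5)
Your overall strategy for $n\ge 7$ matches the paper and is fine: the classes $C_1,C_2$ of Proposition~\ref{prop:O+} already have the property that no maximal subgroup meets both, so every pair generates. But the case $n=4$, $q\ge4$ contains a fatal error. You take $D$ to be a class of elements whose order is divisible by a Zsigmondy prime divisor of $\Phi_8(q)=q^4+1$. No such elements exist in $\Omega_8^+(q)$: the order of $\Omega_8^+(q)$ is (up to the centre and the $q$-part) $(q^2-1)(q^4-1)^2(q^6-1)$, and $q^4+1$ is coprime to each of these factors except for a possible factor of $2$. Equivalently, a maximal torus of order divisible by $q^4+1$ corresponds to a single negative $4$-cycle in the Weyl group, which has \emph{minus} type; elements of order $q^4+1$ live in $\Omega_8^-(q)$, not $\Omega_8^+(q)$. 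So the pair of classes you propose cannot be constructed, and the whole $n=4$, $q\ge4$ argument collapses. (The paper instead takes regular semisimple elements with invariant subspaces of types $3^-\oplus1^-$ and $2^-\oplus2^-$, forces irreducibility of $\langle x_1,x_2\rangle$, and then uses \cite[Lemma~2.9]{GM10} to reduce to the normalizers of $\SU_4(q)$, $\PSU_3(q)$ and $\Spin_7(q)$, which are eliminated by comparing centralizer orders.)

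Two further weaknesses. First, for $n=5,6$ you assert that ``two of the three classes exhibited in the generating triples already found in Proposition~\ref{prop:O+} have the required pairwise property.'' The existence of a single generating triple $(x_1,x_2,x_3)$ with product $1$ says nothing about whether \emph{every} pair from $C_1\times C_2$ generates; for instance, for $n=5$ the class of elements of order $(q^5-1)/(2,q-1)$ meets the stabilizer of a totally singular $5$-space, so one must actually check that no maximal subgroup meets both chosen classes. This is why the paper switches to classes of types $5^-\oplus1^-$, $4^-\oplus2^-$ (for $n=6$) and $4^-\oplus1^-$, $3^-\oplus2^-$ (for $n=5$), which force irreducibility of any pair. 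Second, for $q\in\{2,3\}$ an appeal to ``direct computation in GAP'' is thin for $\OO_8^+(3)$; the paper gives a structural argument using the fact that the relevant classes of elements of order $20$ (resp.\ $15$) are fused under triality and that the only maximal subgroups meeting more than one of them are of type $(\fA_6\times\fA_6).2^2$ (resp.\ $(\fA_5\times\fA_5).2^2$), which contain only one such class. You are right, however, that for $q\le 3$ one cannot ask the classes to be $\Aut(G)$-invariant, and that only the weaker Theorem~\ref{thm:gen classes} is claimed here.
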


\begin{proof}
Let $C_1,C_2$ denote the conjugacy classes of $\OO_{2n}^+(q)$ chosen in the
proof of Proposition~\ref{prop:O+}. Then the claim follows for $n\ge7$ (as
well as for $n=5$ provided that $q\ge5$). For $n=6$ let $C_1,C_2$ consist of
elements with invariant subspaces of types $5^-\oplus1^-$ respectively
$4^-\oplus2^-$. Then any pair of elements from $C_1\times C_2$ acts irreducibly,
and then by Corollary~\ref{cor:twozsig2} they generate $G$. \par
For $n=5$ let $C_1,C_2$ contain elements with invariant subspaces
of types $4^-\oplus1^-$ respectively $3^-\oplus2^-$. Then we conclude as
before unless possibly when $q=2$. No maximal subgroup of $\OO_{10}^+(2)$
has order divisible by~17 and~31. \par
So now assume that $n=4$. Let $C_1, C_2$ contain regular semisimple elements
with invariant subspaces of types $3^-\oplus1^-$ respectively $2^-\oplus2^-$.
Such classes exist whenever $q\ge4$. Let $H\le G$ contain elements from both
classes. Then clearly $H$ is irreducible on the natural module. Thus, by
\cite[Lemma~2.9]{GM10} either $H$ is contained in the normalizer of $\SU_4(q)$,
of $\PSU_3(q)$ or of $\Spin_7(q)$, or $q\in\{2,3,5\}$. The order of
$\PSU_3(q)$ is not divisible by a Zsigmondy prime divisor of $q^2+1$. Regular
semisimple elements of order $q^2+1$ in $\SU_4(q)$ and in $\Spin_7(q)$ have
centralizer order divisible by $q-1$, while elements in $C_2$ have centralizer
order dividing $(q^2+1)^2$. Thus, $H=G$ for $q\notin\{2,3,5\}$. For $q=5$ the
only additional subgroup of order divisible by $13=(q^2+1)/2$ is
$2.\tw2B_2(8)$, but its order is prime to~3. \par

So now assume that $q=3$. Suppose that $G$ is almost simple
with socle $S= \OO^+_8(3)$. Note that $S$ contains $3$ conjugacy
classes of elements of order $20$.  The Sylow $5$-subgroups
of each of these subgroups of order $20$ are not conjugate in $S$.
Thus, any subgroup of $S$ containing elements of order $20$
in more than one class must contain a Sylow $5$-subgroup of $S$.
Thus by \cite{Atl}, the only maximal subgroups containing such elements are
isomorphic to $M:=(\fA_6\times\fA_6).2^2$.   We claim that  $M$ has a unique
conjugacy class of elements of order $20$.  Note that in
$\fA_6 \times \fA_6$, there are $4$ conjugacy classes of elements
of order $20$.  The centralizer of any of them in $M$ is contained
in $\fA_6 \times \fA_6$, whence these classes are fused in $M$.
Note that $M$ embeds in $\fS_6 \wr 2$, whence all elements of order 
$20$ are contained in $\fA_6 \times \fA_6$.
Thus, there are no maximal subgroups of $S$ containing two different
conjugacy classes of elements of order $20$.

If  $|G:S|$ has order
prime to $3$, then there are at least two distinct $G$-classes of elements
of order $20$ in $S$.  The result follows in this case. 
If $3$ does divide $|G:S|$, then  all three classes of elements of order $20$
in $S$ are fused in $G$.   Let $C$ be the set of all elements of order 
$20$ in $S$.  Let $D$ be the $G$-conjugacy class of an outer automorphism
of order a power of $3$.   If $(c,d) \in C \times D$, then by the discussion
above, $S = \langle c , c^d \rangle$, whence the result. 
\par
Finally, let $G=\OO_8^+(2)$. It can be checked by a random computer search
that there are no $\Aut(G)$-invariant subsets $C,D\subset G$ such that any
pair from $C\times D$ generates. Now let $C,D$ be two distinct classes of
elements of order~15 in $G$. These are fused under the triality automorphism.
Note that the third powers of elements from $C$, $D$ are not conjugate either,
so that any subgroup containing elements $x\in C$ and $y\in D$ must have
order divisible by~25. According to the Atlas the only maximal subgroups
with this property are three classes of subgroups $(\fA_5\times\fA_5).2^2$,
(conjugate under triality) and each intersects a unique class of 15-elements
of $G$. Hence $\langle x,y\rangle$ does not lie in any proper subgroup. 
Argue as in the case of $q=3$ to complete the proof. 
\end{proof}

Theorem \ref{thm:strong gen classes} now follows by the propositions
in this section.

A straightforward reduction to the almost simple case now yields
Corollary~\ref{classes}.

%%%%%%%%%%%%%%%%%%%%%%%%%%%%%%%%%%%%%%%%%%%%%%%%%%%%%%%%%%%%%%%%%%%%%%%%%

\end{document}